\newcounter{my_enumerate_counter}
\newcommand{\pushcounter}{\setcounter{my_enumerate_counter}{\value{enumi}}}
\newcommand{\popcounter}{\setcounter{enumi}{\value{my_enumerate_counter}}}
\newcommand{\dminus}{\dot -}
\newcommand{\bbH}{[0,1)}
\newcommand{\bbT}{\mathbb T}
 \DeclareMathOperator{\Cb}{C_b}
 \DeclareMathOperator{\Qf}{Q}
 \DeclareMathOperator{\Aut}{Aut}
 \DeclareMathOperator{\Exh}{Exh}
 \DeclareMathOperator{\dom}{dom}
 \DeclareMathOperator{\dist}{dist}
 \DeclareMathOperator{\supp}{supp}
 \DeclareMathOperator{\len}{len}
 \DeclareMathOperator{\Fin}{Fin}
\newcommand{\cY}{\mathcal Y} 
\newcommand{\cS}{\mathcal S} 
\newcommand{\bft}{\mathbf t} 
\newcommand{\bfs}{\mathbf s}
\newcommand{\cP}{\mathcal P}
\newcommand{\cI}{\mathcal I}
\newcommand{\cL}{\mathcal L}
\newtheorem{thm}{Theorem}[section]
\newtheorem{theorem}{Theorem}
\newtheorem{prop}[thm]{Proposition}
\newtheorem{lemma}[thm]{Lemma}
\newtheorem{cor}[thm]{Corollary}
\newtheorem{coro}[theorem]{Corollary}
\newtheorem{conj}[thm]{Conjecture}
\newtheorem{claim}[thm]{Claim}
\theoremstyle{definition}
\newcommand{\rs}{\mathord{\upharpoonright}}
\newcommand{\bb}{{\bf b}}
\newcommand{\bbC}{{\mathbb C}}
\newcommand{\bbN}{{\mathbb N}}
\newcommand{\bbR}{{\mathbb R}}
\newcommand{\bbP}{{\mathbb P}}
\newcommand{\cA}{{\mathcal A}}
\newcommand{\calD}{{\mathcal D}}
\newcommand{\cF}{{\mathcal F}}
\newcommand{\cK}{{\mathcal K}}
\newcommand{\cU}{{\mathcal U}}
\newcommand{\cZ}{{\mathcal Z}}
\newcommand{\range}{{\rm range}}
\newcommand{\e}{\epsilon}
\DeclareMathOperator{\ThE}{Th_\exists}
\title{Rigidity of continuous quotients}
\author{Ilijas Farah}
\address{Department of Mathematics and Statistics, York University, 4700 Keele Street, North York, Ontario, Canada, M3J 1P3, and Matematicki Institut, Kneza Mihaila 35, Belgrade, Serbia}
\email{ifarah@mathstat.yorku.ca}
\urladdr{http://www.math.yorku.ca/~ifarah}
\author{Saharon Shelah}
\address{The Hebrew University of Jerusalem\\
Einstein Institute of Mathematics\\
Edmond J. Safra Campus, Givat Ram\\
Jerusalem 91904, Israel\\ and \\
Department of Mathematics\\
Hill Center-Busch Campus\\
Rutgers, The State University of New Jersey\\
110 Frelinghuysen Road\\
Piscataway, NJ 08854-8019 USA}
\email{shelah@math.huji.ac.il}
\urladdr{http://shelah.logic.at/}
\thanks{The first author was partially supported by NSERC}
\thanks{The second author  would like to thank the Israel Science Foundation, Grant 
no. 710/07, and the National Science Foundation, Grant no. DMS 1101597 for partial support of this research. No. 1042 on Shelah's list of publications.}
\subjclass{}
\keywords{Reduced products, countable saturation, asymptotic sequence algebra, Proper Forcing Axiom, Continuum Hypothesis}
\date{\today}
\begin{document}

\begin{abstract}   
We study countable saturation of metric  reduced products and 
introduce continuous fields of metric structures indexed by locally compact,  
separable, completely metrizable spaces. Saturation of the reduced product depends 
both on  the underlying index space and the model.  
By using the Gelfand--Naimark duality we conclude
that the assertion ``Stone--\v{C}ech remainder of the half-line has only trivial automorphisms'' 
is independent from ZFC.  
Consistency of this statement follows from the Proper Forcing Axiom
and this is the first known example  of a connected space with this property. 
\end{abstract} 

\maketitle


The present paper has two largely independent parts moving in two opposite directions. 
The first  part (\S\S\ref{S.Countable}--\ref{S.asymptotic})
 uses model theory of metric structures and it is concerned with 
the degree of saturation of various reduced products. The second 
part (\S\ref{S.W})  uses set-theoretic methods and it is 
mostly concerned with rigidity of Stone--\v{C}ech\footnote{Since in Czech alphabet letter `\v{c}' precedes letter `s'
 some authors write \v{C}ech--Stone compactification instead of Stone--\v{C}ech compactification.} 
remainders of locally compact, Polish spaces. 
(A topological space is \emph{Polish} if it is separable and completely metrizable.) 
The two parts are linked by the standard
fact that saturated structures have many automorphisms (the continuous case of this fact is
given in Theorem~\ref{T.Aut}).

By $\beta X$ we denote the Stone--\v{C}ech compactification of $X$ and 
by $X^*$ we denote its remainder (also called corona), $\beta X\setminus X$.  
A continuous map 
$\Phi\colon X^*\to Y^*$ is \emph{trivial} 
if there are a compact subset $K$ of $ X$ and  a continuous map $f\colon X\setminus K 
\to Y$ such that $\Phi=\beta f\rs X^*$, where 
 $\beta f\colon \beta X\to \beta Y$ is the unique continuous extension of $f$. 
Continuum Hypothesis (CH) implies that all Stone--\v{C}ech remainders of
locally compact, zero dimensional, non-compact Polish spaces are homeomorphic. 
This is a consequence of Parovi\v{c}enko's theorem, see e.g., \cite{vM:Introduction}.
By using Stone duality, this follows from the fact that all atomless Boolean algebras are elementarily equivalent and the countable saturation of corresponding reduced products.    
The latter also hinges on the fortuitous fact that the theory of atomless Boolean algebras admits  elimination of quantifiers.   
See also 
  \cite{DoHa:Universal} where  similar 
  model-theoretic methods were applied to the lattice of closed subsets of a Stone--\v{C}ech remainder.  

  Gelfand--Naimark duality (see e.g., \cite{Black:Operator}) 
associates autohomeomorphisms of a compact Hausdorff space $X$ to automorphisms
of the C*-algebra $C(X)$ of continuous complex-valued functions
on $X$. Logic of metric structures (\cite{BYBHU}), or rather its version adapted to 
C*-algebras (\cite{FaHaSh:Model2}) is applied to analyze these algebras. 
The idea of defining autohomeomorphism of a compact Hausdorff space $X$
indirectly via  an automorphism of $C(X)$ dates back at least to the  discussion 
in the introduction of~\cite{PhWe:Calkin}. 

The study of saturation properties of corona algebras was initiated in 
 \cite{FaHa:Countable} where it was shown that 
all coronas of separable C*-algebras satisfy a restricted form of saturation, the so-called countable degree-1 saturation. 
Although it is not clear whether degree-1 saturation suffices to construct many automorphisms, 
 even this restricted notion has 
interesting consequences (\cite[Theorem~1]{FaHa:Countable}, 
\cite[Theorem~8]{choi2013nonseparable}, \cite[\S 2]{Voi:Countable}). 
We also note that most coronas are not $\aleph_2$-saturated provably in ZFC. 
For $C( \omega^*)$\footnote{We use  the notation commonly accepted in set theory and 
  denote the least infinite ordinal (identified with the set of natural numbers including zero) by $\omega$.} this is a consequence of 
Hausdorff's construction of a gap in $\cP(\omega)/\Fin$, and for the Calkin 
algebra this was proved in \cite{Za-Av:Gaps}.

 It was previously known that 
CH  implies the existence of nontrivial autohomeomorphisms  
of the Stone--\v{C}ech remainder  of $\bbH$ (this is a result of Yu, 
see \cite[\S 9]{Ha:Cech-Stone}).

 Definitions of types and countable saturation are reviewed in \S\ref{S.Countable}.

\begin{theorem} \label{T.I.1} C*-algebra $C(\bbH^*)$ is countably saturated. 
\end{theorem}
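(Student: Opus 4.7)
The plan is to exhibit $C(\bbH^*)$ as an instance of the continuous reduced product framework developed in \S\ref{S.asymptotic}, so that the general saturation theorem proved there applies. By Gelfand--Naimark, $C(\bbH^*) \cong C_b(\bbH)/C_0(\bbH)$, where $C_0(\bbH)$ is the ideal of continuous functions vanishing at infinity. I would view $C_b(\bbH)$ as the algebra of bounded continuous sections of the trivial continuous field over $\bbH$ with fiber $\bbC$, and $C_0(\bbH)$ as the exhaustion ideal associated with the standard compact exhaustion $K_n = [0, 1-1/(n+1)] \nearrow \bbH$. The corona is then precisely the continuous reduced product in the sense of the earlier sections, with locally compact Polish index space.

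Given this identification, I would then verify countable saturation by the usual back-and-forth diagonal construction. Let $p(x)$ be a countable type over countably many parameters, realized by each of its finite subtypes. Lift every parameter to some representative in $C_b(\bbH)$ and enumerate $p$ as $\{\varphi_n(x) \leq r_n : n \in \omega\}$. For each $n$, finite satisfiability produces $f_n \in C_b(\bbH)$ with $\|f_n\| \leq 1$ such that the first $n$ conditions are met to within $2^{-n}$ modulo $C_0(\bbH)$. Using Tietze extension to alter $f_n$ on $K_n$ without changing its tail, one obtains witnesses $g_n$ which agree with $f_n$ outside $K_n$. The proposed realization is then $f = \sum_n (\chi_{n+1} - \chi_n) g_n$, where $(\chi_n)$ is a sequence of continuous $[0,1]$-valued cutoff functions adapted to the exhaustion, so that $f$ equals $g_n$ on a large part of $K_{n+1}\setminus K_n$ and interpolates continuously across the boundary strip.

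The principal obstacle is that, unlike in the discrete reduced product $\prod_n A_n/\bigoplus_n A_n$ governing $\omega^*$, one cannot freely switch the witness on each slice: continuity across the boundaries $\partial K_n$ must be maintained, and the transitions introduce an error term in each formula $\varphi_n$. I would control this by exploiting that the conditions $\varphi_n(x) \leq r_n$, being conditions in the corona, are insensitive to modifications on compact subsets of $\bbH$, together with the fact that the interpolation strips $K_{n+1}\setminus K_n$ can be pushed out toward infinity as $n$ grows, so that only finitely many conditions are affected at each stage. Combined with standard operator-norm estimates for the functional calculus of self-adjoint elements under multiplication by continuous cutoffs, this yields the required $\|\varphi_n(f) - \varphi_n(g_n)\| \to 0$ modulo $C_0(\bbH)$, and hence $[f]$ realizes $p$. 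In effect, the argument is the continuous-index analogue of the classical countable-saturation proof for $\pomf$, with the slicing structure of $\bbH$ replacing the discrete coordinates, exactly as the machinery of \S\ref{S.asymptotic} is designed to accommodate.
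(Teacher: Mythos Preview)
Your opening paragraph is exactly the paper's proof: identify $C(\bbH^*)\cong C_b(\bbH,\bbC)/C_0(\bbH,\bbC)$ and invoke the general saturation result for such quotients (this is Theorem~\ref{T2} in \S\ref{S.Continuous.Field}, not \S\ref{S.asymptotic}). With $K_n=[0,1-1/(n+1)]$ one has $|\partial K_n|=1$ for all $n$, and the unit ball of $\bbC$ is compact and locally connected, so the hypotheses of Theorem~\ref{T2} are met. If you stop there, you are done.

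The direct argument you sketch in the remaining two paragraphs, however, has a genuine gap. The difficulty is not the continuity across $\partial K_n$ but the following: if $g_n$ realizes the first $n$ conditions to within $2^{-n}$ \emph{in the corona}, this tells you only about $\limsup_{t\to 1}$ of the relevant quantities, and says nothing about the values of $g_n$ on the particular slice $K_{n+1}\setminus K_n$ where you intend to use it. For a condition such as $\|x\|=1$ you know $\limsup_t|g_n(t)|\approx 1$, but $|g_n|$ might be uniformly $\leq 1/2$ on $K_{n+1}\setminus K_n$; patching such $g_n$'s gives $\limsup_t|f(t)|\leq 1/2$ and the condition fails. Pushing the slices out does not help, since the set of $t$ where $|g_n(t)|$ is near $1$ need not meet any prescribed interval. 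For formulas with quantifiers the problem is worse: the value of $\inf_y\psi(x,y)$ in the corona involves global sections $y\in C_b(\bbH)$, and your cutoff estimate $\|\varphi_n(f)-\varphi_n(g_n)\|\to 0$ (which is not even well-typed as stated) has no evident meaning here.

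This is precisely why the paper's proof of Theorem~\ref{T2} does not proceed by naive diagonalization. It first reduces to prenex normal form, then discretizes via the finite ``$n$-patterns'' of \S\ref{S.Deconstructing}, uses Lemma~\ref{L2} to show that the corona value of a formula depends only on which patterns occur cofinally, and finally uses a Ramsey-type selection to choose both the witnesses $\bar a(m(k))$ and the cut points $l(k)$ so that on each slice the chosen witness actually exhibits the required set of patterns. The compactness of the fiber and the bound $\sup_n|\partial K_n|<\infty$ are used to match boundary values within $1/n$ so that the cutoff interpolation perturbs the patterns negligibly. Your sketch omits this selection step entirely, and without it the argument does not go through.
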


This is a consequence of Theorem~\ref{T2}, where sufficient conditions for a  quotient 
continuous field of models indexed by a Stone--\v{C}ech remainder of a locally compact Polish space
to be countably saturated are provided. See also Proposition~\ref{P.NS} where the necessity of 
some conditions of Theorem~\ref{T2} was shown. 
We also prove countable saturation of 
 reduced products of metric structures 
corresponding to the Fr\'echet ideal (Theorem~\ref{T.MT.1}) and 
so-called layered ideals (Theorem~\ref{T3}). More general metric reduced products are considered in 
\S\ref{S.Generalizations} where a model-theoretic interpretation of a result of \cite{JustKr} is given.

We note that Theorem~\ref{T.I.1} implies a strengthening of a result of Yu  (see \cite[\S 9]{Ha:Cech-Stone}).

\begin{coro} 
Continuum Hypothesis implies that $C(\bbH^*)$  has $2^{\aleph_1}$ automorphisms and  
that $\bbH^*$ has $2^{\aleph_1}$ autohomeomorphisms.  
In particular it  implies that $[0,1)^*$  has nontrivial automorphisms. 
\end{coro}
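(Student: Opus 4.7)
The plan is to deduce the corollary from Theorem~\ref{T.I.1} together with the general model-theoretic fact (Theorem~\ref{T.Aut} in the paper) that a saturated metric structure of density character $\aleph_1$ admits $2^{\aleph_1}$ automorphisms, and to transfer the conclusion to $\bbH^*$ via Gelfand--Naimark duality. Concretely, I would first note that $C(\bbH^*)$ is a quotient of $\Cb(\bbH)$ and hence has density character at most $\fc$; under CH this equals $\aleph_1$. Combined with Theorem~\ref{T.I.1}, countable saturation together with CH promotes $C(\bbH^*)$ to a saturated C*-algebra of density character $\aleph_1$, since over a separable language every type over a set of size $<\aleph_1$ uses only countably many parameters and is therefore already realized.

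Next I would invoke Theorem~\ref{T.Aut} to produce $2^{\aleph_1}$ automorphisms of $C(\bbH^*)$. The standard back-and-forth construction underlying that theorem builds a binary tree of partial isomorphisms of height $\aleph_1$, each branch giving a distinct automorphism; this is where saturation (as opposed to mere countable saturation) is essential, since at every stage the type to be realized may involve up to $\aleph_1$ parameters already enumerated. By Gelfand--Naimark duality, automorphisms of $C(\bbH^*)$ correspond bijectively to autohomeomorphisms of $\bbH^*$, so $\bbH^*$ also has $2^{\aleph_1}$ autohomeomorphisms.

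Finally, to extract nontriviality, I would count the trivial maps. A trivial autohomeomorphism of $\bbH^*$ is induced by a continuous map $f\colon \bbH\setminus K\to\bbH$ for some compact $K\subseteq\bbH$; the space of such maps has cardinality at most $\fc=\aleph_1$ under CH (a continuous real-valued function on a separable space is determined by its values on a countable dense set). Since $2^{\aleph_1}>\aleph_1$, the vast majority of the autohomeomorphisms produced above must be nontrivial, and in particular $\bbH^*$ admits at least one nontrivial autohomeomorphism.

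The only nontrivial step is the passage from countable saturation to full $\aleph_1$-saturation under CH and the subsequent back-and-forth yielding $2^{\aleph_1}$ automorphisms; both are standard in the metric model-theoretic setting once Theorem~\ref{T.I.1} and Theorem~\ref{T.Aut} are available, so the corollary is essentially a direct consequence.
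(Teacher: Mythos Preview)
Your proposal is correct and follows essentially the same route as the paper: Theorem~\ref{T.I.1} plus CH gives saturation of $C(\bbH^*)$ at density character $\aleph_1$, Theorem~\ref{T.Aut} yields $2^{\aleph_1}$ automorphisms, Gelfand--Naimark transfers this to $\bbH^*$, and a counting of continuous self-maps of $\bbH$ shows most are nontrivial. The paper's proof is terser but makes exactly the same moves; your added detail (why density character is $\leq\fc$, why countable saturation suffices under CH) is fine and the final cardinality comparison $2^{\aleph_1}>\aleph_1=\fc$ matches the paper's $2^{\aleph_0}<2^{\aleph_1}$.
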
 

\begin{proof} 
By Theorem~\ref{T.I.1} and CH $C(\bbH^*)$ is saturated. 
By Theorem~\ref{T.Aut} it has $2^{\aleph_1}$ automorphisms. 
Gelfand--Naimark duality implies that $\bbH^*$ has $2^{\aleph_1}$ automorphisms. 
Finally, CH  implies that $2^{\aleph_0}<2^{\aleph_1}$ and 
there are only $2^{\aleph_0}$ continuous functions from $[0,1)$ to itself. 
\end{proof}

Let us now consider the situation in which quotient structures are maximally rigid. 
The first result in this direction was the second author's result that consistently with ZFC 
all autohomeomorphisms of $\omega^*$ are trivial (\cite[\S IV]{Sh:Proper}). 
PFA implies that all homeomorphisms between Stone--\v{C}ech remainders of locally compact  Polish spaces that are in addition 
countable or zero-dimensional are trivial  
(\cite[\S 4.1]{Fa:AQ}, 
\cite[Theorem 4.10.1]{Fa:AQ}, and \cite{FaMcK:Homeomorphisms}). 
The effect of the Proper Forcing Axiom (PFA) to quotient structures extends to the non-com\-mu\-ta\-tive context; 
see the discussion at the beginning of \S\ref{S.Continuous.Field}
as well as  
 \cite{Fa:All},  \cite{McK:Reduced}  and \cite{Gha:FDD}. 
 All of these results, as well as the following theorem, appear to be instances of a hitherto unknown general result (see  
 \cite{Fa:Rigidity}). 
 
\begin{theorem}[PFA] \label{T0} Every autohomeomorphism  of $\bbH^*$ is trivial. 
\end{theorem}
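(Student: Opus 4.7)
The plan is to invoke Gelfand--Naimark duality and reduce the question to the triviality of the dual $C^*$-algebra automorphism. An autohomeomorphism $\Phi\colon\bbH^*\to\bbH^*$ corresponds to an automorphism $\alpha$ of $C(\bbH^*)\cong C_b(\bbH)/C_0(\bbH)$, and $\Phi$ being trivial amounts to $\alpha$ being induced on some cofinal subalgebra by pullback along a continuous $f\colon\bbH\setminus K\to\bbH$. The entire argument is therefore carried out on the $C^*$-algebra side.

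The first step is to produce, using PFA, a Borel (or even continuous) linear lifting $F\colon C_b(\bbH)\to C_b(\bbH)$ descending to $\alpha$ modulo $C_0(\bbH)$. The method is by now standard for rigidity theorems of this type: apply the Open Coloring Axiom (a consequence of PFA) to the natural coloring whose edges are pairs of ``essentially distinct'' approximate liftings of $\alpha$ restricted to countable sets, using the countable saturation of $C(\bbH^*)$ from Theorem~\ref{T.I.1} to guarantee that the coloring is well behaved. This parallels \cite[\S 4.1]{Fa:AQ} and \cite{FaMcK:Homeomorphisms} in the zero-dimensional case, and \cite{Fa:All}, \cite{McK:Reduced}, \cite{Gha:FDD} in the noncommutative case.

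The second step is to upgrade the linear lifting to a $*$-homomorphism on a cofinal subalgebra. Here MA (also a consequence of PFA) is used together with the layered structure provided by the exhaustion $\bbH=\bigcup_n[0,1-1/n]$ and the associated cofinal tower of closed ideals in $C_0(\bbH)$: one shows that ``multiplicative modulo $C_0(\bbH)$'' upgrades to genuinely multiplicative along this tower on a cofinal $C^*$-subalgebra. Applying Gelfand--Naimark to the resulting $*$-homomorphism then yields the desired continuous map $f$, whence $\Phi=\beta f\rs\bbH^*$.

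The main obstacle is that $\bbH$ is connected and so $C(\bbH^*)$ has no nontrivial projections; the usual reduction to an isomorphism of Boolean algebras (as in the zero-dimensional case) is therefore unavailable. The argument must operate directly on continuous real-valued functions, and this is where the full strength of Theorem~\ref{T.I.1} is used: countable saturation provides enough realized types to force any well-behaved lifting to be induced by a genuine continuous geometric map rather than a purely algebraic transformation. Verifying that the OCA argument carries through in this connected setting---that is, that the relevant coloring remains open when phrased purely in terms of real-valued functions, with no projection structure to exploit---is the technical heart of the proof.
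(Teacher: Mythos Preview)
Your proposal is a strategic outline rather than a proof, and it contains both a conceptual misplacement and an unfilled gap.

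First, the invocation of countable saturation (Theorem~\ref{T.I.1}) is inverted. Saturation is what produces $2^{\aleph_1}$ automorphisms under CH; it plays no role whatsoever in the PFA rigidity direction, and the paper does not use it there. The claim that saturation ``guarantees that the coloring is well behaved'' or ``forces any well-behaved lifting to be induced by a genuine continuous geometric map'' does not correspond to any mechanism I can recognize: saturation gives you more realizations of types, hence more flexibility, which works against rigidity rather than for it.

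Second, you correctly identify the obstacle---no projections, hence no Boolean-algebra reduction---but you do not overcome it. The lifting machinery of \cite{Fa:All}, \cite{McK:Reduced}, \cite{Gha:FDD} leans heavily on projections (or real rank zero) both to formulate the OCA coloring and to localize the lifting along an approximate unit of projections. Saying that ``verifying that the OCA argument carries through in this connected setting \dots\ is the technical heart'' names the difficulty without supplying the argument. As written, there is no indication of what the open coloring actually is, or why multiplicativity can be recovered without projections to cut against.

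The paper takes a completely different, and entirely topological, route. It first proves in ZFC (Lemma~\ref{L-1}, via Lemma~\ref{L4}) that every autohomeomorphism of $\bbH^*$ has a \emph{representation}: a map on closed subsets of $\bbH$ lifting the action of $\Phi$ on sets of the form $a^*$. The key observation is that sets of the form $a^*$ for nontrivial closed $a\subseteq\bbH$ are characterized intrinsically (their complement, and the complement of every nonempty relatively open subset, is disconnected), so this class is preserved by any autohomeomorphism. Then Theorem~\ref{T1} shows, under PFA, that any homeomorphism between remainders of locally compact Polish spaces that has a representation is trivial. That proof restricts attention to closed \emph{discrete} subsets $a\in\calD(X)$, where $a^*\cong\omega^*$ and the existing zero-dimensional PFA results (\cite{DoHa:Images}, \cite{ShSte:PFA}, \cite{Ve:OCA}) apply to give local trivializing maps $h_a\colon a\to Y$; an OCA-type argument from \cite{Fa:Cauchy} is then used to patch the $h_a$ coherently over a countable dense set, and a continuity argument extends the result to all of $X$. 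Thus the connectedness obstacle is bypassed by reducing to discrete subsets rather than by attempting a C*-algebraic lifting without projections.
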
 

We  prove a more general result, Theorem~\ref{T1}, as   a step towards proving that 
all Stone--\v{C}ech remainders of locally compact Polish spaces have only `trivial' automorphisms
assuming PFA.  
An inspection of its proof shows that it uses only consequences of PFA whose consistency does not require large cardinal axioms. 

The proof of Theorem~\ref{T0} introduces a novel technique. 
In all previously known cases  rigidity of the remainder $X^*$ was proved 
 by  representing  it as an  inverse limit of spaces homeomorphic to~$\omega^*$ (see e.g., 
 \cite[\S 4]{Fa:AQ}).  This essentially applies even to the non-commutative case, where
 the algebras were always presented as direct limits of algebras with an abundance of projections. 
 This approach clearly  
 works only in the case when $X$ is  zero-dimensional (or, in the noncommutative case, 
 when the C*-algebra has real rank zero) and our proof of Theorem~~\ref{T0} necessarily takes a different route.

\subsection*{Organization of the paper} 
In \S\ref{S.Countable} we review conditions, types, and 
 saturation of metric structures. In \S\ref{S.Reduced}, ,
it is proved that reduced product with respect to the Fr\'echet ideal is always countably saturated. 
Proof of Theorem~\ref{T.MT.1} proceeds via discretization of ranges of metric formulas given in \S\ref{S.Deconstructing}
and is completed in \S\ref{S.T.MT.1}.
Models $C_b(X,A)$ and $C_0(X,A)$ are introduced in 
  \S\ref{S.Continuous.Field} and countable saturation of the corresponding quotients  under 
  additional assumptions is 
  proved in Theorem~\ref{T2}, whose 
  immediate consequence is Theorem~\ref{T.I.1}. Proposition~\ref{P.NS} 
  provides some limiting examples. 
 \S\ref{S.Rep} is independent from the rest of the paper. In it  we prove
 Theorem~\ref{T0} by using set-theoretic methods. 
 We conclude with some brief remarks in \S\ref{S.CR}.  
 
\subsection*{Notation} 
If $a\subseteq \dom(h)$ we  write $h[a]$ for the pointwise image of~$a$. 
An element $a$ of a product $\prod_n A_n$ is always identified with the sequence 
$(a_n: n\in \omega)$; in particular indices in subscripts are usually used for this purpose. 
For a set $A$ we denote its cardinality by $|A|$. 
In some of the literature (e.g., \cite{DoHa:Universal} or \cite{Ha:Cech-Stone})
the half-line is denoted by ${\mathbb H}$. Since the same symbol is elsewhere used to denote the 
 half-plane, we avoid using it. In our  results about  Stone--\v{C}ech remainders $[0,1)$ can 
 be everywhere replaced with ${\mathbb H}$.  
 A subset $D$ of a  metric space is  \emph{$\e$-discrete} if $d(a,b)\geq \e$ for all distinct $a$ 
 and $b$ in $D$. 
We also follow \cite{BYBHU}  and write $x\dminus y$ for $\max(x-y,0)$. 

\subsection*{Acknowledgments} 
The proof of Theorem~\ref{T.MT.1} was inspired by   conversations 
of the first author with Bradd Hart over the past several years. 
Also, the included proof of Theorem~\ref{T.Aut} was communicated 
to the first author and David Sherman in an email from Bradd Hart in June 2010. 
We would like to thank Bradd for his kind permission to include this proof. 
The first author would also like to thank Bruce Blackadar for a useful remark 
on coronas and  N. Christopher Phillips for several remarks on an early draft of this paper. 
We would also like to thank  Alessandro Vignati
and the anonymous referee for making a number of very helpful suggestions. 
After this paper was completed Isaac Goldbring pointed out that 
reduced products of metric structures were also  studied in \cite{lopes2013reduced}.


\section{Countable saturation} 
\label{S.Countable}
\subsection{Conditions, types and saturation} 
A quick review of the necessary model-theoretic background is in order; 
see \cite{BYBHU} and  \cite{Ha:Continuous}
for more details.  Our motivation comes from  study of saturation properties
of C*-algebras (\cite{FaHaSh:Model2}, \cite{FaHa:Countable}), 
but we prove novel results for  general metric structures. 
 Fix  language $\cL$ in the logic of metric structures  whose variables are 
 listed as  $\{x_n: n\in\omega\}$. 
In the ensuing discussion we shall write $\bar x, \bar y, \bar a, \dots$ to denote  tuples
of unspecified length and sort. In most interesting cases all entries of the tuple will belong to a single sort, such as the unit ball of the C*-algebra under the consideration, and we shall suppress discussion 
of sorts by assuming all variables are of the same sort.

For a metric formula $\phi(\bar x)$,  metric structure $A$ of the same signature  and  tuple $\bar a$
in $A$ of the appropriate sort, by $\phi(\bar a)^A$ we denote the interpretation (i.e., evaluation) of $\phi(\bar x)$ at $\bar a$ in structure $A$. 
A (closed) 
\emph{condition} is an expression of the form $\phi(\bar x)=r$ for a formula $\phi(\bar x)$ and a real number $r$. We consider conditions over a model $A$, in which case $\phi$ is allowed to have 
elements from $A$ as parameters. Formally, we expand  the language by adding constants  for 
these elements; for details see \cite{BYBHU} or \cite[\S 2.4.1]{FaHaSh:Model2}. 
An \emph{$n$-type}  is a set of conditions all of whose  
free variables are included in the set $\{x_0,\dots, x_{n-1}\}$.
 We shall suppress $n$ throughout and write $x$ instead of $x_0$ if $n=1$. 
In general, a \emph{type} over a model $A$ is a set of conditions with parameters from $A$. 
 An $n$-type $\bft(\bar x)$ 
is \emph{realized} in $A$ if some $n$-tuple $\bar a$ in $A$ we have that $\phi(\bar a)^A=r$ for all conditions 
$\phi(\bar x)=r$ in $\bft(\bar x)$. 
A type is  \emph{consistent} (or \emph{finitely approximately realizable} in the terminology of \cite{FaHa:Countable})
 if every one of its finite subsets can be realized up to an arbitrarily small $\e>0$. 

If $\kappa$ is an infinite cardinal, we say that 
a model $A$ is \emph{$\kappa$-saturated} if every consistent type $\bft$ over $A$  with fewer than $\kappa$ conditions 
is realized in $A$. If the density character of $A$ is $\kappa$ then $A$ 
is \emph{saturated}.  
Instead of \emph{$\aleph_1$-saturated} we shall usually say  \emph{countably saturated}.
Saturated models have remarkable properties. Every saturated model of density character 
$\kappa$  has $2^\kappa$ 
automorphisms, and two saturated models of the same language and same 
character density are isomorphic if and only if they have the same theory
(see any standard text on model theory, e.g., 
\cite{ChaKe},  \cite{Hodg:Model} or \cite{Mark:Model}). 

Following \cite{FaHa:Countable} one may consider restricted versions of saturation. 
If all  consistent quantifier-free types of cardinality $<\kappa$ over a model are realized in it, 
 the model is said to be \emph{quantifier-free $\kappa$-saturated}. 
In case of C*-algebras a weaker notion of degree-1 countable saturation was considered in 
\cite{FaHa:Countable}. C*-algebra $C$ is \emph{countably degree-1 saturated} 
if every type consisting of conditions of the form $\|p\|=r$, where $p$ is a sum of monomials of the form  
 $a$, $axb$ or $ax^*b$ for $a,b$ in $C$ and variable $x$
if and only if it is realizable in $C$. 
All coronas of separable C*-algebras  have this property, 
and it is strong enough to imply many of the known properties of such coronas
(\cite{FaHa:Countable}; see also \cite{EaVi:Degrees}).  

However,  the existence  of saturated models of unstable theories requires nontrivial 
assumptions on cardinal arithmetic, such as the Continuum Hypothesis (see \cite[\S 6]{ChaKe}). 
Nevertheless, in a situation where  focus is on  separable objects, 
countable saturation is sufficient. The fact that the ultrapowers as well 
as the relative commutants of separable subalgebras in ultrapowers are countably saturated 
(see e.g., \cite{FaHaSh:Model2}) is 
largely responsible for their usefulness in the study of separable C*-algebras. 
Saturation of all ultrapowers  of a separable model of an unstable theory 
associated to a nonprincipal ultrafilter on $\omega$  
is equivalent to the Continuum Hypothesis (\cite{FaHaSh:Model1};  see
also  \cite{FaSh:Dichotomy} for a quantitative strenghtening).

Metric formula $\phi(\bar y)$ is in  \emph{prenex normal form} 
if it is of the following form 
for some  $n$ and $k$  
($\bar x$ stands for  $(x(0), \dots, x(2n-1))$)
\[
\Qf_{x(0)} \Qf_{x(1)} \dots \Qf_{x(2n-2)} \Qf_{x(2n-1)} 
f(\alpha_0(\bar x, \bar y), \dots \alpha_{k-1}(\bar x, \bar y))
\]
where each $\Qf$ stands for $\sup$ or $\inf$,  
$f$ is a continuous function and $\alpha_i$ for $i<k$ are atomic formulas.

\begin{lemma}\label{L.PNF} Every type  is equivalent to a type  
such that 
all  formulas occurring in its conditions  are in prenex normal form. 
\end{lemma}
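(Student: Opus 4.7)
The plan is to proceed by structural induction on the formula $\phi$ in each condition $\phi(\bar x) = r$ of the type, replacing the condition by an equivalent (countable) family of PNF conditions. Atomic formulas are already PNF with empty quantifier block, and the case $\phi = Qy\,\psi$ reduces at once to the PNF form of $\psi$ by prepending $Qy$. The substantive case is $\phi = f(\phi_1,\dots,\phi_k)$ for a continuous connective $f$ and PNF subformulas $\phi_i = Q_{i,1} y_{i,1}\cdots Q_{i,m_i} y_{i,m_i}\,\theta_i(\bar y_i, \bar x)$; after renaming bound variables so the tuples $\bar y_i$ are pairwise disjoint and disjoint from $\bar x$, the task is to pull all the $Q_{i,j}$ to the front of $f$.

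The workhorse identity is that a monotone continuous post-composition passes through $\sup$ and $\inf$: if $g$ is non-decreasing (resp.\ non-increasing) in a given argument, then $g(\ldots,\sup_y \theta,\ldots) = \sup_y g(\ldots,\theta,\ldots)$ (resp.\ $=\inf_y g(\ldots,\theta,\ldots)$), and dually for $\inf$. Iterating this identity handles the case in which $f$ is separately monotone in each of its $k$ arguments, producing the single PNF formula
\[
Q'_{1,1} y_{1,1}\cdots Q'_{k,m_k} y_{k,m_k}\, f(\theta_1(\bar y_1,\bar x),\ldots,\theta_k(\bar y_k,\bar x)),
\]
where each $Q'_{i,j}$ equals $Q_{i,j}$ or its flip, according to the direction of monotonicity of $f$ in the $i$-th coordinate.

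For a general (non-monotone) continuous $f$, I would approximate $f$ uniformly on the compact range $[0,M]^k$ of $(\phi_1,\dots,\phi_k)$ by piecewise affine functions $f_n$. Since any piecewise affine $f_n$ is an iterated $\max$--$\min$ combination of affine functions, and both affine maps and $\max,\min$ are monotone in each argument, the monotone case above yields a PNF formula $\psi_n$ for $f_n(\phi_1,\dots,\phi_k)$, with $\|\psi_n - \phi\|_\infty \to 0$ uniformly across all structures. Replace the single condition $\phi = r$ by the countable PNF family
\[
\max(\psi_n - r - 1/n,\,0) = 0, \qquad \max(r - \psi_n - 1/n,\,0) = 0, \qquad n \ge 1,
\]
noting that each of these stays in PNF because $\max(\cdot-c,0)$ and $\max(c-\cdot,0)$ are monotone and hence pass through the outer quantifier block of $\psi_n$ (with quantifier flipping in the non-increasing case), merging into the innermost continuous function. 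Uniform convergence $\psi_n \to \phi$ gives that a tuple $\bar a$ realizes $\phi(\bar x) = r$ in $A$ if and only if $|\psi_n^A(\bar a) - r| \le 1/n$ for every $n$, so the replacement family is equivalent to the original condition; applying this to every condition of the type completes the argument.

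The main obstacle is the uniform piecewise-affine approximation of a general continuous connective, i.e., verifying that PNF formulas are dense in all metric formulas. Once this standard density fact is in hand, the remaining quantifier manipulations are routine bookkeeping with the monotone identity and the fact that each original condition is replaced by only countably many PNF conditions (which is what is needed for downstream applications to countable saturation).
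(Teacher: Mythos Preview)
Your argument is correct and follows essentially the same route as the paper's proof: both approximate an arbitrary formula $\phi$ uniformly by PNF formulas $\psi_n$ and then replace the single condition $\phi(\bar x)=r$ by the countable family of PNF conditions expressing $|\psi_n(\bar x)-r|\le 1/n$, using monotonicity of the outer connective to keep these conditions in PNF. The only difference is cosmetic: the paper quotes the density of PNF formulas from \cite[Proposition~6.9]{BYBHU}, whereas you sketch a direct proof of it via piecewise-affine approximation and the monotone-commuting identity; the paper uses $\dminus$ and $|\cdot|$ for the outer connective while you use $\max(\,\cdot\,,0)$, which amounts to the same thing.
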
 

\begin{proof} This is an easy consequence of \cite[Proposition~6.9]{BYBHU}, 
which states that every formula can be uniformly approximated by formulas in the 
prenex normal form and its proof. As pointed out in this proof, 
 connectives $\dminus$ and $|\cdot |$ 
are monotonic in all of their arguments and therefore standard proof that a 
(discrete) formula is equivalent to one in prenex normal form 
applies to show that if $\phi$ and $\psi$ are in prenex normal 
form then $\phi\dminus \psi$ and $|\psi|$ are equivalent to formulas in 
prenex normal form. 

Fix a 
 condition $\phi(\bar x)=r$.  
For every $n$ fix a formula $\phi_n$ in  prenex normal form such that 
\[
\sup_{\bar x}|\phi_n(\bar x)-\phi(\bar x)|\leq 1/n. 
\]
The type consisting of conditions 
\[
|\psi(\bar x)\dminus r|\dminus 1/n=0
\]
for 
$n\geq 1 $
is equivalent to condition $\phi(\bar x)=r$ and by the above formula $|\psi(\bar x)\dminus r|\dminus 1/n$ is equivalent to a formula in prenex normal form. 
\end{proof}

\subsection{Reduced products over the Fr\'echet ideal}  \label{S.Reduced} 
Fix a language $\cL$ in the logic of metric structures 
  with a distinguished constant symbol~$0$. 
 Assume $A_n$, for $n\in \omega$ are $\cL$-structures. 
 We form two $\cL$-structures as follows (see also \cite[\S 2]{lopes2013reduced} for more details) 
 \begin{align*} 
\textstyle \prod_n A_n=&\{\bar x: (\forall n) x_n\in A_n
\text{ and all $x_n$ belong to the same domain}\},\\
 \textstyle\bigoplus_n A_n=&\textstyle\{\bar x\in \prod_n A_n: (\forall n) x_n\in A_n\text{ and } 
  \limsup_n d_n(x_n,0^{A_n})=0\}. 
 \end{align*} 
  We shall write 
  \[
\textstyle  A_\infty:=\prod_i A_i. 
\]
   Both structures  are considered with respect to the sup metric, in which they are both complete. 
Interpretations of function symbols are defined in the natural way, pointwise. For a tuple 
$\bar a=(\bar a_i: i\in \omega)$ in one of these
models and a function symbol $f$ of the appropriate sort we have
\[
f(\bar a):=(f(\bar a_n):n\in \omega).  
\]
The interpretation of relational symbols is admittedly less canonical. 
If $\bar a$ is a tuple in one of the above models and $R$ is a predicate symbol of the appropriate sort, 
then we define
\[
R(\bar a):=\sup_n  R(\bar a_n). 
\]
This  agrees with our choice of the sup metric on the product space. 
It is also compatible with the convention adopted in the discrete logic, where $R(\bar a)$ is true if
and only if $R(\bar a_n)$ is true for all $n$.

Consider the quotient structure
\[
\cA=\textstyle\prod_n A_n/\bigoplus_n A_n
\]
(also denoted by $\prod_{\Fin} A_n$)
 with the  quotient map 
\[
\textstyle q\colon \prod_n A_n\to \prod_n A_n/\bigoplus_n A_n.
\]  
We shall use same notation for the natural extension of $q$ to  $k$-tuples
 for $k\geq 1$.
The interpretation of an  $\cL$-sort in $\cA$ is the $q$-image of the interpretation of this sort in $\prod_n A_n$.  
On this quotient structure define metric $d$ via
\[
d(q(\bar a), q(\bar b))=\limsup_n d(a_n,b_n). 
\]
For a predicate symbol $R$ and $\bar a$ such that $q(\bar a)$ is of the appropriate sort define
\[
R(q(\bar a))=\limsup_n R(a_n), 
\]
and for a function symbol $f$ and $\bar a$ such that $q(\bar a)$ is of the appropriate sort define
\[
f(q(\bar a))=q(f(\bar a)). 
\]

\begin{claim} Interpretations of function and predicate symbols are well-defined and 
have the correct moduli of uniform continuity. 
\end{claim}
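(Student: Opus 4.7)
The plan is to derive both assertions from two simple facts: (i) $q(\bar a)=q(\bar b)$ iff $\limsup_n d(\bar a_n,\bar b_n)=0$, which is immediate from the definition of $\bigoplus_n A_n$; and (ii) every function or predicate symbol of $\cL$ comes equipped with a modulus of uniform continuity $\delta$ which holds coordinatewise in each $A_n$.

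First I would check that the metric $d$ on $\cA$ is well-defined on equivalence classes and satisfies the metric axioms. If $q(\bar a)=q(\bar a')$ and $q(\bar b)=q(\bar b')$, the pointwise triangle inequality
\[
d(\bar a_n,\bar b_n)\leq d(\bar a_n,\bar a'_n)+d(\bar a'_n,\bar b'_n)+d(\bar b'_n,\bar b_n)
\]
combined with (i) gives $\limsup_n d(\bar a_n,\bar b_n)\leq \limsup_n d(\bar a'_n,\bar b'_n)$, and symmetry yields equality. The metric axioms then transfer coordinatewise from each $A_n$ to $\cA$.

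Next, for a predicate symbol $R$ with modulus $\delta_R$, I would argue that if $q(\bar a)=q(\bar b)$ then $\limsup_n R(\bar a_n)=\limsup_n R(\bar b_n)$: given $\e>0$, by (i) we have $d(\bar a_n,\bar b_n)<\delta_R(\e)$ for all but finitely many $n$, so $|R(\bar a_n)-R(\bar b_n)|\leq \e$ for all but finitely many $n$, and the two $\limsup$s differ by at most $\e$. Letting $\e\to 0$ proves independence of the representative. The parallel calculation for a function symbol $f$ with modulus $\delta_f$ shows $\limsup_n d(f(\bar a_n),f(\bar b_n))\leq \e$ for every $\e>0$, whence $q(f(\bar a))=q(f(\bar b))$.

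Finally, to see that the same $\delta_R$ and $\delta_f$ serve as moduli on $\cA$: assume $d(q(\bar a),q(\bar b))<\delta_R(\e)$, i.e.\ $\limsup_n d(\bar a_n,\bar b_n)<\delta_R(\e)$. Redefining $\bar b_n:=\bar a_n$ on the finite initial segment where $d(\bar a_n,\bar b_n)\geq \delta_R(\e)$ produces a representative $\bar b'$ with $q(\bar b')=q(\bar b)$ (a finite modification lies in $\bigoplus_n A_n$) and $d(\bar a_n,\bar b'_n)<\delta_R(\e)$ for every $n$; hence $|R(\bar a_n)-R(\bar b'_n)|\leq \e$ for every $n$, and passing to $\limsup$ gives $|R^{\cA}(q(\bar a))-R^{\cA}(q(\bar b))|\leq \e$. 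The argument for $f^{\cA}$ is identical. The only mildly fiddly point is this passage from a $\limsup$ bound to a uniform coordinatewise bound; the finite representative adjustment handles it cleanly, although one could equally well replace $\e$ by $\e+\eta$ and let $\eta\to 0$, exploiting continuity of the modulus.
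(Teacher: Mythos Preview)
Your argument is correct and follows essentially the same route as the paper: the paper fixes $\e>0$, uses the common modulus $\Delta(\e)$ coordinatewise, observes that the set $Z=\{n:d(a_n,b_n)\geq\Delta(\e)\}$ is finite whenever $d(q(\bar a),q(\bar b))<\Delta(\e)$, and concludes that the $\limsup$ is controlled by~$\e$. Your finite-initial-segment redefinition is just a cosmetic repackaging of this same finiteness-of-$Z$ observation; the paper's version is terser (it treats only the function-symbol case and declares the others similar), while yours is more explicit in separating well-definedness from the modulus check.
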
 

\begin{proof} Since all proofs are similar, we shall prove the claim only for function symbol $f$. 
Fix $\e>0$ and $\Delta(\e)>0$ such that each $A_n$ satisfies 
\[
d(x,y)<\Delta(\e) \Rightarrow 
d(f(x), f(y))\leq \e. 
\]
Fix $\bar a$ and $\bar b$ in $\prod_n A_n$ such that $d(q(\bar a), q(\bar b))<\Delta (\e)$. 
Then the set  
\[
Z=\{n : d(a_n,  b_n)\geq \Delta(\e)\}
\]
is finite and for all $n\in \omega\setminus Z$ we have 
$d(f(a_n), f(b_n))\leq \e$. 
Therefore $d(f(q(\bar a), q(\bar b))\leq \e$. 
\end{proof}

We record two immediate consequences of definitions in order
to furnish the intuition (see also \cite[\S 2]{lopes2013reduced}). 

\begin{lemma} \label{L.Atomic} 
If $\alpha(\bar x)$ is an atomic formula and $\bar a\in \prod_n A_n$ is of the appropriate sort, then 
$\alpha(\bar a)^{\prod_n A_n}=\sup_n \alpha(\bar a_n)^{A_n}$. \qed
\end{lemma}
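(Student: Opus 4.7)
The plan is to prove the lemma by induction on the complexity of terms, and then to handle the two possible shapes of an atomic formula directly from the definitions given just above the lemma.

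First I would establish the term-level claim: for every $\cL$-term $t(\bar x)$ and every tuple $\bar a \in \prod_n A_n$ of the appropriate sort, one has
\[
t(\bar a)^{\prod_n A_n} = \bigl( t(\bar a_n)^{A_n} : n \in \omega \bigr),
\]
i.e.\ terms are interpreted coordinatewise. The base case handles variables and constant symbols (here the distinguished constant $0$, together with any further constants in $\cL$), both of which are interpreted coordinatewise by definition of $\prod_n A_n$. The induction step uses the clause $f(\bar a) := (f(\bar a_n) : n \in \omega)$ from the definition of the product structure.

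Next, recall that an atomic formula $\alpha(\bar x)$ is either of the form $d(t_1(\bar x), t_2(\bar x))$ for terms $t_1,t_2$, or of the form $R(t_1(\bar x), \dots, t_k(\bar x))$ for a predicate symbol $R$ and terms $t_1, \dots, t_k$. In the first case, using the term claim and the fact that $\prod_n A_n$ carries the sup metric, we get
\[
d(t_1(\bar a), t_2(\bar a))^{\prod_n A_n} = \sup_n d\bigl(t_1(\bar a_n)^{A_n}, t_2(\bar a_n)^{A_n}\bigr) = \sup_n d(t_1(\bar a_n), t_2(\bar a_n))^{A_n}.
\]
In the second case, combining the term claim with the stipulated interpretation $R(\bar b) := \sup_n R(\bar b_n)$ of predicate symbols in the product yields the same conclusion:
\[
R(t_1(\bar a), \dots, t_k(\bar a))^{\prod_n A_n} = \sup_n R\bigl(t_1(\bar a_n), \dots, t_k(\bar a_n)\bigr)^{A_n}.
\]
Together these two cases give the statement of the lemma.

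There is no real obstacle here; the content of the lemma is essentially a bookkeeping verification that the sup interpretation of predicates and the coordinatewise interpretation of functions commute with term formation. The only thing worth being careful about is the sort-tracking in the induction on terms, to ensure that the intermediate values $t(\bar a_n)^{A_n}$ at each coordinate genuinely live in the sort at which the outer function or predicate symbol is being applied; this is guaranteed by the hypothesis that $\bar a$ is of the appropriate sort in $\prod_n A_n$, which forces all $\bar a_n$ to be of that same sort in $A_n$.
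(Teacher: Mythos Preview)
Your proof is correct and is exactly the routine verification the paper has in mind; the paper itself records the lemma as an ``immediate consequence of definitions'' and gives no argument beyond the \qed\ symbol. Your explicit induction on terms followed by the two-case split on the shape of atomic formulas is the natural way to unpack that phrase.
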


\begin{lemma}\label{L.Product} 
For a tuple $\bar a$ in $A_\infty$ and a formula $\phi(\bar x)$ of the appropriate sort 
we have 
\[
\phi(q(\bar a))=\lim_m \lim_n \tilde\phi(\bar a)^{A_{[m,n)}}. 
\]
 (with 
$A_{[m,n)}$ denoting $ \prod_{i=m}^{n-1} A_i$). \qed
\end{lemma}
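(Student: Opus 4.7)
The plan is to proceed by induction on the complexity of $\phi$, invoking Lemma~\ref{L.PNF} to reduce to formulas in prenex normal form. Writing $f_\phi(m,n) := \tilde\phi(\bar a)^{A_{[m,n)}}$, the goal at each stage is to show both that the double limit $\lim_m \lim_n f_\phi(m,n)$ exists and that it coincides with $\phi(q(\bar a))^{\cA}$.

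For the base case of an atomic formula $\alpha$, Lemma~\ref{L.Atomic} applied inside the finite product $A_{[m,n)}$ yields $f_\alpha(m,n) = \sup_{m \le k < n} \alpha(\bar a_k)^{A_k}$ (and analogously for the metric). Then $\lim_n f_\alpha(m,n) = \sup_{k \ge m} \alpha(\bar a_k)^{A_k}$ and $\lim_m$ of the latter is $\limsup_k \alpha(\bar a_k)^{A_k}$, which is precisely $\alpha(q(\bar a))^{\cA}$ by definition. For a continuous connective $\phi = f(\psi_1, \dots, \psi_\ell)$ the claim follows by continuity of $f$ from the inductive hypothesis applied to each $\psi_i$.

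The substantive step is the quantifier case; take $\phi(\bar y) = \inf_{\bar x} \psi(\bar x, \bar y)$ (the $\sup$ case is symmetric). By the inductive hypothesis,
\[
\phi(q(\bar a))^{\cA} = \inf_{\bar b \in A_\infty} \psi(q(\bar b), q(\bar a))^{\cA} = \inf_{\bar b \in A_\infty} \lim_m \lim_n \tilde\psi(\bar b, \bar a)^{A_{[m,n)}},
\]
while $f_\phi(m,n) = \inf_{\bar c \in A_{[m,n)}} \tilde\psi(\bar c, \bar a)^{A_{[m,n)}}$. The inequality $\phi(q(\bar a))^{\cA} \ge \lim_m \lim_n f_\phi(m,n)$ is immediate, since restricting any admissible $\bar b$ to coordinates in $[m,n)$ gives a candidate for the inner infimum. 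For the reverse inequality, given $\varepsilon > 0$, choose disjoint blocks $[m_k, n_k)$ with $m_k \to \infty$ along which the right-hand side is approached to within $\varepsilon$, pick $\bar c^{(k)} \in A_{[m_k, n_k)}$ achieving the block infimum up to $\varepsilon$, and splice these into a single $\bar b \in A_\infty$ by padding with $0^{A_j}$ on the gaps. Reapplying the inductive hypothesis to $\psi$ at this $\bar b$ along the cofinal sequence of block endpoints produces a value of $\psi(q(\bar b), q(\bar a))^{\cA}$ within $O(\varepsilon)$ of the right-hand side.

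The principal obstacle is this splicing construction in the quantifier case. One must arrange the blocks $[m_k,n_k)$ and the padding so that the double limit $\lim_{m'} \lim_{n'} \tilde\psi(\bar b, \bar a)^{A_{[m',n')}}$ is driven by the block contributions rather than by the zero padding; this requires the blocks to be cofinal in a way that the outer $\lim_{m'}$ sweeps into successive blocks while the inner $\lim_{n'}$ captures each block's value before the padding has an effect. A separate bookkeeping issue is verifying existence of $\lim_m \lim_n f_\phi(m,n)$ at the quantifier step, which is inherited by showing that the inner limit in $n$ is monotone (inf over a growing domain) and the outer limit in $m$ is the supremum of the resulting tail infima.
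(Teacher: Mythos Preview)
The paper offers no proof of this lemma: it is listed alongside Lemma~\ref{L.Atomic} as one of ``two immediate consequences of definitions in order to furnish the intuition'' and carries only a bare \qed. It is not invoked anywhere else in the paper, so there is nothing to compare your argument against.

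Your inductive strategy is the natural one, and the atomic and connective cases are correct. The quantifier case, however, is genuinely incomplete---and you have in fact flagged both gaps yourself without closing them.

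First, the monotonicity claim is unjustified. You write that for $\phi=\inf_x\psi$ the map $n\mapsto f_\phi(m,n)$ is an ``inf over a growing domain,'' hence monotone. But as $n$ increases two things change simultaneously: the domain $A_{[m,n)}$ of the infimum grows, \emph{and} the structure in which $\tilde\psi$ is evaluated changes, since each atomic subformula becomes a supremum over more coordinates. Once a non-monotone connective is applied to those suprema, the value being infimized can move in either direction. So neither monotonicity nor even existence of the inner limit follows from what you have written.

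Second, the splicing argument is only a description of what would need to happen, not a verification that it does. After assembling $\bar b$ from block witnesses $\bar c^{(k)}$ and zero padding, the inductive hypothesis gives you $\psi(q(\bar b),q(\bar a))^{\cA}=\lim_{m'}\lim_{n'}\tilde\psi(\bar b,\bar a)^{A_{[m',n')}}$ over \emph{all} intervals $[m',n')$, not just your chosen blocks. For intervals that straddle block boundaries or contain padding, the atomic suprema mix block and padding coordinates \emph{before} the connective is applied, so $\tilde\psi^{A_{[m',n')}}$ bears no evident relation to the individual block values $\tilde\psi(\bar c^{(k)},\bar a)^{A_{[m_k,n_k)}}$. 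Saying that the blocks must be ``cofinal in a way that the outer $\lim_{m'}$ sweeps into successive blocks'' is a restatement of the problem, not a solution.
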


 In \cite{CoFa:Automorphisms} the following result was alluded to 
 as `obviously true.' It may not be obvious, but at least it is true. 
  
 \begin{thm} \label{T.MT.1} 
 Every reduced product corresponding to the Fr\'echet ideal, 
 $\prod_n A_n/\bigoplus_n A_n$, is countably 
 saturated. 
 \end{thm}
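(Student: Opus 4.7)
Let $\bft(\bar x) = \{\phi_k(\bar x) = r_k : k \in \omega\}$ be a consistent countable type over $\cA = \prod_n A_n / \bigoplus_n A_n$. By replacing each condition $\phi_k(\bar x) = r_k$ with $|\phi_k(\bar x) \dminus r_k| + |r_k \dminus \phi_k(\bar x)| = 0$ and applying Lemma~\ref{L.PNF}, I may assume that $\bft$ consists of conditions $\phi_k(\bar x) = 0$ with each $\phi_k$ in prenex normal form. Lift each parameter in $\phi_k$ from $\cA$ to a representative in $\prod_n A_n$. By Lemma~\ref{L.Product}, realizing $\bft$ amounts to producing a single $\bar a \in \prod_n A_n$ such that, for every $k$ and every $\e > 0$, there is $m_0$ with $\tilde{\phi}_k(\bar a)^{A_{[m,n)}} < \e$ whenever $m_0 \leq m \leq n$.

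The critical tool will be the discretization results of \S\ref{S.Deconstructing}. For a PNF formula $\phi(\bar x) = \Qf_{y_0} \cdots \Qf_{y_{s-1}} f(\alpha_0(\bar y, \bar x), \dots, \alpha_{j-1}(\bar y, \bar x))$, the atomic subformulas in the product $A_{[m,n)}$ satisfy $\alpha_l(\bar y, \bar a)^{A_{[m,n)}} = \sup_{i \in [m,n)} \alpha_l(\bar y_i, \bar a_i)^{A_i}$ by Lemma~\ref{L.Atomic}, but the continuous connective $f$ does not commute with this supremum. Discretizing $f$ and each $\alpha_l$ to take values in a fixed finite grid $\{t/N : 0 \leq t \leq N\}$ (up to uniform error $1/N$) replaces this obstruction by a finite case analysis: on each coordinate $i$, one decides which discrete value the tuple of atomic subformulas attains, and this determines the contribution to $\phi$ coordinate by coordinate. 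The resulting key lemma (to be extracted from \S\ref{S.Deconstructing}) says that $\tilde{\phi}(\bar a)^{A_{[m,n)}}$ is $O(1/N)$-close to $\sup_{i \in [m,n)} \psi_N(\bar a_i)^{A_i}$ for some ordinary formula $\psi_N$ derived from the discretization, with the quantifiers now evaluated independently on each coordinate.

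With this reduction in hand, the construction of $\bar a$ is a straightforward diagonal argument. By consistency of $\bft$, for each $N \in \omega$ choose $\bar b^N \in \prod_n A_n$ with $\phi_k(q(\bar b^N))^\cA \leq 1/N$ for all $k < N$. Using Lemma~\ref{L.Product} together with the discretization lemma, pick an increasing sequence $M_0 < M_1 < \cdots$ of integers such that $\psi^k_N(\bar b^N_i)^{A_i} < 2/N$ for all $i \geq M_N$ and all $k < N$, where $\psi^k_N$ is the discretized version of $\phi_k$ at precision $1/N$. Define $\bar a \in \prod_n A_n$ by $\bar a_i := \bar b^N_i$ for $i \in [M_N, M_{N+1})$. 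For fixed $k$ and $\e > 0$, once $m \geq M_N$ with $N$ large enough and $2/N + 1/N < \e$, every coordinate $i \in [m,n)$ lies in some block $[M_L, M_{L+1})$ with $L \geq N > k$, hence $\psi_N^k(\bar a_i)^{A_i} < 2/N$, so $\tilde{\phi}_k(\bar a)^{A_{[m,n)}} < \e$ by the discretization lemma. Lemma~\ref{L.Product} then gives $\phi_k(q(\bar a))^\cA = 0$ for each $k$, completing the realization.

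\textbf{Main obstacle.} The crux is the discretization step, which is \S\ref{S.Deconstructing}. The naive attempt --- pushing quantifiers through the product by Lemma~\ref{L.Atomic} and the identity $\inf_y \sup_i g_i(y_i) = \sup_i \inf_{y_i} g_i(y_i)$ --- works cleanly for purely atomic formulas but breaks once one applies a continuous connective before quantifying, since $f(\sup_i g_i(\bar y_i)) \neq \sup_i f(g_i(\bar y_i))$ in general. Approximating $f$ and the $\alpha_l$ by step functions with finite range converts the supremum inside $f$ into a discrete choice of range value and thereby recovers the commutation up to uniform error. Everything downstream (the diagonal construction and verification) is then routine.
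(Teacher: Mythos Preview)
Your proposal has a genuine gap at the ``key lemma'' you hope to extract from \S\ref{S.Deconstructing}. You claim that discretization yields a formula $\psi_N$ with
\[
\tilde\phi(\bar a)^{A_{[m,n)}}\approx \sup_{i\in[m,n)}\psi_N(\bar a_i)^{A_i},
\]
but no such $\psi_N$ exists in general. Take $A_i=\{0,1\}$, parameters $a\equiv 0$, $b\equiv 1$, and the quantifier-free formula $\phi(x)=|d(x,a)-d(x,b)|$. In $A_{[m,n)}$ one has $\phi(x)=|\sup_i x_i-\sup_i(1-x_i)|$, which is $1$ when $x$ is constant on $[m,n)$ and $0$ otherwise. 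A pointwise formula $\psi_N(x_i)$ can take only two values $c_0=\psi_N(0)$ and $c_1=\psi_N(1)$; you would need $\max(c_0,c_1)\approx 0$ for nonconstant $x$ yet $c_0\approx 1$ and $c_1\approx 1$ for constant $x$, which is impossible. The obstruction you correctly identify---$f(\sup_i g_i,\sup_i h_i)\neq\sup_i f(g_i,h_i)$---is not removed by discretizing $f$, because the value still depends on which coordinates realize the suprema of the \emph{several} atomic formulas, and these need not coincide.

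What \S\ref{S.Deconstructing} actually establishes (Lemma~\ref{L2}) is that $\phi(q(\bar a))^{\cA}$ is determined, up to $\e$, by the \emph{set} of relevant $m$-patterns of $\bar a$: the finite set recording which discretized tuples $(\alpha_0,\dots,\alpha_{k-1})$ are achievable in a $\forall\exists$ sense at cofinally many coordinates. This is strictly more information than any coordinatewise $\sup$. Consequently the naive splice $\bar a_i=\bar b^N_i$ on $[M_N,M_{N+1})$ need not realize $\bft$: even if each $\bar b^N$ has a good set of relevant patterns, the splice can acquire \emph{additional} relevant patterns (coming from different blocks), and these can push $\phi_k(q(\bar a))$ away from $r_k$. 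The paper's construction in \S\ref{S.T.MT.1} handles exactly this: it first passes to a subsequence along which the sets $R(m,n)$ of relevant patterns stabilize (pigeonhole, since there are only finitely many possible patterns), and then chooses the block boundaries $l(k)$ so that every pattern occurring past $l(k)$ is already relevant and every relevant pattern occurs in $[l(k),l(k+1))$. Your diagonal argument is missing this stabilization step, and without it the verification cannot go through.
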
 

The proof of Theorem~\ref{T.MT.1} given in  \S\ref{S.T.MT.1}
is  subtler than the proof of the corresponding result for the classical 
`discrete' logic (see e.g., \cite{Hodg:Model}), although  it proceeds by `discretizing' ranges of  
formulas (see~\S\ref{S.Deconstructing}). 

Saturation  of reduced products over ideals other than the Fr\'echet ideal
 will be considered in Theorem~\ref{T3} below.

\subsection{Deconstructing formulas} \label{S.Deconstructing} 
In this subsection we prepare 
the grounds for the proof of Theorem~\ref{T.MT.1}. 
For a bounded $D\subseteq \bbR$ and $n\geq 1$ let 
\[
F(D,n)=\{k 2^{-n-1}: D\cap ((k-1)2^{-n-1},(k+1)2^{-n-1})\neq \emptyset\}. 
\]
Proof of the following lemma is straightforward. 
\begin{lemma}\label{L.pattern} 
Assume $D\subseteq \bbR$ is bounded and $n\in \omega$. 
Then 
\begin{enumerate}
\item  $F(D,n)$ is finite 
\item every element of $D$ is within $2^{-n}$ of an element of $F(D,n)$. 
\item $F(D,n+1)$ uniquely determines $F(D,n)$. 
\item If $D$ is compact then 
\[
\{E\subseteq \bbR: E \text{ is compact and }F(D,n)=F(E,n)\}
\]
is an open neighbourhood of $D$ in the Hausdorff metric. \qed
\end{enumerate}
\end{lemma}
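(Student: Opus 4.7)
Parts (1)--(3) are elementary unpackings of the definition of $F(D,n)$, so my plan is to dispatch them briefly and reserve the main effort for (4). For (1), boundedness of $D\subseteq[-M,M]$ caps the number of dyadic points $k 2^{-n-1}$ whose open window $I_k:=((k-1)2^{-n-1},(k+1)2^{-n-1})$ can meet $D$, since only finitely many $k$ satisfy $I_k\cap[-M,M]\neq\emptyset$. For (2), given $x\in D$ I would take $k=\lfloor 2^{n+1}x\rfloor$, so that $k 2^{-n-1}\leq x<(k+1)2^{-n-1}$; this places $x$ inside $I_k$, whence $k 2^{-n-1}\in F(D,n)$, and $|x-k 2^{-n-1}|<2^{-n-1}<2^{-n}$.

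For (3), the main observation is the set identity
\[
((k-1)2^{-n-1},(k+1)2^{-n-1})=\bigcup_{j\in\{2k-1,\,2k,\,2k+1\}}((j-1)2^{-n-2},(j+1)2^{-n-2}),
\]
which is a short endpoint comparison at the dyadic scales $2^{-n-1}$ and $2^{-n-2}$. Reading this off yields the explicit rule that $k 2^{-n-1}\in F(D,n)$ if and only if $F(D,n+1)$ contains at least one of $(2k-1)2^{-n-2}$, $2k\cdot 2^{-n-2}$, or $(2k+1)2^{-n-2}$, exhibiting $F(D,n)$ as a determined function of $F(D,n+1)$.

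For (4) the plan is a compactness argument that exploits the finiteness from (1). For each $k 2^{-n-1}\in F(D,n)$ pick a witness $x_k\in D\cap I_k$ and set $\delta_k^+:=\min\{x_k-(k-1)2^{-n-1},(k+1)2^{-n-1}-x_k\}>0$. For each of the finitely many remaining $k$ relevant to the bounded range of $D$, the compactness of $D$ together with $D\cap I_k=\emptyset$ should produce a positive separation $\delta_k^-:=\dist(D,\overline{I_k})$. Taking $\delta$ as half the minimum of these finitely many positive numbers, any compact $E$ with $d_H(D,E)<\delta$ will retain every element of $F(D,n)$ (a point of $E$ within $\delta$ of $x_k$ still lies in $I_k$) and gain no new ones (any point of $E$ is within $\delta$ of $D$, so cannot reach a window that $D$ was separated from).

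The main obstacle is securing positivity of $\delta_k^-$: a compact $D$ may meet a boundary grid point $(k\pm 1)2^{-n-1}$ of some $I_k$ with $k\notin F(D,n)$, forcing $\dist(D,\overline{I_k})=0$ and breaking the template above. I would handle this by a direct local analysis at the finitely many grid points that $D$ contains, possibly combined with a preliminary genericity reduction to $D$ disjoint from the dyadic grid $2^{-n-1}\mathbb Z$, which is enough for the downstream use in Theorem~\ref{T.MT.1}.
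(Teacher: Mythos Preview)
The paper offers no proof (it marks the lemma as ``straightforward'' with a \qed), so there is nothing to compare against. Your treatments of (1)--(3) are fine.

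For (4), the obstacle you flag is not merely a defect of your proof template: it is a counterexample to the statement as written. Take $n=0$ so the grid is $\tfrac12\mathbb{Z}$ and the windows are $I_k=((k-1)/2,(k+1)/2)$. Let $D=\{0\}$. Then $0\in I_k$ iff $k=0$, so $F(D,0)=\{0\}$. For any $\varepsilon\in(0,\tfrac12)$ let $E=\{\varepsilon\}$; then $\varepsilon\in I_0\cap I_1$, so $F(E,0)=\{0,\tfrac12\}\neq F(D,0)$, while $d_H(D,E)=\varepsilon$. Hence the set in (4) is not a neighbourhood of $D$. No ``direct local analysis at the finitely many grid points that $D$ contains'' will rescue this, because the example already has $D$ a single grid point.

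Your second suggestion is the right one: (4) holds provided $D$ is disjoint from the grid $2^{-n-1}\mathbb{Z}$, and your $\delta$-argument goes through verbatim under that hypothesis (now $\dist(D,\overline{I_k})>0$ whenever $D\cap I_k=\emptyset$, by compactness). Since Lemma~\ref{L.pattern} is not cited anywhere in the paper---the proof of Theorem~\ref{T.MT.1} uses only Lemma~\ref{L2} and the finiteness of the set of patterns---the imprecision in (4) is harmless for the downstream results.
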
 

Let $\phi(\bar y)$ be  a formula in prenex normal form. By adding dummy variables if necessary,  for some $n$ and $k$ 
we can represent $\phi(\bar y)$ as 
\begin{equation}
\label{Eq.PNF}
\adjustlimits \sup_{x(0)} \inf_{x(1)} \dots \adjustlimits \sup_{x(2n-2)} \inf_{x(2n-1)} 
f(\alpha_0(\bar x, \bar y), \dots \alpha_{k-1}(\bar x, \bar y))
\tag{PNF}
\end{equation}
where $f$ is a continuous function and $\alpha_i$ for $i<k$ are atomic formulas, 
possibly with parameters in a fixed model $A$.  

If $\alpha$ is an atomic formula then we let $F(\alpha, n):=F(D,n)$ where $D$ is the set of all possible values of $\alpha$.  Recall that $D$ is always compact. This is true even if $A$ is  a C*-algebra and $\alpha(x)$ is a *-polynomial 
 because the syntax requires variable $x$ to range over a fixed bounded ball (see~\cite{FaHaSh:Model2}).

If $\phi$, $n$, $k$  and $\alpha_i$ for $i<k$, are  as in (PNF)   and $\bar a$ is a tuple in $A$ of the appropriate sort then we define 
 \emph{$n$-pattern of $\phi(\bar a)$} in $A$ to be 
\begin{align*} 
\textstyle P(\phi, \bar a,n)^A:=&\{\bar r\in \prod_{i=0}^{k-1} F(\alpha_i, n): \\ 
&(\forall x(0))(\exists x(1)) \dots (\forall x(2n-2)) (\exists x(2n-1)) \\
&\qquad\max_i|\alpha_i(\bar x, \bar a)^A-r_i|< 2^{-n}\}. 
\end{align*} 
All $x(j)$ range over the relevant domain in $A$.

Assume $\phi_0(\bar y), \dots, \phi_{m-1}(\bar y)$ are as in (PNF)  
with  free variables $\bar y$ of the 
same sort and $\bar a$ is a tuple in $A$ of the appropriate sort. 
 Then we define the \emph{$n$-pattern
 of $\phi_0(\bar a), \dots, \phi_{m-1}(\bar a)$ in $A$} (or \emph{the $n$-pattern of $\phi_0, \dots, \phi_{m-1}$ and $\bar a$ in $A$})
 to be the set
 \[
\textstyle \prod_{i=0}^{m-1} P(\phi_i, \bar a,n)^A. 
 \]

 \begin{lemma} \label{L.types} For all $n\geq 1$ and every language $\cL$, 
 a tuple of $\cL$-formulas in prenex normal form  
 has at most finitely many possible  distinct $n$-patterns in all $\cL$-structures. 
 \end{lemma}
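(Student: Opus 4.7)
The plan is purely structural: I would unwind the definition of the $n$-pattern and observe that it is, by construction, a subset of a fixed finite set depending only on the syntactic data $\phi_0, \dots, \phi_{m-1}$ and on $n$, with no dependence on the ambient $\cL$-structure. This reduces the lemma to a finite counting.

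First I would verify that for any atomic $\cL$-formula $\alpha$ the finite set $F(\alpha, n)$ depends only on $\alpha$ and $n$, and not on the particular $\cL$-structure. This uses the convention (cited from \cite{FaHaSh:Model2}) that the syntax of $\cL$ restricts every variable to a bounded ball of its sort, so that the set $D \subseteq \bbR$ of all possible values of $\alpha$, taken across all $\cL$-structures and all admissible tuples, is a bounded (and compact) subset of $\bbR$ determined by $\alpha$ alone. By Lemma \ref{L.pattern}(1), $F(\alpha, n) \subseteq 2^{-n-1}\Zb$ is then a fixed finite set determined by the pair $(\alpha, n)$.

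Next, if $\phi_i(\bar y)$ is written in the (PNF) form with atomic subformulas $\alpha_0^{(i)}, \dots, \alpha_{k_i - 1}^{(i)}$, I would appeal directly to the definition of $P(\phi_i, \bar a, n)^A$, which exhibits it as a subset of the finite set
\[
S_i := \prod_{j=0}^{k_i - 1} F(\alpha_j^{(i)}, n).
\]
Consequently, as $A$ ranges over $\cL$-structures and $\bar a$ over tuples of the appropriate sort in $A$, the value $P(\phi_i, \bar a, n)^A$ is one of at most $2^{|S_i|}$ subsets of $S_i$. The $n$-pattern of the whole tuple, being the product $\prod_{i=0}^{m-1} P(\phi_i, \bar a, n)^A$, then takes at most $\prod_{i=0}^{m-1} 2^{|S_i|}$ distinct values, a finite number.

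I do not anticipate any substantive obstacle; the lemma is essentially a bookkeeping observation. The only step that merits explicit comment is the structure-independence of $F(\alpha, n)$, which rests on the bounded-sort convention of the logic of metric structures and is what makes a single finite bound work uniformly across all $\cL$-structures.
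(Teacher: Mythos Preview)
Your proposal is correct and follows essentially the same approach as the paper: both arguments observe that each $P(\phi_i,\bar a,n)^A$ is, by definition, a subset of a fixed finite set determined by the atomic subformulas of $\phi_i$ and by $n$, independently of $A$ and $\bar a$, and hence only finitely many $n$-patterns are possible. Your write-up is in fact somewhat more explicit than the paper's, giving the bound $\prod_{i<m} 2^{|S_i|}$ and isolating the structure-independence of $F(\alpha,n)$ as the one point requiring comment.
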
 
 
 \begin{proof} The range of every atomic formula is a compact subset of $\bbR$, 
 and therefore every $n$-pattern of an $m$-tuple of formulas
 is a finite subset of 
 \[
 \prod_{i=0}^{m-1} \{k/n: |k|\leq K\}
 \]
 for some fixed $K<\infty$. 
 \end{proof}

 Given a formula $\phi(\bar x)$ as in (PNF),   a tuple $\bar a$ of elements in $\prod_n A_n$
 of the appropriate sort and $n\geq 1$, consider the set of 
 all $n$-patterns of the form $P_i=P(\phi,\bar a_i,n)^{A_i}$. 
An $n$-pattern $P$ in this set is \emph{relevant} (for $\phi$ and $\bar a$ in $\prod_n A_n$) 
if  it is equal to $P_i$ for infinitely many $i$.

 \begin{lemma} \label{L2} 
 For every formula $\phi(\bar x)$ in prenex normal form  
 and every  $\e>0$ there is $m$ such that 
 for every quotient structure of the form $\prod_n A_n/\bigoplus_n A_n$
 and tuple $\bar a\in \prod_n A_n$ of the appropriate sort 
 the value $\phi(q(\bar a))^{\cA}$ is determined up to $\e$ 
 by the set of all relevant $m$-patterns for $\phi$ and $\bar a$ in $\prod_n A_n$. 
\end{lemma}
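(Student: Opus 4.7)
The plan is to approximate $\phi(q(\bar a))^{\cA}$ by a quantity depending only on the set of relevant $m$-patterns, with error controlled by our choice of $m$. Since the continuous function $f$ appearing in the (PNF) representation of $\phi$ is uniformly continuous on the compact product of the ranges of $\alpha_0,\ldots,\alpha_{k-1}$, fix $m$ so large that $|f(\bar r)-f(\bar s)|<\e/2$ whenever $|r_j-s_j|\leq 2^{-m+1}$ for every $j<k$. This makes the $2^{-m}$-discretization built into $F(\alpha_j,m)$ fine enough that all discretization errors appearing downstream contribute at most $\e$ in total.

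The core observation is that $\phi(q(\bar a))^{\cA}$ is the value of a $\sup/\inf$ game played with sequences in $\prod_i A_i$, whose final payoff $f(\limsup_i\alpha_0(\bar x_i,\bar a_i),\ldots,\limsup_i\alpha_{k-1}(\bar x_i,\bar a_i))$ depends only on the cofinal behaviour of the coordinate-wise atomic values. Using Lemma~\ref{L.Product}, I rewrite $\phi(q(\bar a))^{\cA}=\lim_{m'}\lim_n \tilde\phi(\bar a)^{A_{[m',n)}}$ and reduce the problem to analyzing the $\sup/\inf$ game on finite products. The next step is to show that $\tilde\phi(\bar a)^{A_{[m',n)}}$ is, up to error $O(2^{-m})$, determined by the multiset $\{P(\phi,\bar a_i,m)^{A_i}:i\in[m',n)\}$ via a combinatorial composition of patterns. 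The key input here is the game-theoretic meaning of $P_i$: $\bar r\in P_i$ exactly when the $\exists$-player in the alternating $\forall/\exists$-game on $A_i$ has a strategy to force the atomic tuple within $2^{-m}$ of $\bar r$, no matter how the $\forall$-player plays. Since in a finite product both players can act per coordinate, the set of achievable discretized atomic tuples on $A_{[m',n)}$ is a combinatorial function of the individual patterns $P_i$.

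Passing to the double limit, any pattern occurring in only finitely many coordinates affects only finitely many of the coordinate-wise atomic values and hence has no effect on any $\limsup$; consequently only the relevant patterns survive. By Lemma~\ref{L.types} the collection of all possible $m$-patterns is finite, so the set of relevant $m$-patterns is a finite combinatorial datum, and by the preceding step the limit value is a function of it to within $\e$. I expect the main obstacle to be this combinatorial third step: writing down a clean description of how per-coordinate patterns compose on a finite product under the $\sup/\inf$ alternation, and uniformly bounding the $2^{-m}$-discretization errors as $n\to\infty$ while ensuring the result depends only on which patterns appear and not on their multiplicities within $[m',n)$.
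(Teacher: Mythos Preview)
Your plan is essentially the paper's argument: pick $m$ by uniform continuity of $f$ so that a $2^{-m}$-perturbation of each atomic value moves $\phi$ by at most $\e/2$, then argue that in the quotient only the cofinally recurring patterns matter. Where you diverge is in routing through Lemma~\ref{L.Product} and finite blocks $A_{[m',n)}$; the paper instead unwinds $\phi(q(\bar a))^{\cA}\geq r$ directly as the tail condition
\[
(\forall\delta>0)(\forall l)\ \exists x(0)\,\forall x(1)\,\cdots\ f\bigl(\sup_{i\geq l}\alpha_0(\bar x_i,\bar a_i),\ldots,\sup_{i\geq l}\alpha_{k-1}(\bar x_i,\bar a_i)\bigr)\geq r-\delta,
\]
and then chooses $m$ as you did. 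This buys something concrete: working with the infinite tail rather than with finite products dissolves the ``composition'' obstacle you flag. Since by Lemma~\ref{L.types} there are only finitely many $m$-patterns, for all large $l$ the set $\{P_i:i\geq l\}$ is exactly the set of relevant patterns, each occurring infinitely often; and because the payoff feeds the atomic values through $\sup_{i\geq l}$, the displayed condition is insensitive to multiplicities and to which particular coordinates realize a given pattern. So the dependence on the mere \emph{set} of relevant patterns falls out without ever having to describe how patterns compose on a finite product.

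Your detour through $A_{[m',n)}$ is not wrong, but it manufactures the very difficulty you worry about: on a genuinely finite block one must argue separately that the alternating game value depends only on which patterns occur, and then take a double limit. The paper's direct tail formulation reaches the same conclusion in one step and is what you should write up; the paper's own proof is terse at exactly this point, so supplying the two-line justification above (finitely many patterns, hence eventually only relevant ones remain; $\sup_{i\geq l}$ ignores multiplicities) would make your write-up cleaner than the original.
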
 
 
 \begin{proof} By adding dummy variables if necessary, we may assume that there exist  natural numbers $n$ and $k$  
and atomic formulas $\alpha_i$, $\alpha_{k-1}$ such that 
 $\phi'(\bar x)$ is of the form 
\[
\adjustlimits \sup_{x(0)} \inf_{x(1)} \dots \sup_{x(2n-2)} \inf_{x(2n-1)} 
f(\alpha_0(\bar x, \bar y), \dots \alpha_{k-1}(\bar x, \bar y))
\]
 Lemma~\ref{L.Atomic} implies that 
 \[
 \alpha_n(\bar x, \bar a)^{A_\infty}=\sup_i \alpha_n(\bar x_i, \bar a_i)^{A_i}
 \]
and therefore     
$\phi'(\bar a)^{A_\infty}\geq r$ if and only if for every $\delta>0$  we have 
(writing $\bar x=(x(0), \dots,  x(2n-1))$ and $\bar x_i=(x(0)_i, \dots , x(2n-1)_i)$ for $i\in \omega$) 
\begin{multline*} 
\exists {x(0)} \forall {x(1)} \dots \exists {x(2n-2)} \forall {x(2n-1)} \\
f(\sup_i \alpha_0(\bar x_i, \bar a_i)^{A_i}, \dots \sup_i \alpha_{k-1}(\bar x_i, \bar a_i)^{A_i})\geq r-\delta.  
\end{multline*}
Hence  
$\phi'(q*\bar a))^{\cA}\geq r$ if and only if for every $\delta>0$ and every $l\in \omega$  we have 
\begin{multline*} 
\exists {x(0)} \forall {x(1)} \dots \exists {x(2n-2)} \forall {x(2n-1)} \\
 f(\sup_{i\geq l} \alpha_0(\bar x_i, \bar a_i)^{A_i}, \dots 
\sup_{i\geq l} \alpha_{k-1}(\bar x_i, \bar a_i)^{A_i})\geq r-\delta.   
\end{multline*}
 Since $\phi$ is equipped with a fixed modulus of uniform continuity
 we can choose $m$ such that changing the value of each  $\alpha_i$
 by no more than $2^{-m}$ does not affect the change of  value of $\phi$ by  more than $\e/2$. 

Therefore the value of $\phi(q(\bar a))^{\cA}$ up to $\e$ depends only 
on the set of relevant $m$-patterns for $\phi$ and  $\bar a$  in $\prod_n A_n$. 
\end{proof}

\subsection{Proof of Theorem~\ref{T.MT.1}}
\label{S.T.MT.1}
 Let $\bft=\{\phi_i(\bar y)=r_i: i\in \omega\}$ be a type with parameters in $\cA$ 
consistent with its theory. By Lemma~\ref{L.PNF} we may assume that all $\phi_i$ are given in prenex normal form. 

By lifting parameters of $\phi_i(\bar y)$ from the quotient 
 $\cA$ to $A_\infty$  we obtain formulas
 $\tilde\phi_i(\bar y)$, 
for $i\in \omega$. Since $\bft$ is consistent, for every $m$ we can choose $\bar a(m)\in A_\infty$ 
so that 
\[
|\phi_i(q(\bar a(m)))^{\cA}-r_i|\leq 2^{-m}
\]
 for all $i\leq m$. 
Fix $m$ for a moment and let 
\begin{align*}
R(m,n)=\{ P: &P\text{ is a relevant $n$-pattern}\\
&\textstyle\text{of  $\tilde\phi_0, \dots, \tilde\phi_{m-1}$ and $\bar a(m)$  in $\prod_i A_i$}\}. 
\end{align*}
Recursively choose increasing sequences  $l(i)$ and $m(i)$  for $i\in \omega$  and infinite sets
$\omega=X_0\supseteq X_1\supseteq\dots$  so that for every $k$ and all $m$  in $X_k$ 
we have the following.  
\begin{enumerate}
\item\label{I.MT.1}  $R(m,k)=R(m(k),k)$. 
\item The set of all patterns of 
  $\tilde\phi_0(\bar a(m(k))_j), \dots, \tilde\phi_{n-1}(\bar a(m(k))_j)$   occurring in $A_j$
  for some $l(m)\leq j <l(m+1)$ is equal to $R(m,k)$. 
  \end{enumerate}
  Let us describe the  construction of these objects. 
  Assume that $k\in \omega$ is such that $l(i)$, $m(i)$  and $X_i$ for $i\leq k$ are as required.  
  Since  there are only finitely many relevant patterns
   we can find an infinite 
  $X_{k+1}\subseteq X_k$ and  $R$ such that $R(m,k+1)=R$ for all $m\in X_{k+1}$. 
  Let $m(k+1)=\min (X_{k+1}\setminus k)$. 
  Now increase $l(k)$ if necessary to assure that 
  all patterns $P_j(m(k+1),k)$ for $j\geq l(k)$ are relevant. 
  Finally, choose $l(k+1)$ large enough so that 
  \[
  \{P_j(m(k),k): l(k)\leq j< l(k+1)\}=R(m(k), k). 
  \]
(This is done with the understanding that $l(k+1)$ may have to be suitably 
increased once $m(k+2)$  is chosen and that this change is innocuous.)

Once all of these objects are chosen define $\bar a\in A_\infty$ via 
\[
\bar a_i=\bar a(m(k))_i, \text{ if } l(k)\leq i <l(k+1). 
\]
The salient property of $\bar a$ is that for every $n$ the set of  relevant $n$-patterns
for $\tilde\phi_0(\bar a), \dots, \tilde\phi_{n-1}(\bar a)$  
is equal to $R(m(k), n)$ for all but  finitely many $k$. 
We claim that $q(\bar a)$ realizes  type $\bft$ in $\cA$. 

Fix $i$ and $\e>0$.  By Lemma~\ref{L2} and the construction of $\bar a$, 
for all large enough $m$ we have that $|\phi_i(q(\bar a))^{\cA}-\phi_i(q(\bar a(m))^{\cA}|<\e$. 
Therefore for every $i$ we have 
$\phi_i(q(\bar a))^{\cA}=r_i$, and $q(\bar a)$ indeed realizes $\bft$.

\section{Countable saturation of other reduced products} 
\label{S.Countable-other} 
We extend Theorem~\ref{T.MT.1} in two  different directions. 
In \S\ref{S.Continuous.Field} continuous fields of models are introduced
with an eye on Theorem~\ref{T.I.1}. In \S\ref{S.Generalizations} we consider 
more traditional reduced products over arbitrary ideals on~$\omega$.

\subsection{Continuous reduced products} 
\label{S.Continuous.Field}

We consider  saturation of continuous reduced products of the form 
$\Cb(X,A)/C_0(X,A)$ (see below for the definitions), and 
the main result section is Theorem~\ref{T2} that has  Theorem~\ref{T.I.1} as a consequence. 

By the Gelfand--Naimark duality (see e.g., \cite{Black:Operator}), 
the categories of compact Hausdorff spaces and unital abelian C*-algebras are equivalent.  
In particular, for a locally compact Polish space  $X$ 
there is a bijective correspondence between autohomeomorphisms
of Stone--\v{C}ech remainder $X^*$ and automorphisms of the 
quotient C*-algebra $\Cb(X)/C_0(X)\cong C(X^*)$. 
Here $\Cb(X)$ is the algebra of all bounded functions $f\colon X\to \bbC$
and $C_0(X)$ is its subalgebra of functions vanishing at  infinity. Since $C_b(X)$ is naturally isomorphic to $C(\beta X)$, 
this quotient algebra is isomorphic to the \emph{corona} (also called the 
\emph{outer multiplier algebra}) of the algebra $C_0(X)$.

Hence results  about  autohomeomorphisms of Stone--\v{C}ech remainders 
$X^*$ are special cases of  results about  automorphisms
of coronas of separable, nonunital C*-algebras. A tentative and very inclusive definition of 
\emph{trivial automorphisms} of the latter was  
given in \cite[Definition~1.1]{CoFa:Automorphisms}. 
 In the present paper we shall be concerned with the 
abelian case only.

If $A$ is a  metric structure and $X$ is a Hausdorff topological space
consider the space
\begin{multline*}
\Cb(X,A)=\{f\colon X\to A: f\text{ is continuous}\\
\text{ and its range is included in a single domain of $A$}\}. 
\end{multline*}
Hence every $f\in \Cb(X,A)$ has a well-defined sort and domain (see \cite[\S 2]{FaHaSh:Model2}) and we write 
$\cS^{\Cb(X,A)}=\{f\in \Cb(X,A): f$ is of sort $\cS\}$. 
Equip $\Cb(X,A)$ with  the metric
\[
d(f,g)=\sup_{x\in X} d(f(x),g(x)),
\]
and interpret predicate and function symbols  as in  $\prod_n A_n$ in 
\S\ref{S.Reduced}. 
If the language of $A$ has a distinguished constant $0_{\cS}$ for every sort $\cS$, consider   a  
submodel of $\Cb(X,A)$ defined as follows ($\cS$ ranges over all sorts in the language of $A$). 
\begin{multline*}
\textstyle C_0(X,A)=\bigcup_{\cS}\{f\in \cS^{\Cb(X,A)}: \text{ function } x\mapsto d(f(x),0_{\cS}^A)\\
\text{ vanishes at  infinity}\}. 
\end{multline*}
When $X$ is $\omega$ with the discrete topology, these models are isomorphic to $\prod_n A$ and 
$\bigoplus_n A$, respectively. 
In general $\Cb(X,A)$ is a submodel of $\prod_{t\in X} A$. 
In \S\ref{S.Continuous.Field} we make a few remarks on general continuous fields of metric structures.

Consider the quotient structure $\Cb(X,A)/C_0(X,A)$ with the interpretations of predicate and function symbols as defined in \S\ref{S.Reduced}.  
The question whether this quotient is countably saturated is  quite sensitive to the 
choice of $X$ and $A$. Before plunging into the main discussion we record a few relatively straightforward limiting facts. 

\begin{prop} 
\label{P.NS} 
Let  $X$ be a locally compact, non-compact Polish space and let $A$ be a metric structure. 
Write $\cA:=\Cb(X,A)/C_0(X,A)$. 
\begin{enumerate}
\item \label{P.NS.1} If every connected component of $X$ is compact then $\cA$
is countably saturated. 
\item  \label{P.NS.2}  If $X$ is connected and $A$ is discrete then $\cA\cong A$. 
\item \label{P.NS.3} If $X$ is connected and $A$ has a countably  infinite definable discrete subset then $\cA$ is not countably saturated. 
\item \label{P.NS.4} If $X$ is connected and $A$ is a separable, 
infinite-dimensional abelian C*-algebra with infinitely many projections then 
$\cA$ is not countably saturated. 
\end{enumerate}
\end{prop}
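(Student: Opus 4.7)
The plan is to handle each of the four parts separately, with (1) and (2) reducing to topological facts and (3) and (4) producing an explicit countable consistent type that is not realized in $\cA$.

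For Part (1), the hypothesis that every component of $X$ is compact, together with local compactness, yields a partition $X = \bigsqcup_{n \in \omega} K_n$ into compact clopen sets (the component quotient $X/{\sim}$ is a zero-dimensional locally compact Polish space, which admits a countable partition into compact clopen pieces that lifts back to $X$). Under this decomposition one has the canonical identifications $C_b(X,A) \cong \prod_n C(K_n,A)$ and $C_0(X,A) \cong \bigoplus_n C(K_n,A)$, so $\cA$ is a reduced product over the Fr\'echet ideal of the structures $C(K_n,A)$, and Theorem~\ref{T.MT.1} gives countable saturation.

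For Part (2), any continuous function from the connected space $X$ into a discrete metric structure $A$ is constant, so $C_b(X,A)$ is naturally isomorphic to $A$ via constants; since $X$ is non-compact, only $0^A$ vanishes at infinity, so $C_0(X,A) = \{0\}$ and $\cA \cong A$.

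For Part (3), let $\psi(y)$ be the formula cutting out the $\delta$-discrete set $\{d_n : n \in \omega\}$, and consider the type
\[
\textstyle \bft(x) = \{\psi(x) = 0\} \cup \{d(x,d_n) \geq \delta/2 : n \in \omega\}
\]
over $\cA$ with parameters $d_n \in A \subseteq \cA$. Any finite subset of $\bft$ (involving $d_0, \dots, d_N$) is realized by $x = d_{N+1}$, so $\bft$ is consistent. To rule out realization, suppose $q(f)$ realizes $\bft$; since $\psi(q(f)) = 0$, the function $\psi \circ f$ tends to $0$ as $x$ leaves every compact set, so outside some compact $K \subseteq X$ the image of $f$ lies in the disjoint union of the open $\delta/4$-balls around the $d_n$'s. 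Continuity of $f$ combined with connectedness then pins $f$ to a single such ball on each unbounded connected component of $X \setminus K$; after enlarging $K$ to absorb bounded components, one concludes that $f$ is asymptotically $\delta/4$-close to some $d_{n_0}$, contradicting $d(q(f), d_{n_0}) \geq \delta/2$.

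For Part (4), the projections in any unital abelian C*-algebra $A$ form the definable subset $\{y : \max(\|y - y^2\|, \|y - y^*\|) = 0\}$. In the abelian setting distinct projections satisfy $\|p - q\| = 1$ (they correspond to distinct clopen subsets of the Gelfand spectrum), hence form a $1$-discrete subset, which in a separable $A$ is countable; by hypothesis it is also infinite. Thus the set of projections furnishes the countably infinite definable discrete subset required by Part (3), and $\cA$ is not countably saturated.

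The main obstacle is making the non-realization argument in Part (3) fully rigorous when $X$ has more than one end at infinity: a priori $f$ could approach different $d_n$'s along different ends of $X$, in which case the naive type above can be satisfied. A complete treatment either refines the type by adding auxiliary parameters that isolate a single end, or replaces the distance conditions with formulas that test $f$ against a single end at a time. I would expect that the end structure of $X$ enters essentially only through such auxiliary parameters, and that the argument above, combined with a careful end-by-end localization, establishes non-realization uniformly in $X$.
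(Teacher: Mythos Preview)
Your arguments for parts (1), (2), and (4) coincide with the paper's: decompose into compact clopen pieces and apply Theorem~\ref{T.MT.1}; constants for connected-to-discrete maps; and projections as a $1$-discrete definable set.

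For part (3) the paper says only that it ``is an immediate consequence of (2),'' and your explicit type is the natural unpacking of that clause. The obstacle you flag at the end is not a technicality but a genuine gap: your type \emph{is} realized when $X$ has more than one end. Take $X=\bbR$ and any $A$ whose relevant sort contains a path from $d_0$ to $d_1$ (as in the C*-algebra case of (4)). Define $f\colon\bbR\to A$ to be $d_0$ on $(-\infty,-1]$, $d_1$ on $[1,\infty)$, and a continuous interpolant on $[-1,1]$. Then $\psi^{\cA}(q(f))=\limsup_{|t|\to\infty}\psi^A(f(t))=0$, while for every $n$ at least one end keeps $f$ at distance $\geq\delta$ from $d_n$, so $d(q(f),d_n)\geq\delta>\delta/2$. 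Thus $q(f)$ realizes $\bft$ and your non-realization argument fails as written; the same example shows that your proof of (4) is incomplete for $X=\bbR$.

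The repair you sketch---replacing the parameters $d_n$ by an enumeration of the full zero-set $Z=\{a\in\cA:\psi^{\cA}(a)=0\}$---works once you check that $Z$ is $\delta$-discrete (it is, by the same nearest-point analysis) and countable. Countability holds when $X$ has finitely many ends, since elements of $Z$ are determined by a choice of $d_n$ per end; for connected $X$ with infinitely many ends $Z$ can have size $2^{\aleph_0}$ and a different argument is required. The paper's one-line reduction to (2) does not address this distinction either, so on this point your write-up is at least as detailed as the original.
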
 

\begin{proof} 
\eqref{P.NS.1} If $X=\bigoplus_n K_n$ and each $K_n$ is compact, then 
with $A_n=\Cb(K_n,A)$ we have 
$\Cb(X,A)/C_0(X,A)\cong\prod_n A_n/\bigoplus_n A_n$ 
and the assertion follows from Theorem~\ref{T.MT.1}. 

\eqref{P.NS.2} If $A$ is discrete and $X$ is connected then $\Cb(X,A)\cong A$,  $C_0(X,A)$ 
is a singleton,  the quotient is isometrically isomorphic to $A$, and  
\eqref{P.NS.3} is an immediate consequence of \eqref{P.NS.2}. 
In a C*-algebra the set of projections is definable by the formula
$\|x^2-x\|+\|x^*-x\|$. This is a consequence of the weak stability of these relations. 
Since two distinct commuting projections are at distance 1, 
this set is discrete. Hence \eqref{P.NS.4} is a special case of \eqref{P.NS.3}. 
\end{proof} 

The following is a limiting example showing why one of the assumptions of 
Theorem~\ref{T2} below is needed. 

\begin{thm} \label{P.NS.5} There exists a one-dimensional, connected space
 $X$ such that $\cA$ is not countably 
saturated for any unital C*-algebra $A$.  
\end{thm}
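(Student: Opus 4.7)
I propose taking $X$ to be the infinite rooted binary tree realised as a metric $1$-complex, in which each edge is isometric to $[0,1]$. Then $X$ is connected, one-dimensional, locally compact, and completely metrizable. For $\sigma \in 2^{<\omega}$ let $X_\sigma$ denote the subtree rooted at $\sigma$, and define $e_\sigma \in \Cb(X)$ by $e_\sigma(x) := \max(0, 1 - d(x, X_\sigma))$, where $d$ is the path metric. Since $e_\sigma^2 - e_\sigma$ is supported in a compact neighbourhood of the edge above $\sigma$, $[e_\sigma]$ is a nonzero projection of norm one in $\Cb(X)/C_0(X) \cong C(X^*)$; the unital embedding $\Cb(X)/C_0(X) \hookrightarrow \cA$ given by $f \mapsto f \cdot 1_A$ then lifts each $[e_\sigma]$ to a projection $p_\sigma \in \cA$.

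The plan is to witness non-saturation with the countable type in one variable
$$
\bft(p) \;:=\; \{\,p^2 = p,\; p = p^*,\; \|p\| = 1\,\} \;\cup\; \{\,p \cdot p_{0^n} = p : n \in \omega\,\}.
$$
For consistency, any finite fragment mentioning only $p_{0^n}$ with $n \leq N$ is realised exactly by $p = p_{0^{N+1}}$, since $e_{0^{N+1}} e_{0^n} - e_{0^{N+1}}$ has compact support for $n \leq N+1$, so that $p_{0^{N+1}} p_{0^n} = p_{0^{N+1}}$ in $\cA$. For non-realisability, suppose $[f] \in \cA$ with $f \in \Cb(X,A)$ realises $\bft$, and set $g(x) := \|f(x)\|_A \in \Cb(X)$. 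Weak stability of the projection axioms, applied pointwise at infinity to $f$, forces $g(g-1) \in C_0(X)$; hence $G := [g]$ is a projection in $C(X^*)$ of norm $\|[f]\| = 1$. Because $e_{0^n}$ is scalar-valued, $\|f(x)(1-e_{0^n}(x))\|_A = (1 - e_{0^n}(x)) g(x)$, so the condition $p \cdot p_{0^n} = p$ yields $(1 - e_{0^n}) g \in C_0(X)$, and therefore $G \leq E_n := [e_{0^n}]$ in $C(X^*)$ for every $n$.

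It remains to derive a contradiction from the existence of a nonzero projection $G \in C(X^*)$ dominated by every $E_n$. Lift $G$ to $\tilde h \in \Cb(X)$ with $0 \leq \tilde h \leq 1$ and $\tilde h^2 - \tilde h \in C_0(X)$. For $\delta < 1/4$, weak stability gives a compact set outside which $\tilde h \in [0,\delta] \cup [1-\delta, 1]$; enlarging if necessary to a closed ball $K$ about the root, the set $V := \{\tilde h \geq 1/2\}$ is clopen in $X \setminus K$, hence a union of components of $X \setminus K$, each of the form $X_\tau \cup (\text{open incoming edge})$ for some $\tau \in 2^{<\omega}$. For every $n$, the condition $(1 - e_{0^n}) \tilde h \in C_0$ forces $V \setminus (X_{0^n} \cup e_n)$ to be relatively compact, where $e_n$ is the edge from $0^{n-1}$ to $0^n$; but a direct tree computation shows that $X_\tau \setminus (X_{0^n} \cup e_n)$ is unbounded whenever $\tau$ fails to extend $0^n$ (either $\tau$ and $0^n$ are incomparable and $X_\tau$ is disjoint from $X_{0^n} \cup e_n$, or $0^n$ strictly extends $\tau$ and $X_\tau \setminus X_{0^n}$ contains the sibling subtrees $X_{0^k 1}$ for $|\tau| \leq k < n$). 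Since no finite $\tau$ extends $0^n$ for every $n$, we conclude $V \subseteq K$, so $\tilde h \to 0$ at infinity, $G = 0$, contradicting $\|G\| = 1$. The main obstacle is precisely this geometric step; note that it uses only the topology of $X$, which yields failure of countable saturation of $\cA$ for every nonzero unital C*-algebra $A$.
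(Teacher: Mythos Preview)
Your proof is correct and follows the same overall strategy as the paper---exhibit a countable type about projections that is finitely satisfiable but not realized---but the details differ in interesting ways. The paper takes $X$ to be a \emph{comb} (a spine ray with a tooth ray attached at each integer), builds pairwise \emph{orthogonal} projections $q(a_n)$, classifies all projections in $C(X^*)$ as finite sums $\sum_{n\in F} q(a_n)$ or their complements, and then asks for a nonzero projection orthogonal to every $q(a_n)$. You instead take the infinite binary tree, use a \emph{decreasing} chain $p_{0^n}$, and ask for a nonzero lower bound. Where the paper's argument goes through a full description of the projection lattice, you go straight to the point: clopen subsets of $X^*$ correspond to unions of the (finitely many) components of $X\setminus K$ for a large ball $K$, and no such component can sit inside every $X_{0^n}^*$ since its root $\tau$ is a finite string. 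Your reduction from general unital $A$ to the scalar case via $g(x)=\|f(x)\|_A$ is also more explicit than the paper's remark that ``the general case is a straightforward extension by continuous functional calculus.'' One minor point worth stating for the reader: when you assert $g(g-1)\in C_0(X)$, you are implicitly using that a nearly self-adjoint element with $\|f(x)^2-f(x)\|$ small has spectrum concentrated near $\{0,1\}$, so $\|f(x)\|$ is forced close to $0$ or $1$; this is routine but is the substance behind ``weak stability.''
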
 

\begin{proof} With  $Y=\{0\}\times [0,\infty)$ and $X_n=\{n\}\times [0,\infty)$ for $n\in \omega$
let $X=Y\cup \bigcup_n X_n$ with the subspace topology inherited from $\bbR^2$. 

We give a proof in the case when $A=\bbC$. The general case is a straightforward extension
of the argument by using continuous functional calculus.

Define $a_n\in \Cb(X,A)$ for $n\in \omega$ so that 
$a_n(x)=1$ if $x\in X_n$, $a_n(x)=0$ if $x\in X_j$ for $j\neq n$, 
$a_n(x)=0$ on $\{0\}\times [n+1,\infty)$, and on $\{0\}\times [0,n-1] $ (with $[0,k]=\emptyset$ 
if $k<0$), and linear on intervals $\{0\}\times [n-1,n]$ and $\{0\}\times [n,n+1]$. 

Although $X$ is connected and  $\Cb(X,A)$ is projectionless we have that  $q(a_n)$, for $n\in \omega$, 
are orthogonal   projections in $\Cb(X,A)/C_0(X,A)$.  

\begin{claim} For every projection $p$ in $\Cb(X,A)/C_0(X,A)$ there exists a finite $F\subseteq \omega$
such that $p=\sum_{n\in F} q(a_n)$ or $p=1-\sum_{n\in F} q(a_n)$. 
\end{claim} 

\begin{proof} 
Let $a\in \Cb(X,A)$ be such that 
  $q(a)=p$. 
By replacing $a$ with $(a^*+a)/2$ and truncating its spectrum by using continuous functional 
calculus we may assume $a$ maps 
$X$ into $[0,1]$. 
Fix $\e<1/2$. 
Since $q(a)$ is a projection, the set 
 \[
 Z=\{x\in X: \e<a(x)<1-\e\}
 \]
  is  compact. 
  Hence there exists a compact $K$ 
  so that on every $X_n\setminus K$ and on $Y\setminus K$ all values of $a$ lie outside of $Z$. 
If $G_0=a^{-1}([0,\e])$ and $G_1=a^{-1}([1-\e,1])$ then by the continuity of $a$ 
one of $Y\setminus G_0$ or $Y\setminus G_1$ 
is compact. 

Assume for a moment that  $Y\setminus G_0$ is compact.  
By the continuity of $a$ there exists $k$ such that   $X_n\setminus G_0$ is compact for all $n\geq k$. 
With  $F=\{n: X_n\setminus G_1$ is compact$\}$ letting $\e\to 0$ one easily checks that $p=\sum_{n\in F} q(a_n)$. 

Otherwise, if $Y\setminus G_1$ is compact, a similar argument shows that 
 $F=\{n: X_n\setminus G_0$ is compact$\}$ is  a finite set and $p=1-\sum_{n\in F} q(a_n)$. 
\end{proof}

Consider  type $\bft(x)$   consisting of conditions $\|x^*-x\|=0$, $\|x^2-x\|=0$, $\|x\|=1$, and $x 
q(a_n)=0$ for all $n$. 
Its realization would be a nonzero projection orthogonal to $q(a_n)$ for all $n$. 
By Claim, there is no such projection. 
\end{proof} 

Theorem~\ref{P.NS.5} provides a countably degree-1 saturated abelian C*-algebra such that 
the Boolean algebra of its projections is isomorphic to the subalgebra of $\cP(\omega)$ generated by finite subsets, 
and therefore not countably saturated. 

In \cite[Lemma~2.1]{FaHa:Countable} the following was 
proved for all $a<b$ in $\bbR$ and every   countably degree-1 saturated algebra $C$. 
Every contable type 
such that each of its finite subsets is approximately satisfiable in $C$ by a self-adjoint element with spectrum included in  $[a,b]$ 
is realized in $C$ by a self-adjoint element with spectrum included in $[a,b]$. The following implies that 
this cannot be extended to projections, answering a question raised on p. 53 of \cite{FaHa:Countable}.

\begin{cor}\label{C.P} There are a countably degree-1 saturated C*-algebra $C$ and 
a countable 1-type $\bft$ over $C$ such that every finite subset of $\bft$ is realizable 
by a projection in $C$, but $\bft$ is not realizable by a projection in~$C$. 
\end{cor}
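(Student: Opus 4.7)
The plan is simply to package the construction from Theorem~\ref{P.NS.5} into the language of the corollary. Let $X$ be the one-dimensional connected space constructed in Theorem~\ref{P.NS.5}, take $A=\bbC$, and set $C:=\Cb(X)/C_0(X)$. Since $X$ is locally compact and Polish, $C_0(X)$ is a separable C*-algebra, so $C$ is the corona of a separable C*-algebra; by the main result of \cite{FaHa:Countable} (cited in the introduction of the present paper), $C$ is countably degree-$1$ saturated. This takes care of the ambient structure.

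For the type, take $\bft(x)$ to be the type used in the proof of Theorem~\ref{P.NS.5}, namely
\[
\bft(x)=\{\|x^*-x\|=0,\ \|x^2-x\|=0,\ \|x\|=1\}\cup\{\|x\,q(a_n)\|=0: n\in\omega\},
\]
where the $q(a_n)$ are the pairwise orthogonal projections of $C$ constructed there. This is a countable $1$-type over $C$. Any putative realization of $\bft$ would automatically be a norm-one projection (from the first three conditions) orthogonal to every $q(a_n)$, and Theorem~\ref{P.NS.5} shows that no such element exists in $C$; in particular, $\bft$ cannot be realized by a projection in $C$.

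It remains to verify finite realizability, and this is the only thing that still needs to be done explicitly: given a finite subset $\bft_0\subseteq\bft$, only finitely many indices $n_1,\dots,n_k$ appear in the orthogonality conditions of $\bft_0$. Pick any $m\in\omega\setminus\{n_1,\dots,n_k\}$ and set $x:=q(a_m)$. Then $x$ is a projection, $\|x\|=1$, and $x\,q(a_{n_i})=q(a_m a_{n_i})=0$ for $i=1,\dots,k$ by the pairwise orthogonality of the $q(a_n)$. Hence $x$ realizes $\bft_0$ and is itself a projection in $C$.

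I do not anticipate a serious obstacle: all the non-trivial work is already done inside Theorem~\ref{P.NS.5} and \cite{FaHa:Countable}. The only point that warrants a sentence of care is the observation that a finite subset of $\bft$ constrains only finitely many $q(a_n)$, so that orthogonality to the remaining ones of the abundant supply of pairwise orthogonal projections $\{q(a_n):n\in\omega\}$ provides an explicit projection realization, while the full type asks for orthogonality to all of them simultaneously, which is exactly what Theorem~\ref{P.NS.5} rules out.
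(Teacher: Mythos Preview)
Your proof is correct and follows exactly the same approach as the paper's own proof, which consists of a one-line reference back to Theorem~\ref{P.NS.5} and the type $\bft$ defined there. You have simply made explicit the two points the paper leaves to the reader: that $C$ is countably degree-$1$ saturated by \cite{FaHa:Countable}, and that any finite subset of $\bft$ is realized by some $q(a_m)$ with $m$ outside the finitely many indices mentioned, using the pairwise orthogonality of the $q(a_n)$ already established in the proof of Theorem~\ref{P.NS.5}.
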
 

\begin{proof} Let $C$ be $\Cb(X)/C_0(X)$, with $X$ as in Theorem~\ref{P.NS.5} 
and let $\bft$ be as defined there. 
\end{proof}  

We should also remark that \cite[Question~5.3]{FaHa:Countable}, intended as a test for the question 
answered in Corollary~\ref{C.P}, trivially has a negative answer in a projectionless algebra such as $C([0,1)^*)$. 

The Calkin algebra also fails to be quantifier-free countably saturated (\cite[\S 4]{FaHa:Countable}), 
although all coronas of separable C*-algebras are countably degree-1 saturated
(\cite[Theorem~1]{FaHa:Countable}, see also \cite{Voi:Countable} and \cite{EaVi:Degrees}). 
No example of an algebra which is quantifier-free countably saturated but not
countably saturated is presently known.

Recall that the 
topological boundary
of a subset $K$ of a topological space $X$ is $\partial K:=
\overline K\cap \overline{X\setminus K}$.

 \begin{thm} \label{T2} 
 Assume $X$ is a locally compact Polish  space 
 which  can be written as an increasing union of 
compact subspaces, $X=\bigcup_n K_n$, such that 
\[
\sup_n|\partial K_n|<\infty.
\] 
Then for any metric model $A$ 
such that each domain of $A$ is compact and 
locally connected 
the quotient $\Cb(X,A)/C_0(X,A)$ 
is countably saturated. 
 \end{thm}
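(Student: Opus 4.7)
The plan is to mirror the strategy of Theorem~\ref{T.MT.1} in the continuous-index setting, replacing "product of intervals $[l(k),l(k+1))$ in $\omega$" with "annular regions $K_{l(k+1)}\setminus \mathrm{int}(K_{l(k)})$ in $X$", and replacing trivial concatenation of sequences with genuine continuous splicing across the boundaries $\partial K_{l(k)}$. Suppose $\bft=\{\phi_i(\bar y)=r_i:i\in\omega\}$ is a type over $\cA=\Cb(X,A)/C_0(X,A)$ consistent with its theory; by Lemma~\ref{L.PNF} I may assume each $\phi_i$ is in prenex normal form, and by lifting parameters obtain formulas $\tilde\phi_i(\bar y)$ on $\Cb(X,A)$. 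For each $m$ choose $\bar a(m)\in\Cb(X,A)$ with $|\phi_i(q(\bar a(m)))^{\cA}-r_i|\leq 2^{-m}$ for all $i\leq m$.

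Next I adapt the pattern machinery of \S\ref{S.Deconstructing}. For a prenex formula $\phi$, a compact subset $K\subseteq X$ and a tuple $\bar a$ of elements of $\Cb(X,A)$, define the $n$-pattern $P(\phi,\bar a|_K,n)$ exactly as in \S\ref{S.Deconstructing} but with witness quantifiers ranging over $\Cb(K,A)$. Call a pattern \emph{relevant at infinity} for $\bar a$ if it equals $P(\phi,\bar a|_{K_{l+1}\setminus\mathrm{int}(K_l)},n)$ for infinitely many~$l$. An argument modeled on Lemma~\ref{L2}, which here uses $\sup_n|\partial K_n|<\infty$ to control how witnesses chosen on one annulus can or cannot be extended continuously across the finitely many boundary points into a neighbouring annulus, should yield the continuous analog: for every $\phi$ and every $\e>0$ there is $n$ such that $\phi(q(\bar a))^{\cA}$ is determined up to $\e$ by the set of relevant-at-infinity $n$-patterns of $\phi$ and $\bar a$. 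Together with Lemma~\ref{L.types} (finiteness of patterns), this allows the same diagonal bookkeeping as in \S\ref{S.T.MT.1}: choose sequences $m(k)$, $l(k)$ and nested infinite $X_k\subseteq\omega$ so that for each $k$ the set of $k$-patterns realized on the annuli $K_{l(k+1)}\setminus K_{l(k)}$ stabilizes to a fixed $R(m(k),k)$.

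The realizer $\bar a\in\Cb(X,A)$ is then defined to agree with $\bar a(m(k))$ on each annulus, except on small neighbourhoods of the finitely many points of $\partial K_{l(k)}$, where a continuous bridge is inserted between the values of $\bar a(m(k-1))$ and $\bar a(m(k))$. Two ingredients make this gluing possible. First, $N:=\sup_n|\partial K_n|<\infty$ means that only boundedly many bridging neighbourhoods are ever needed, and they can be chosen to have diameter so small that they do not perturb the patterns already arranged. Second, each domain of $A$, being a compact locally connected metric space, is a finite disjoint union of Peano continua and is therefore a finite disjoint union of path-connected components; any two points in a common component are joined by a continuous path in that component, which pulled back along a parametrization of the bridging neighbourhood yields the required continuous interpolation in $\Cb(X,A)$. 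A verification exactly as at the end of \S\ref{S.T.MT.1} then shows $\phi_i(q(\bar a))^{\cA}=r_i$ for every $i$.

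The main obstacle is the bridging step: without matching connected components one cannot continuously join $\bar a(m(k-1))$ to $\bar a(m(k))$ across $\partial K_{l(k)}$. I plan to absorb this into the pattern data by preprocessing each $\bar a(m)$ so that on a prescribed small neighbourhood of each boundary point it is constantly equal to a chosen representative of whichever component of the relevant domain its value lies in, and by including this "component label at each boundary point" as extra coordinates of the pattern. Since there are only finitely many components in each domain and at most $N$ boundary points on each annulus, the enriched patterns are still finite and the diagonalization produces component labels that match between consecutive annuli $k-1$ and $k$, which is precisely what is needed for the interpolation along paths to exist; as these paths and the constant-value preprocessing all introduce only arbitrarily small perturbations and the $\phi_i$ have fixed moduli of uniform continuity, the values $\phi_i(q(\bar a))^{\cA}$ are unchanged.
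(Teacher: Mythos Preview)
Your overall architecture (lift, discretize via patterns, diagonalize, splice) is right, and your observation that compact locally connected domains are finite disjoint unions of Peano continua is correct and relevant. The gap is in the splicing step, and it is not cosmetic.

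You propose to match \emph{component labels} of $\bar a(m(k-1))$ and $\bar a(m(k))$ at the boundary points and then bridge by an arbitrary path inside that component. But a path joining two points of a Peano continuum can have diameter as large as the whole continuum, so on the bridging neighbourhood $U\subseteq X$ the function $\bar a$ may take values far from both $\bar a(m(k-1))$ and $\bar a(m(k))$. Since the atomic formulas entering your annulus-patterns are suprema over the annulus, these stray values can and will alter the pattern on that annulus; the claim that ``these paths and the constant-value preprocessing introduce only arbitrarily small perturbations'' is therefore unjustified. The same objection applies to your preprocessing: replacing the actual boundary value by a fixed component representative is in general a large move in $A$, not a small one, and it changes the pattern data you have just worked to stabilize.

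What is missing is a mechanism that forces $\bar a(m(k-1))$ and $\bar a(m(k))$ to be genuinely \emph{close} on $\partial K_{l(k)}$, not merely in the same component. The paper obtains this as follows. With $N=\sup_n|\partial K_n|$, each $A^{\partial K_n}$ is a compact metric space whose $\e$-discrete subsets have size bounded by some $N_\e$ independent of $n$ (this is exactly where compactness of the domains and the hypothesis $N<\infty$ are used). One then colours pairs $\{m_1,m_2\}$ according to whether $d(\bar a(m_1)\!\upharpoonright\!\partial K_n,\;\bar a(m_2)\!\upharpoonright\!\partial K_n)\leq 1/k$ for infinitely many $n$; the bound $N_{1/k}$ rules out infinite homogeneous sets of the other colour, so Ramsey's theorem yields an infinite set on which any two approximate realizations are $1/k$-close on infinitely many boundaries. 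Diagonalizing through these sets one chooses $m(k)$ and $l(k)$ so that $\bar a(m(k-1))$ and $\bar a(m(k))$ differ by at most $1/k$ on $\partial K_{l(k)}$. Now local connectedness enters in the correct (quantitative) form: a compact locally connected metric space is uniformly locally arcwise connected, so points that are $1/k$-close are joined by arcs of diameter $o(1)$, and the bridge genuinely perturbs $\bar a$ by $o(1)$ on the shrinking neighbourhoods $U_k$. That is what makes the final verification go through.

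In short: replace ``same component'' by ``$\,1/k$-close on $\partial K_{l(k)}$'', obtain this via the Ramsey/pigeonhole argument on the uniformly compact boundary-restriction spaces $A^{\partial K_n}$, and then use uniform local connectedness to get \emph{short} bridging arcs. Your enriched-pattern idea with component labels is not needed and does not do the job.
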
 

Proof of this theorem is given below in  \S\ref{P.T2}. 
Theorem~\ref{P.NS.5} 
shows that the assumption 
on $\partial K_n$ in Theorem~\ref{T2} cannot be relaxed. 
The assumption  that the compact sets $K_n$ can be chosen to satisfy 
$\sup_n |\partial K_n|<\infty$ implies that for every $\e>0$ and every sort in $\cL$ 
whose interpretation is a compact subset of $A$ there exists $N$ 
such that every  $\e$-discrete subset of this sort in~$A^{\partial K_n}$ has cardinality $\leq N$.

 \begin{proof}[Proof of Theorem~\ref{T.I.1}]
  Apply Theorem~\ref{T2} to $\Cb(\bbH,\bbC)/C_0(\bbH,\bbC)$. 
   \end{proof}

\subsection{Relevant patterns} The notion of relevant pattern from \S\ref{S.Deconstructing} 
is modified to  present context in the natural way. 
For a tuple of formulas $\phi_0(\bar x), \dots, \phi_{k-1}(\bar x)$ in prenex normal form and 
 $\bar a$ in $\Cb(X,A)$ of the 
appropriate sort let  $P_t$ denote the pattern of $\phi_0(\bar x), \dots, \phi_{k-1}(\bar x)$ 
 and  $a_t$ in $A$ (where $a_t$ denotes the value of $a$ at $t$). 
An  $n$-pattern $P$  is \emph{relevant} (for $\tilde \phi_j(\bar a)$, $0\leq j<k$, 
in $\Cb(X,A)$) 
if   closure of the set $\{t: P=P_t\}$ is not included in $X$. 
Equivalently, an $n$-pattern 
$P$ is relevant (for $\tilde \phi_j(\bar a)$, $0\leq j<k$, 
in $\Cb(X,A)$) if for every $k$ there exists $t\in X\setminus K_k$ such that 
$P=P_t(m,n)$. 

The following is analogue of Lemma~\ref{L2}  for $\Cb(X,A)/C_0(X,A)$. 
Its proof is very similar to the proof of Lemma~\ref{L2}. 

\begin{lemma}  For every formula $\phi(\bar x)$  
 and  $\e>0$ there is $m$ such that 
 for every quotient  $C_b(X,A)/C_0(X,A)$
 and tuple $\bar a\in C_b(X,A)$ of the appropriate sort 
 the value of  $\phi(q(\bar a))$ in $C_b(X,A)/C_0(X,A)$ is determined up to $\e$ 
 by the set of all relevant $m$-patterns for $\phi(\bar x)$ and $\bar a$ in $\Cb(X,A)$. \qed
\end{lemma}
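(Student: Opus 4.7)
The plan is to mirror the proof of Lemma~\ref{L2} step by step, with the index set $\omega$ replaced by $X$, cofinite tails $\omega \setminus l$ replaced by complements $X \setminus K_\ell$ of a fixed compact exhaustion $X = \bigcup_\ell K_\ell$, and the phrase ``occurs for infinitely many $i$'' replaced by ``meets $X \setminus K_\ell$ for every $\ell$'' (the \emph{relevant} patterns, in the sense of the paragraph just above the lemma). By Lemma~\ref{L.PNF} we may assume $\phi(\bar y)$ has the prenex normal form \eqref{Eq.PNF}, with continuous outer connective $f$ and atomic constituents $\alpha_0, \dots, \alpha_{k-1}$. First I would establish the continuous analogue of Lemma~\ref{L.Atomic}: for any atomic $\alpha$ and $\bar b \in \Cb(X,A)$ of matching sort, $\alpha(\bar b)^{\Cb(X,A)} = \sup_{t \in X} \alpha(\bar b(t))^A$, and consequently
\[
\alpha(q(\bar b))^{\Cb(X,A)/C_0(X,A)} = \lim_{\ell \to \infty} \sup_{t \in X \setminus K_\ell} \alpha(\bar b(t))^A.
\]

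Substituting this into the prenex evaluation and using that altering $\bar x$ on any compact subset of $X$ leaves $q(\bar x)$ unchanged, one arrives, as in the second biconditional in the proof of Lemma~\ref{L2}, at the characterization that $\phi(q(\bar a))^{\Cb(X,A)/C_0(X,A)} \geq r$ iff for every $\delta > 0$ and every $\ell$ there exist witnesses $x(0), \dots, x(2n-1) \in \Cb(X,A)$ achieving the outer $\sup$/$\inf$ alternation so that
\[
f\bigl(\sup_{t \notin K_\ell} \alpha_0(\bar x(t), \bar a(t)), \dots, \sup_{t \notin K_\ell} \alpha_{k-1}(\bar x(t), \bar a(t))\bigr) \geq r - \delta.
\]
The right-hand side depends only on which $n$-patterns $P_t$ of $\phi$ and $\bar a$ can be realized at points $t \in X \setminus K_\ell$; sending $\ell \to \infty$ restricts attention to the relevant patterns alone.

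To finish, choose $m$ large enough that, by the modulus of uniform continuity of $f$, perturbing each of its arguments by at most $2^{-m}$ changes the output by at most $\e$. Lemma~\ref{L.pattern}(2) guarantees that the $m$-pattern $F(\alpha_i, m)$ approximates every attained value of $\alpha_i$ to within $2^{-m}$, so the $m$-patterns $P_t$ determine the inner evaluation of $f$ up to $\e$; combined with the previous paragraph this shows that the set of relevant $m$-patterns determines $\phi(q(\bar a))^{\Cb(X,A)/C_0(X,A)}$ up to $\e$, as required. The one genuine departure from Lemma~\ref{L2} is that the witnesses $x(j)$ are continuous functions on $X$ rather than arbitrary sequences, so $\bar x(t)$ cannot be prescribed independently at isolated points. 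I expect this to be the one point needing careful bookkeeping to make rigorous, but because the outer evaluation cares only about $\sup_{t \notin K_\ell}$, and any pattern realized on a nonempty open subset of $X \setminus K_\ell$ is already witnessed by a continuous $\bar x$, continuity of the witnesses does not change the set of attainable atomic sup-values in the limit.
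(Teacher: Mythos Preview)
Your proposal is correct and takes essentially the same approach as the paper: the paper's entire ``proof'' of this lemma is the sentence ``Its proof is very similar to the proof of Lemma~\ref{L2}'' followed by a \qed, and you have carried out precisely that transcription, replacing cofinite tails of $\omega$ by the sets $X\setminus K_\ell$ and reinterpreting relevance accordingly.

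Your closing remark about continuity of the witnesses $x(j)$ is a point the paper does not comment on at all. It is a reasonable observation, though your proposed resolution (that patterns are realized on open sets, so continuous witnesses suffice) is a bit heuristic; the paper simply declines to elaborate, treating the passage from $\prod_n A_n$ to $\Cb(X,A)$ as routine. If you wanted to tighten this, note that in the application (Theorem~\ref{T2}) the domains of $A$ are assumed compact and locally connected, which makes the approximation of pointwise witnesses by continuous ones straightforward; but for the purposes of matching the paper, what you have written is already more detailed than the original.
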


\subsection{Proof of Theorem~\ref{T2}}\label{P.T2} 
We roughly follow the proof of Theorem~\ref{T.MT.1}. 

 Let $\bft=\{\phi_i(\bar y)=r_i: i\in \omega\}$ be a type over  
  $\Cb(X,A)/C_0(X,A)$ 
consistent with its theory. By Lemma~\ref{L.PNF} we may assume that all $\phi_i$ are given in prenex normal form.  
Lift parameters of $\phi_i(\bar y)$ to $\Cb(X,A)$ to obtain formulas $\tilde \phi_i(\bar y)$. 
For every $m$ choose   $\bar a(m)\in \Cb(X,A)$ so that 
\[
|\phi_i(q(\bar a(m)))^{\Cb(X,A)/C_0(X,A)}-r_i|\leq 2^{-m}
\] 
for all $i\leq m$. 

Let $R(m,n)$ denote the set of relevant $n$-patterns for $\tilde \phi_j(\bar a(m))$, $0\leq j<m$ 
in $\prod_n A_n$.

Main difference between the present proof and the proof of Theorem~\ref{T.MT.1}  is that when 
`cutting and pasting' partial realizations of $\bft$ we need to assure continuity  of the resulting element $\bar a$. 

For every $k$ consider the set $A^{\partial K_k}$ with respect to the sup metric $d$. 
These sets are compact,  and since cardinalities of $\partial K_k$, for $k\in \omega$,   are uniformly  bounded
for every $\e>0$ there exists $N_\e<\infty$ such that no $A^{\partial K_k}$ has an $\e$-discrete subset
of cardinality $>N_\e$. For the simplicity of notation for every $k$ 
we introduce a pseudometric on $C_b(X,A)$ by 
\[
d_k(a,b)=d(a\rs\partial K_k, b\rs \partial K_k). 
\]
Recursively choose a sequence  of  infinite sets
$\omega=Y_0\supseteq Y_1\supseteq\dots$, 
  so that for every $n$ and all $m_1$ and $m_2$ in $Y_n$ we have the following.  
\begin{enumerate}
\item\label{2.l1}  $R(m_1,n)=R(m_2,n)$. 
\item \label{2.l2} $\{k: d_k(a(m_1), a(m_2))\leq 1/n\}$ is infinite. 
 \pushcounter
  \end{enumerate}
Assume that $Y_{n-1}$ has been chosen to satisfy the conditions. 
Consider the set of unordered pairs of elements of $Y_{n-1}$, 
\[
[Y_{n-1}]^2=\{\{m,m'\}\subseteq Y_i: m\neq m'\}
\]
and define partition $c\colon [Y_{n-1}]^2\to \{0,1\}$ by 
$c(\{m_1,m_2\})=0$ if \eqref{2.l2} holds and $c(\{m_1,m_2\})=1$ otherwise.

Assume for a moment that there exists an infinite $Z\subseteq Y_{n-1}$ 
such that $c(\{m,m'\})=1$ for all $\{m,m'\}\in [Z]^2$.  
With $N_{1/n}$ for which  there are no $1/n$-discrete subsets of $C_b(X,A)$ of 
cardinality $N_{1/n}$ for any $d_k$, consider the least $N_{1/n}$ elements of $Z$, listed as  
$m_i$, for $1\leq i\leq N_{1/n}$.  
For all $i<j$ and every large enough $k$ we have $d_k(a(m_i),a(m_j))>1/n$. 
Therefore $a(m_i)$, for $1\leq i\leq N_{1/n}$, form a $1/n$-discrete set for some $k$, 
contradicting the choice of $N_{1/n}$. 

By Ramsey's theorem there exists an infinite  $Y\subseteq Y_{n-1}$ such that $c(\{m_1,m_2\})=0$ for all
$\{m_1,m_2\}\in [Y]^2$.
Therefore $Y_{n-1}$ has an infinite subset for which  \eqref{2.l2} holds.  Since there are only finitely 
many relevant patterns we can find an infinite 
subset $Y_n$ of $Y$ satisfying \eqref{2.l1} and proceed.

This describes the recursive construction. Now we follow  construction 
from the proof of Theorem~\ref{T.MT.1} and 
recursively choose increasing sequences  $k(i)$ and $m(i)\in Y_i$ for $i\in \omega$  
  so that for every $n$  we have the following.  
\begin{enumerate}
\popcounter
\item \label{2.l4} The set of all patterns of 
  $\tilde\phi_0(\bar a(m(n))_j), \dots, \tilde\phi_{n-1}(\bar a(m(n))_j)$  occurring in $A_t$
  for some $t\in K_{k(n+1)}\setminus K_{k(n)}$ is equal to $R(m(n),n)$. 
\item \label{2.l3} $d_{k(n+1)}(a(m(n)), a(m({n+1})))\leq 1/(n+1)$. 
\pushcounter
\end{enumerate}
Assume $m(i)$ and $k(i)$ for $i\leq n$  were chosen so that \eqref{2.l4} and \eqref{2.l3} hold and moreover all patterns of 
$\tilde\phi_0(\bar a(m(n))_j), \dots, \tilde\phi_{n-1}(\bar a(m(n))_j)$  
occurring in $A_t$ for $t\notin K_{k(n)}$ are relevant. 
 Pick the least $m(n+1)\in Y_{n+1}$ greater than $m(n)$.
Since there are infinitely many $k$ such that $d_k(a(m(n)), a(m({n+1})))\leq 1/(n+1)$, 
we can choose large enough $k(n+1)$ so that both \eqref{2.l4} and \eqref{2.l3} are satisfied
and 
all patterns of 
$\tilde\phi_0(\bar a(m(n+1))_j), \dots, \tilde\phi_{n-1}(\bar a(m(n+1))_j)$  
occurring in $A_t$ for $t\notin K_{k(n+1)}$ are relevant. 

Once  these objects are chosen one is tempted to  define $\bar a\in \Cb(X,A)$ via  
\[
\bar a_t=\bar a(m(i))_t,  \text{ if } t\in K_{k(i+1)}\setminus K_{k(i)}. 
\]
However, for this function the map $t\mapsto d(\bar a_t,0^{\Cb(X,A)})$ 
is not necessarily continuous and therefore $\bar a$ may not be in  $\Cb(X,A)$. 
 Nevertheless, $a_{m(i)}$ and $a_{m(i+1)}$ differ by at most $1/(i+1)$ on $\partial K_{k(i+1)}$ 
 and we proceed as follows. 
 
 Fix $n$. Let $\e_n$ be such that $d(\bar x, \bar y)<\e_n$ implies 
 $\max_{i\leq n}| \phi_i(\bar x)-\phi_i(\bar y)|<1/n$. 
  By using finiteness of $\partial K_{k(n+1)}$ fix an open neighbourhood $U_n$
   of $\partial K_{k(n+1)}$ such that both $\bar a_{m(n)}$ and $\bar a_{m(n+1)}$ 
vary by at most $\e_n$ on $U_n$. 
Also assure that $U_n\subseteq K_{k(n+2)}$ and $U_m\cap U_n=\emptyset$ if $m\neq n$.

By Tietze's extension theorem recursively 
choose $h_n\colon X\to [0,1]$ such that for all $m$ and $n$ we have the following
(here $1_Z$ denotes the characteristic function of $Z\subseteq X$)
\begin{enumerate}
\popcounter
\item $0\leq h_n\leq 1$, 
\item  $h_{n-1}+h_n +h_{n+1}\geq 1_{K(k(n)}-1_{K_{k(n-1)}}$, 
\item $h_n+h_{n+1}\geq 1_{\partial K_{k(n)}}$, 
\item $\sum_n h_n=1_X$, 
\item $\supp h_m\cap \supp h_n=\emptyset$ if $|m-n|\geq 2$,  
\item $\supp h_n\cap \supp h_{n+1}\subseteq U_n$. 
\end{enumerate}
  Then 
\[
\textstyle \bar a_t=
\sum_n
h_n(t)\bar a(m(n))_t
\]
 is an element of $\Cb(X,A)$ which agrees 
 with $a_{m(n)}$ on $K_{k(n+1)}\setminus K_{k(n)}$ 
up to $\e_n$ 
 for all $n$. Moreover, the set of   relevant $n$-patterns of $\bar a$ 
is equal to $R(m(i), n)$ for infinitely many $i$. As before, this implies that $\bar a$ realizes type 
$\bft$. 
 
 This concludes the proof of Theorem~\ref{T2}. 
 
 \subsection{Continuous fields of models} 
In addition to models $\Cb(X,A)$ and $C_0(X,A)$, one can consider  
the following submodel of the former. 
\begin{multline*}
C_c(X,A)=\{f\colon X\to A :  f\text{ is continuous, its range is included in}\\
\text{  a single domain of $A$,
and it has compact closure.}\}. 
\end{multline*}
This smaller model has the  property that $C_c(X,A)\cong \Cb(X,A)$ 
if $X$ is compact and $C_c( X, A)\cong \Cb(\beta X,A)$ otherwise.

A possible definition of
 a \emph{continuous field of models} $A_t$, for $t\in X$, can be obtained  by fixing a 
 sufficiently saturated model (commonly called `monster model')  $M$,  
 requiring all fibers $A_t$ to be submodels of $M$ and considering the model 
 consisting of all continuous $a\colon Y\to M$ such that $a_t\in A_t$ for all $t$. 

However, 
one motivating example is given by  continuous fields of C*-algebras
and in this case one does not expect  all fibers to be  elementarily equivalent. 
It is not difficult to modify the definition   in \cite[IV.6.1]{Black:Operator} to the general context of 
continuous fields of metric structures of an arbitrary signature. At the moment we do not have an application for this notion, but as model-theoretic methods are gaining prominence in the 
theory of operator algebras this situation is likely to change. 
On a related note, sheaves of metric structures were defined
in \cite{lopes2013reduced}.

 \subsection{Reduced products over ideals other than the Fr\'echet ideal} 
\label{S.Generalizations}
Reduced products over a nontrivial proper ideal are countably saturated in both  extremal cases: 
when the ideal is maximal (i.e., when the quotient is  an ultrapower) and when we have the Fr\'echet
ideal (Theorem~\ref{T.MT.1}). We consider some of the intermediate cases. 

Assume $A_n$, for $n\in \omega$, are $\cL$-structures as in 
Theorem~\ref{T.MT.1} and that the language has a distinguished symbol $0^{\cS}$ for every $\cL$-sort $\cS$.   
For an ideal~$\cI$ on $\omega$ define (here $\cS$ ranges over all sots in $\cL$) 
\[
\textstyle \bigoplus_{\cI} A_n
:=\bigcup_{\cS} \{\bar x\in \cS^{\prod_n A_n}: (\forall \e>0)(\exists X\in \cI)  
\sup_{n\notin X}d(x_n,0_{\cS})\leq \e\}. 
 \]
 Therefore if $\cI$ is the Fr\'echet ideal then $\bigoplus_{\cI}$ is the standard direct sum. 
 We shall show that for many ideals~$\cI$  
   quotient structure $\prod_n A_n/\bigoplus_{\cI} A_n$
is countably saturated, analogously to the situation in the first-order logic. 
Following   \cite[Definition~6.5]{Fa:CH} we say that 
an ideal~$\cI$ on $\omega$ is \emph{layered} if there is $f\colon \cP(\omega)\to [0,\infty]$
such that
\begin{enumerate}
\item $A\subseteq B$ implies $f(A)\leq f(B)$,
\item  $\cI=\{A: f(A)<\infty\}$,
\item  $f(A)=\infty$ implies $f(A)=\sup_{B\subseteq A} f(B)$.
\end{enumerate}
The following extends \cite[Lemma~6.7]{Fa:CH}. 

 \begin{thm} \label{T3} 
 Every reduced product $\prod_n A_n/\bigoplus_{\cI} A_n$ 
 over a  layered ideal  
 is countably 
 saturated. 
 \end{thm}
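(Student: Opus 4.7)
The strategy mirrors the proof of Theorem~\ref{T.MT.1}, substituting ``set in $\cI$'' for ``finite set'' throughout and using the witness $f$ from the layered definition to drive the cut-and-paste step. First I would establish the layered analogue of Lemma~\ref{L2}: for a formula $\phi$ in prenex normal form and $\e>0$ there is $m$ such that, for any $\bar a\in\prod_n A_n$, the value $\phi(q(\bar a))$ in $\prod_n A_n/\bigoplus_\cI A_n$ is determined up to $\e$ by the set of $\cI$-\emph{relevant} $m$-patterns of $\bar a$, where a pattern $P$ is called $\cI$-relevant when $\{i:P(\phi,\bar a_i,m)^{A_i}=P\}\notin\cI$. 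The proof copies Lemma~\ref{L2} line by line, with the quantification $\inf_{X\in\cI}\sup_{n\notin X}$ replacing the $\limsup$ of the Fr\'echet case; monotonicity of $f$ ensures that atomic-formula values on an $\cI$-set make no contribution to $\phi(q(\bar a))$.

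Given a consistent type $\bft=\{\phi_i(\bar y)=r_i:i\in\omega\}$, put each $\phi_i$ in prenex normal form via Lemma~\ref{L.PNF}, lift parameters to $\prod_n A_n$, and for each $m$ choose $\bar a(m)\in\prod_n A_n$ realizing the first $m$ conditions up to $2^{-m}$. Since Lemma~\ref{L.types} bounds the number of possible $n$-patterns, iterated pigeonhole produces a subsequence $\bar a(m(k))$ and a coherent system $R(n)$ of pattern sets such that, for every $k\ge n$, $R(n)$ equals the set of $\cI$-relevant $n$-patterns of $\tilde\phi_0,\dots,\tilde\phi_{n-1}$ at $\bar a(m(k))$.

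The heart of the argument is to construct a single $\bar a\in\prod_n A_n$ whose $\cI$-relevant $n$-patterns equal $R(n)$ for every $n$. Assuming $\cI\supseteq\Fin$ (the only case relevant for reduced products), pick a $\subseteq$-increasing sequence $X_k\in\cI$ with $\bigcup_k X_k=\omega$ and set $\bar a_i:=\bar a(m(k))_i$ on $X_{k+1}\setminus X_k$, arranging the blocks to meet two requirements. For each $P\in R(k)$, the set $W_k^P$ of indices $i$ where $\bar a(m(k))$ displays $P$ at level $k$ and only $R(n)$-patterns at every level $n\le k$ is not in $\cI$: its complement is a finite union of $\cI$-sets. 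The layered clause $f(A)=\infty\Rightarrow f(A)=\sup\{f(B):B\subseteq A,\ B\in\cI\}$ then allows one to include in $X_{k+1}\setminus X_k$ a subset of each $W_k^P$ of $f$-value at least $k$; by monotonicity of $f$, the union over $k$ of these subsets has $f$-value $\infty$, making $P$ itself $\cI$-relevant for $\bar a$. Conversely, a pattern $Q\notin R(n)$ can appear in $\bar a$ at level $n$ only inside $\bigcup_{k<n}(X_{k+1}\setminus X_k)$, a finite union of $\cI$-sets and hence in $\cI$. The main obstacle is this simultaneous extraction: finitely many $P\in R(k)$ together with the coherence constraints for levels $n\le k$ must be handled in a single choice of block $X_{k+1}\setminus X_k$, which is exactly what the sup-over-subsets clause of the layered definition is designed to permit. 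Once $\bar a$ has the correct $\cI$-relevant pattern profile, the layered analogue of Lemma~\ref{L2} gives $\phi_i(q(\bar a))=r_i$ for every $i$, completing the proof.
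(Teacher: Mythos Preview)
Your proposal is correct and follows essentially the same route as the paper's proof: both generalize the argument for Theorem~\ref{T.MT.1} by replacing ``cofinite'' with ``$\cI$-positive'', introducing $\cI$-relevant patterns, recording the analogue of Lemma~\ref{L2}, stabilizing the relevant-pattern sets along a subsequence, and then performing a cut-and-paste over an increasing exhaustion $X_k\in\cI$ with $\bigcup_k X_k=\omega$. Your write-up is in fact more explicit than the paper's about where the layered hypothesis enters---namely, that clause~(3) lets you extract from each $\cI$-positive pattern set $W_k^P$ an $\cI$-subset of $f$-value at least $k$, so that the union over $k$ is $\cI$-positive and $P$ remains $\cI$-relevant for the assembled $\bar a$---whereas the paper simply asserts that the required $Y_n$'s can be chosen and defers to the analogy with Theorem~\ref{T.MT.1}.
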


\begin{proof} We follow  a similar route as in the  proof of 
Theorem~\ref{T.MT.1}. Assume $\bft=\{\phi_i(\bar x)=r_i: i\in \omega\}$ is a type and $\bar a(m)$ is such 
that $|\phi_i(q(\bar a(m)))^{\cA}-r_i|\leq 1/m$ 
for all $i\leq m$. By Lemma~\ref{L.PNF} we may assume that all $\phi_i$ are given in prenex normal form. 

Lift parameters of $\phi_i$, for $i\in \omega$, 
 to $\prod_n A_n$ to obtain formulas $\tilde\phi_i$, for $i\in \omega$.
Let $P_i(m,n)$ be the $n$-pattern 
of $\tilde\phi_j(\bar a(m))$, for $0\leq j<n$, in $A_i$.  
An $n$-pattern $P$ of 
$\tilde\phi_j(\bar a(m))$, for $0\leq j<n$, 
  is  \emph{$\cI$-relevant}  (for $\tilde\phi_0, \dots, \tilde\phi_{m-1}, \bar a(m)$) 
 if the set  
\[
\{i: P=P_i(m,n)\}
\]
 is $\cI$-positive. 
Let 
\begin{align*}
R(m,n)=\{P: & \text{ $P$ is an $\cI$-relevant pattern}\\
& \textstyle\text{ for }\tilde\phi_0, \dots, \tilde\phi_{m-1}, \bar a(m)\text{ in }\prod_n A_n\}. 
\end{align*}
Thus patterns relevant in the sense of \S\ref{S.Deconstructing} 
are $\Fin$-relevant. 

The analogue of Lemma~\ref{L2}, that for every $\phi$ and every 
$\e>0$ there exists $n$ such that the value of $\phi$ in $\prod_n A_n/\bigoplus_{\cI} A_n$ 
depends only on relevant $n$-patterns, is easily checked. 

Since $\cI$ includes the Fr\'echet ideal, we can choose sets $\emptyset=
Y_0\subseteq Y_1\subseteq $ in 
$\cI$, sets $\omega=X_0\supseteq X_1\supseteq\dots$
  and a sequence $m(i)$, for $i\in \omega$, 
such that $\bigcup_n Y_n=\omega$
and moreover we have the following for all $n$.  
 \begin{enumerate}
\item  $R(m,n)=R(m(n),n)$ for
all $m\in X_n$. 
\item The set of $n$-patterns of 
  $\tilde\phi_0(\bar a(m)_j), \dots, \tilde\phi_{n-1}(\bar a(m)_j)$  in $A_j$
  for $j\in Y_m\setminus Y_{m-1}$ is equal to $R(m,n)$. 
  \end{enumerate}
Recursive construction of these sequences is analogous to one in the proof of Theorem~\ref{T.MT.1}. As before, one concatenates $\bar a(m(i))$ into $\bar a$ and checks that $q(\bar a)$
realizes $\bft$. It is important to note that for every $n$ 
all $n$-patterns of $\tilde\phi_0, \dots, \tilde\phi_{m-1}, \bar a$ in $A_j$ for  $j\in \omega\setminus Y_n$ 
coincide. The analogue of Lemma~\ref{L2} stated above then implies that $\bar a$ realizes $\bft$. 
\end{proof} 

All $F_\sigma$ ideals are layered (essentially \cite{JustKr}), but there are Borel layered ideals of 
an arbitrarily high complexity (\cite[Proposition~6.6]{Fa:CH}). 
Examples of  non-layered ideals are the ideal of asymptotic density zero sets, 
$\cZ_0=\{X\subseteq \omega: \lim_n |X\cap n|/n=0\}$ and the ideal 
of logarithmic density zero sets, 
$\cZ_{\log}=\{X\subseteq \omega: \lim_n (\sum_{k\in X\cap n} 1/k)/\log k=0\}$. 
 Their quotient Boolean algebras are not 
countably quantifier-free saturated since any strictly decreasing sequence of positive 
sets whose upper 
densities converge to zero has no nonzero lower bound.  
However, a fairly technical  construction of an isomorphism 
between $\cP(\omega)/\cZ_0$ and $\cP(\omega)/\cZ_{\log}$ in  \cite{JustKr} 
(cf. also \cite[\S 5]{Fa:CH}) using the Continuum Hypothesis
resembles a back-and-forth construction of an isomorphism between elementarily equivalent 
countably saturated structures. 
Logic of metric structures puts this apparently technical result into the correct context.

A map $\phi\colon \cP(I)\to [0,\infty)$ is a  \emph{submeasure} if it is subadditive, 
monotonic and satisfies $\phi(\emptyset)=0$. 
If $\omega=\sqcup_n I_n$ is a partition of $\omega$ into finite intervals and 
$\phi_n$ is a submeasure on $I_n$ for every $n$, then the ideal 
\[
\cZ_\phi=\{A\subseteq \omega: \limsup_n \phi_n(A\cap I_n)=0\}
\]
is a \emph{generalized density ideal} (see \cite[\S 2.10]{Fa:CH}). 
Ideals $\cZ_0$, $\cZ_{\log}$, and all of the so-called \emph{EU-ideals} (\cite{JustKr}) are 
(generalized) density ideals (\cite[Theorem~1.13.3]{Fa:AQ}). 

On the other hand,  generalized density ideals
 are a special case of ideals of the form 
\[
\Exh(\phi)=\{A\subseteq \omega: \limsup_n \phi(A\setminus n)=0\}
\]
where $\phi$ is a lower semicontinuous submeasure on $\omega$
(take $\phi(A)=\sup_n \phi_n(A\cap I_n)$). 
All such ideals are $F_{\sigma\delta}$ P-ideals and 
by Solecki's theorem (\cite{Sol:Analytic}), every analytic P-ideal 
is of this form   for some $\phi$. 

For such ideal quotient Boolean algebra $\cP(\omega)/\Exh(\phi)$ is equip\-ped with the complete metric
($q$ denotes the quotient map)
\[
d_\phi(q(A), q(B))=\liminf_n \phi((A\Delta B)\setminus n). 
\]
See \cite[Lemma~1.3.3]{Fa:AQ} for a proof.

\begin{prop} \label{P.JK} If $\Exh(\phi)$ is a generalized density ideal then the 
quotient $\cP(\omega)/\Exh(\phi)$ with respect to $d_\phi$ 
 is a countably saturated metric Boolean algebra. 
 \end{prop}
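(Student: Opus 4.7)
The plan is to realize $(\cP(\omega)/\Exh(\phi), d_\phi)$ as a reduced product of finite metric Boolean algebras modulo the Fr\'echet ideal, and then invoke Theorem~\ref{T.MT.1}. Because $\Exh(\phi)$ is a generalized density ideal, by the definition given just above the statement there exist a partition $\omega = \bigsqcup_n I_n$ into finite intervals and submeasures $\phi_n$ on $I_n$ (which may be normalized so that $\phi_n(I_n) \leq 1$) for which $\Exh(\phi)$ coincides with $\cZ_\phi = \{A : \limsup_n \phi_n(A\cap I_n)=0\}$, with $\phi(A) = \sup_n \phi_n(A \cap I_n)$.

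For each $n$ let $B_n := \cP(I_n)$ viewed as a finite metric Boolean algebra in the language of Boolean algebras with a distinguished constant $0 := \emptyset$, equipped with the metric $d_n(X,Y) := \phi_n(X \triangle Y)$. The canonical bijection $A \mapsto (A \cap I_n)_n$ is an isomorphism of Boolean algebras $\cP(\omega) \to \prod_n B_n$. Under this identification, the submodel $\bigoplus_n B_n$ from \S\ref{S.Reduced} — consisting of those $(x_n)$ with $\limsup_n d_n(x_n, 0) = 0$ — becomes exactly $\{A : \limsup_n \phi_n(A \cap I_n) = 0\} = \Exh(\phi)$. Moreover, the reduced product metric from \S\ref{S.Reduced} evaluates to
\[
d(q(A), q(B)) = \limsup_n \phi_n\bigl((A \triangle B) \cap I_n\bigr),
\]
while writing $C := A \triangle B$ one has $\phi(C \setminus n) = \sup_m \phi_m((C \setminus n) \cap I_m)$, which for $n$ exceeding $\max(I_0 \cup \dots \cup I_{k-1})$ equals $\sup_{m \geq k} \phi_m(C \cap I_m)$. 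Hence $d_\phi(q(A), q(B)) = \liminf_n \phi(C \setminus n) = \limsup_m \phi_m(C \cap I_m)$, matching the reduced-product metric.

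With these identifications in place, $\cP(\omega)/\Exh(\phi)$ is naturally isomorphic, as a metric structure, to $\prod_n B_n / \bigoplus_n B_n$, and Theorem~\ref{T.MT.1} immediately gives countable saturation. The only non-routine ingredient is the bookkeeping identifying $\Exh(\phi)$ with $\cZ_\phi$ and checking that $d_\phi$ equals the limsup-metric produced by the construction of \S\ref{S.Reduced}; both reduce to the fact that, for a generalized density ideal, the tail behavior of $\phi$ is controlled by the blocks $\phi_n \rs \cP(I_n)$, so that taking $\liminf$ of tails and $\limsup$ along the blocks yield the same value.
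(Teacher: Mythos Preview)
Your proof is correct and follows essentially the same approach as the paper: define the metric Boolean algebras $(\cP(I_n), d_n)$ with $d_n(X,Y)=\phi_n(X\triangle Y)$, identify $\cP(\omega)/\Exh(\phi)$ with the reduced product $\prod_n B_n/\bigoplus_n B_n$, and apply Theorem~\ref{T.MT.1}. The only difference is that where the paper cites \cite[Lemma~5.1]{Fa:CH} for the isomorphism, you spell out the verification directly, including the check that $d_\phi$ agrees with the reduced-product metric.
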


\begin{proof} Let $I_n$ and $\phi_n$ be as in the definition of generalized density ideals. 
Letting $A_n$ be the metric Boolean algebra $(\cP(I_n), d_n)$ where 
$d_n(X,Y)=\phi_n(X\Delta Y)$. 
We have that $\cP(\omega)/\Exh(\phi)$ is isomorphic to $\prod_n A_n/\bigoplus_n A_n$
by \cite[Lemma~5.1]{Fa:CH}. 
By Theorem~\ref{T.MT.1}, this is a countably saturated metric structure. 
\end{proof} 


By Proposition~\ref{P.JK} the main result  of \cite{JustKr} is equivalent 
to the assertion that the quotients over $\cZ_0$ and $\cZ_{\log}$ equipped with the 
canonical metric are elementarily equivalent
in logic of metric structures. The latter assertion is more elementary and absolute between transitive models of ZFC. 
We note that it cannot be proved in ZFC that the quotients over $\cZ_0$ and $\cZ_{\log}$ are  isomorphic (\cite{Just:Repercussions}, 
see also \cite{Fa:AQ} and \cite{Fa:Rigidity}). 

 Proposition~\ref{P.JK} and the ensuing  discussion 
beg several questions. Can one describe  
theories of  quotients over analytic P-ideals in the logic of metric structures? 
How does the theory depend on the choice of the metric? 
Are those quotients countably saturated whenever the ideal includes
the Fr\'echet ideal? A positive answer would imply that
the Continuum Hypothesis implies that a quotient Boolean algebra over a nontrivial 
analytic P-ideal has $2^{\aleph_1}$ automorphisms, complementing the 
main result of \cite[\S 3]{Fa:AQ} and partially answering a question of Juris Stepr\=ans. 
A related problem is to extend the Feferman--Vaught theorem (\cite{feferman1959first}) 
to reduced products  of metric structures. The discretization method from \S\ref{S.Deconstructing} 
 should be relevant to this problem. 
Question of the existence of a nontrivial Borel ideal with a rigid quotient Boolean 
algebra will be treated in an upcoming paper.

 \section{Automorphisms} 
\label{S.Automorphisms}
 
 Proof of the following theorem is due to Bradd Hart and it is included with 
 his kind permission. 
 
 \begin{thm} \label{T.Aut} 
 Assume $A$ is a $\kappa$-saturated structure of density character $\kappa$. 
 Then $A$ has $2^\kappa$ automorphisms. 
 \end{thm}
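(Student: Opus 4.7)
The plan is to adapt the classical binary-tree back-and-forth construction to the continuous setting, producing $2^{\kappa}$ distinct automorphisms by making a binary choice at $\kappa$-many stages. Throughout I will assume (as is standard) that the language $\cL$ has cardinality at most $\kappa$, so that type spaces over parameter sets of density character $<\kappa$ have density character $<\kappa$ and, crucially, the \emph{definable closure} $\dcl(B)$ of any $B\subseteq A$ with density character $<\kappa$ again has density character $<\kappa$.

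First I would fix a dense subset $\{a_\alpha:\alpha<\kappa\}$ of $A$. The goal is to build a tree of partial elementary maps $\{f_s:s\in 2^{<\kappa}\}$ with the following properties: each $f_s\colon D_s\to A$ is elementary on its domain $D_s\subseteq A$; the domains and ranges are of density character $|s|+\aleph_0<\kappa$; the maps are coherent along the tree, i.e.\ $s\subseteq t$ implies $f_s\subseteq f_t$; and along any branch $\sigma\in 2^\kappa$ every $a_\alpha$ eventually enters both the domain and the range of some $f_{\sigma\rs\beta}$. The construction is done by transfinite recursion: at successor stages I do the usual back-and-forth book-keeping to catch each $a_\alpha$ (using $\kappa$-saturation of $A$ to realize in $A$ the required pulled-back types, which have $<\kappa$ parameters), and at limit stages I take unions.

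The branching is inserted at a cofinal set of stages. At a branching stage, given $f_s$ with image $B_s:=f_s[D_s]$ of density character $<\kappa$, I would invoke the fact that $\dcl(B_s)$ has density character $<\kappa$, hence is a proper subset of $A$; pick any $a\in A\setminus \overline{\dcl(B_s)}$. Because $a$ is not in the definable closure of $B_s$, there exists a realization $b_1\neq a$ in $A$ of $\tp(a/B_s)$ — concretely, there is $\varepsilon_s>0$ and $b_1\in A$ with $d(b_1,a)\geq\varepsilon_s$ realizing $\tp(a/B_s)$; this $b_1$ exists in $A$ by $\kappa$-saturation applied to the consistent type $\tp(a/B_s)\cup\{d(x,a)\geq\varepsilon_s\}$, whose consistency follows from $a\notin\dcl(B_s)$ (after unpacking via the image $B_s$, pick $a_0\in A$ with $f_s$-image close to $a$; take $b_0=a_0$ and build $b_1$). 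Set $f_{s^\frown 0}=f_s\cup\{(a_0,b_0)\}$ and $f_{s^\frown 1}=f_s\cup\{(a_0,b_1)\}$; both extensions remain elementary because $b_0$ and $b_1$ realize the same type over $B_s$.

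Given any branch $\sigma\in 2^\kappa$, the union $f_\sigma=\bigcup_{\alpha<\kappa}f_{\sigma\rs\alpha}$ is elementary and has dense domain and dense range by the bookkeeping; since automorphisms of metric structures are characterized by their action on a dense set and elementary maps are in particular isometric, $f_\sigma$ extends uniquely to an automorphism $F_\sigma$ of $A$. If $\sigma\neq\sigma'$, they diverge at some first stage $s$, and then $F_\sigma$ and $F_{\sigma'}$ send the common element $a_0$ chosen at that branching to points at distance $\geq\varepsilon_s>0$, so $F_\sigma\neq F_{\sigma'}$. This yields $|2^\kappa|=2^\kappa$ distinct automorphisms, and the other inequality $|\Aut(A)|\leq 2^\kappa$ is trivial since an automorphism is determined by its values on a dense set of cardinality $\kappa$.

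The main obstacle I anticipate is the branching step: verifying that for every small parameter set $B\subseteq A$ there is an element of $A$ not in $\dcl(B)$ and that one can, inside $A$ and at a uniformly positive distance, realize a second point with the same type over $B$. In the discrete case this is immediate from $|\dcl(B)|\leq |B|+|\cL|+\aleph_0$, but in the continuous setting it requires the density-character bound for $\dcl(B)$ together with a careful use of $\kappa$-saturation to exhibit a second realization at positive distance from the first, which is where the assumption that $A$ has density character $\kappa$ (rather than $<\kappa$) is essentially used.
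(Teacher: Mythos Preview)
Your approach is correct and follows the same binary-tree back-and-forth as the paper, but two implementation differences are worth noting. First, the paper carries along at each node $s$ an elementary submodel $A_s\prec A$ of density character $<\kappa$ together with a full automorphism $f_s\in\Aut(A_s)$, not merely a partial elementary map: at the branching step one extends $f_s$ to two automorphisms $g_0,g_1$ of $A$ (using homogeneity of the saturated model) differing on a chosen element, and then uses a L\"owenheim--Skolem argument to find a small $A_{s^\frown 0}=A_{s^\frown 1}\supseteq A_s$ closed under both. Second, and addressing precisely the obstacle you flag at the end, the paper's consistency argument for the branching avoids definable closure entirely: if $a_\xi$ is the least-indexed element not in $A_s$ and $\e=\dist(a_\xi,A_s)>0$, then with $\bft$ the type of $a_\xi$ over $A_s$ the $2$-type $\bft(x)\cup\bft(y)\cup\{d(x,y)\geq\e\}$ is consistent simply because any finite piece of $\bft$ is (approximately) realized inside $A_s$ by elementarity, and pairing such a realization with $a_\xi$ itself gives distance $\geq\e$. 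This is more direct than your route through definable closure and makes transparent where the density-character hypothesis enters. Finally, your parenthetical at the branching step (``pick $a_0\in A$ with $f_s$-image close to $a$; take $b_0=a_0$'') is garbled: what you need is $a_0$ realizing the $f_s$-pullback of the type of $a$ over $B_s$, with $b_0=a$, so that the two extensions send $a_0$ to $a$ and to $b_1$ respectively.
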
 
 
 \begin{proof} Enumerate  a dense subset of $A$ as  $a_\gamma$ for $\gamma<\kappa$. 
 We follow von Neumann's convention and write $2$ for $\{0,1\}$,  
 consider $2^{<\kappa}=\bigcup_{\gamma<\kappa} 2^\gamma$ and write $\len(s)=\gamma$
 if $s\in 2^\gamma$. 

 We construct families $f_s$ and $A_s$ for $s\in 2^{<\kappa}$ such that the following holds for all $s$. 
 \begin{enumerate}
\item $A_s$ is an elementary submodel of $A$ of density character $<\kappa$ including $\{a_\gamma: \gamma<\len(s)\}$. 
 \item $f_s\in \Aut(A_s)$. 
 \item If $s\sqsubseteq t$  then $A_s\prec A_t$ and 
 $f_t\rs A_s=f_s$. 
 \item\label{T.Aut.3} $A_{s^\frown 0}=A_{s^\frown 1}$ but $f_{s^\frown 0}\neq f_{s^\frown 1}$. 
 \end{enumerate}
 The only nontrivial step in the recursive construction is to assure \eqref{T.Aut.3}.  
 Fix $\gamma$ such that  $A_s$ and $f_s$ for all $s\in 2^\gamma$ 
 have been chosen and satisfy the above requirements. Fix one of these $s$.   
 Choose $a_\xi$ with the least index $\xi$ which is not in $A_s$ and let $\e=\dist(a_\xi, A_s)$.  
 If $\bft(x)$ is the type of $a_\xi$ over $A_s$, 
 then let $\bfs(x,y)$ be the 2-type $\bft(x)\cup \bft(y)\cup \{d(x,y)\geq \e\}$. 
Since $A_s$ 
is an elementary submodel of $A$, every finite subtype of $\bft(x)$ is realized in $A_\gamma$ and
therefore $\bfs(x,y)$ is consistent. By the saturation of $A$ it is realized by a pair 
of elements $b_1, b_2$ in $A$. Again by saturation (and the smallness of $A_s$) we can extend $f_s$ to an automorphism of $A$ that sends $b_1$ to $b_2$. Call this automorphism $g_1$. 
Let $g_0$ be an automorphism of $A$ that extends $f_s$ and sends $b_1$ to itself. 
By a L\"owenheim--Skolem argument we can now choose an elementary 
submodel $A_{s^\frown 0}$ of $A$ which includes $A_s\cup \{a_\xi, b_1\}$ 
and is closed under both $g_0$ and $g_1$. Let $A_{s^\frown 1}=A_{s^\frown 1}$.  
Then $g_0\rs A_{s^\frown 0}\neq g_1\rs A_{s^\frown 0}$ 
and  
$f_{s^\frown i}=g_i\rs A_{s^\frown i}$ for $i\in \{0,1\}$ are as required.  

This describes the recursive construction. For every $s\in 2^\kappa$ we have that 
$A=\bigcup_{\gamma<\kappa} A_{s\rs \gamma}$ and $f_s:=\bigcup_{\gamma<\kappa} f_{s\rs \kappa}$ for $s\in 2^\kappa$ 
are distinct automorphisms of $A$.
\end{proof}  
 
In the above proof  it may be possible to assure that $A_s=A_t$ whenever $\len(s)=\len(t)$. 
This is not completely obvious since the model $A_{s^\frown 0}=A_{s^\frown 1}$ defined in the course 
of the proof depends on automorphisms $g_0$ and $g_1$, and therefore on $s$.

 \begin{proof}[Proof of Theorem~\ref{T.I.1}]
By Theorem~\ref{T.I.1}  C*-algebra $C(\bbH^*)$ is countably saturated. 
Therefore by Theorem~\ref{T.Aut} the Continuum Hypothesis implies that it has $2^{\aleph_1}$ automorphisms. 
By the Gelfand--Naimark duality, each of these automorphisms corresponds to a distinct
autohomeomorphism of $\bbH^*$. 
\end{proof}

\section{The asymptotic sequence algebra}\label{S.asymptotic}
If $A_n$, for $n\in \omega$ is a sequence of C*-algebras then the reduced product $\prod_n A_n/\bigoplus_n A_n$ is 
the \emph{asymptotic sequence algebra}. If $A_n=A$ for all $n$ then we write $\ell_\infty(A)/c_0(A)$ for 
$\prod_n A/\bigoplus_n A$. Also, algebra $A$ is identified with its diagonal 
image in $\ell_\infty(A)/c_0(A)$ and one considers the relative commutant
\[
A'\cap \ell_\infty(A)/c_0(A)=\{b\in \ell_\infty(A)/c_0(A): a  b=b a\text{ for all } a\in A\}. 
\]
This is the \emph{central sequence algebra}. 
The following corollary provides an explanation of why the asymptotic sequence C*- algebras
  and the central sequence C*-algebras are almost as useful for the analysis of 
  separable C*-algebras as the ultrapowers and the corresponding relative commutants.

\begin{cor} \label{P1} If $A$ is a separable
 C*-algebra then the asymptotic sequence algebra $\ell_\infty(A)/c_0(A)$ 
 is countably saturated and   
  the corresponding central sequence algebra $A'\cap \ell_\infty(A)/c_0(A)$ 
 is  countably quantifier-free saturated. 

The  Continuum Hypothesis implies that each of these algebras has $2^{\aleph_1}$ automorphisms
and that $\ell_\infty(A)/c_0(A)$ is isomorphic to its ultrapower associated with a nonprincipal ultrafilter on~$\omega$.  
\end{cor}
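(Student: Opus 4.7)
The plan is to obtain Corollary~\ref{P1} by combining Theorem~\ref{T.MT.1} for the saturation of Fr\'echet quotients, a commutator trick to transfer saturation to the relative commutant, Theorem~\ref{T.Aut} for the automorphism count under CH, and the standard uniqueness of saturated models for the ultrapower identification.

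First, I would observe that $\ell_\infty(A)/c_0(A)$ is literally the reduced product $\prod_n A/\bigoplus_n A$ of the constant sequence $(A,A,\dots)$ over the Fr\'echet ideal, so its countable saturation is an immediate application of Theorem~\ref{T.MT.1}. For quantifier-free countable saturation of the central sequence algebra, fix a countable dense subset $\{d_n:n\in\omega\}\subseteq A$. Given a consistent countable quantifier-free type $\bft(\bar x)$ over $A'\cap \ell_\infty(A)/c_0(A)$, enlarge it to the countable type
\[
\bft'(\bar x)=\bft(\bar x)\cup\{\|x_j d_n-d_n x_j\|=0 : n\in\omega,\ j<\len(\bar x)\}
\]
over $\ell_\infty(A)/c_0(A)$. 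Quantifier-free formulas do not distinguish interpretations in the commutant from interpretations in the ambient algebra, so any finite subset of $\bft$ that is approximately realized inside the commutant is also approximately realized in $\ell_\infty(A)/c_0(A)$ by the same tuple, with the commutator conditions holding exactly. Hence $\bft'$ is consistent, and countable saturation of $\ell_\infty(A)/c_0(A)$ produces a realization that commutes with a dense subset of $A$, hence lies in $A'\cap \ell_\infty(A)/c_0(A)$ and realizes $\bft$.

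Under CH the density character of $\ell_\infty(A)/c_0(A)$ equals $2^{\aleph_0}=\aleph_1$, so countable saturation is saturation and Theorem~\ref{T.Aut} yields $2^{\aleph_1}$ automorphisms. For the commutant I would run the proof of Theorem~\ref{T.Aut} inside $\ell_\infty(A)/c_0(A)$ with the starting model $A_\emptyset$ containing the diagonal copy of $A$ and with every $f_s$ required to fix $A$ pointwise; the existence of the splitting pair $b_1\neq b_2$ inside the commutant and of a conjugating automorphism of $\ell_\infty(A)/c_0(A)$ that fixes $A$ and sends $b_1$ to $b_2$ still rests only on saturation of the ambient algebra, once one augments the relevant auxiliary $1$- and $2$-types with the commutator conditions $\|x d_n-d_n x\|=0$ from the previous paragraph. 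The resulting $2^{\aleph_1}$ automorphisms of $\ell_\infty(A)/c_0(A)$ preserve $A'\cap \ell_\infty(A)/c_0(A)$, and their restrictions to the commutant are distinct because the distinguishing elements already lie in the commutant. Finally, any ultrapower $(\ell_\infty(A)/c_0(A))^{\mathcal U}$ along a nonprincipal ultrafilter on $\omega$ is countably saturated (Łoś for metric structures, \cite{BYBHU}), has density character at most $2^{\aleph_0}=\aleph_1$ under CH, and is elementarily equivalent to $\ell_\infty(A)/c_0(A)$ via the elementary diagonal embedding; uniqueness of saturated models of a given complete theory and density character (the metric analogue proved in \cite{BYBHU}) then provides the isomorphism.

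The main obstacle is the automorphism count for the central sequence algebra: one needs the splitting step of Theorem~\ref{T.Aut} to be carried out so that $b_1$, $b_2$, and the conjugating automorphism of the ambient algebra simultaneously respect the centralising constraint on $A$. Absorbing the countably many commutator conditions into the auxiliary types is what makes this go through, and once this is in place the rest of the argument is a routine combination of the earlier results.
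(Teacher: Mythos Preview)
Your approach is essentially the same as the paper's: Theorem~\ref{T.MT.1} for saturation of the asymptotic sequence algebra, the commutator-condition trick for the central sequence algebra (which is exactly the content of \cite[Lemma~2.4]{FaHa:Countable}, cited rather than spelled out in the paper), Theorem~\ref{T.Aut} for the automorphism count, and uniqueness of saturated models for the ultrapower identification.

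There is one small gap in your treatment of the $2^{\aleph_1}$ automorphisms of the central sequence algebra. You write that the existence of the splitting pair $b_1\neq b_2$ inside the commutant ``rests only on saturation of the ambient algebra, once one augments the relevant auxiliary $1$- and $2$-types with the commutator conditions.'' But this augmentation is only consistent if the element $a_\xi$ whose type you are considering already lies in the commutant; if $a_\xi$ is an arbitrary element of a dense subset of $\ell_\infty(A)/c_0(A)$, adding the conditions $\|x d_n-d_n x\|=0$ may produce an inconsistent type. What you actually need is that at every stage of the tree construction there is an element of $A'\cap\ell_\infty(A)/c_0(A)$ at positive distance from the current separable $A_s$, i.e., that the commutant of $A$ (indeed, of any separable subalgebra) in $\ell_\infty(A)/c_0(A)$ is nonseparable. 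The paper supplies precisely this missing ingredient via a one-line diagonal argument before invoking the proof of Theorem~\ref{T.Aut}. Once you insert that observation, your argument is complete and matches the paper's.
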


\begin{proof}
Assume $A$ is a nontrivial separable C*-algebra. 
The asymptotic sequence algebra $\ell_\infty(A)/c_0(A)$ is countably 
saturated by Theorem~\ref{T.MT.1}. 
Countable quantifier-free saturation of the central sequence algebra $A'\cap \ell_\infty(A)/c_0(A)$
follows by \cite[Lemma~2.4]{FaHa:Countable}. 
By Theorem~\ref{T.Aut} Continuum Hypothesis implies that the asymptotic sequence  algebra has $2^{\aleph_1}$ 
automorphisms.   
A diagonal argument shows that for a separable $B\subseteq \ell_\infty(A)/c_0(A)$ the relative commutant $B'\cap \ell_\infty(A)/c_0(A)$
is nonseparable. Then  the proof of Theorem~\ref{T.Aut}
shows that one can construct $2^{\aleph_1}$ of its automorphisms that 
pointwise fix $A$ and differ on its relative commutant, and therefore the central sequence algebra has $2^{\aleph_1}$ automorphisms. 

Finally, if $\cU$ is a nonprincipal ultrafilter then the ultrapower of $\ell_\infty(A)/c_0(A)$
is elementarily equivalent to itself and of cardinality $2^{\aleph_0}$. 
Since elementarily equivalent saturated structures of the same density character are 
isomorphic, this concludes the proof. 
  \end{proof} 
  
  We record a (very likely well-known) corollary. 
  
  \begin{prop} For unital separable C*-algebras $A$ and  $B$  and every 
  nonprincipal ultrafilter $\cU$ on $\omega$ the following are equivalent. 
  \begin{enumerate}
  \item there exists
  a unital *-homomorphisms of $B$ into the ultrapower~$A^{\cU}$. 
  \item There exists a unital *-homomorphism of $B$ into $\ell_\infty(A)/c_0(A)$. 
  \end{enumerate}
  \end{prop}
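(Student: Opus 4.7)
The implication (2)$\Rightarrow$(1) is immediate. Since $c_0(A)\subseteq c_\cU(A):=\{(x_n)\in \ell_\infty(A):\lim_\cU \|x_n\|=0\}$, the identity on $\ell_\infty(A)$ descends to a surjective unital *-homomorphism $\pi\colon \ell_\infty(A)/c_0(A)\twoheadrightarrow A^\cU$; composing any map as in (2) with $\pi$ yields one as in (1).

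For (1)$\Rightarrow$(2) the plan is to encode ``there exists a unital *-homomorphism $B\to C$'' as a countable type in the structure $C=\ell_\infty(A)/c_0(A)$, verify its consistency using (1), and then apply the countable saturation of $\ell_\infty(A)/c_0(A)$ guaranteed by Corollary~\ref{P1}. Fix a countable dense subset $\{b_n:n\in\omega\}$ of $B$ with $b_0=1_B$, chosen inside a fixed operator-norm ball (rescaling if necessary). In variables $\{x_n:n\in\omega\}$ of the appropriate ball-sort, let $\bft(\bar x)$ consist of $\|x_0-1\|=0$ together with the conditions
\[
\|p(x_{n_1},\dots,x_{n_k})\|\dminus \|p(b_{n_1},\dots,b_{n_k})\|^B=0
\]
for every *-polynomial $p$ with Gaussian rational coefficients in finitely many variables. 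This is a countable type, and any realization $(c_n)_{n\in\omega}$ in $\ell_\infty(A)/c_0(A)$ determines, by assigning $p(b_{n_1},\dots,b_{n_k})\mapsto p(c_{n_1},\dots,c_{n_k})$ on the dense Gaussian rational *-subalgebra of $B$ generated by $\{b_n\}$ and extending by norm-continuity, a unital *-homomorphism $B\to\ell_\infty(A)/c_0(A)$; well-definedness and norm-decreasingness are forced by the conditions of $\bft$, and unitality is forced by the clause $\|x_0-1\|=0$.

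To verify that $\bft$ is consistent in $\ell_\infty(A)/c_0(A)$, fix a unital *-homomorphism $\psi\colon B\to A^\cU$ from (1) and lift each $\psi(b_n)$ to a representing sequence $(a_{n,j})_{j\in\omega}\in\ell_\infty(A)$. Given a finite fragment of $\bft$ mentioning polynomials $p_1,\dots,p_m$ in variables $x_{n_1},\dots,x_{n_k}$ and a tolerance $\varepsilon>0$, the fact that $\psi$ is a *-homomorphism (hence norm-decreasing) gives $\lim_\cU \|p_\ell(a_{n_1,j},\dots,a_{n_k,j})\|_A\le \|p_\ell(b_{n_1},\dots,b_{n_k})\|^B$ for each $\ell\le m$. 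Hence the set $U:=\bigcap_{\ell\le m}\{j:\|p_\ell(a_{n_1,j},\dots,a_{n_k,j})\|_A<\|p_\ell(\bar b)\|^B+\varepsilon\}$ lies in $\cU$ and is therefore nonempty; picking any $j\in U$ and letting $y_i\in\ell_\infty(A)/c_0(A)$ be the class of the constant sequence $(a_{n_i,j})_k$ for $i\ge 1$ (and $y_0:=1$) produces an approximate realization of the fragment.

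Since $\bft$ is a countable consistent type in the countably saturated algebra $\ell_\infty(A)/c_0(A)$, it is realized, and the associated unital *-homomorphism $B\to\ell_\infty(A)/c_0(A)$ completes the proof. The only mildly delicate point is to phrase $\bft$ within a uniformly bounded sort so that the standard modulus-of-continuity conventions of \cite{FaHaSh:Model2} apply, and to observe that controlling norms on a countable dense set of *-polynomials suffices (by continuity) to extend a realization to a genuine unital *-homomorphism on all of $B$; both are routine.
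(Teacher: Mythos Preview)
Your proof is correct.

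The paper's proof is more abstract: it invokes the model-theoretic fact (from \cite{FaHaSh:Model2}) that a separable $B$ embeds into a countably saturated $C$ iff $\ThE(B)\subseteq\ThE(C)$, and then appeals to \L o\'s's theorem and to \cite{lopes2013reduced} to conclude that $A^{\cU}$ and $\ell_\infty(A)/c_0(A)$ share their existential theory with $A$; both directions then follow symmetrically. Your argument unpacks this. For (2)$\Rightarrow$(1) you use the evident quotient map $\ell_\infty(A)/c_0(A)\twoheadrightarrow A^{\cU}$, which is more direct than the paper's symmetric use of existential theories. For (1)$\Rightarrow$(2) you write down the relevant (one-sided) type explicitly and verify its finite satisfiability by hand via constant sequences, so you avoid the external citation to \cite{lopes2013reduced}. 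Your route is more elementary and self-contained; the paper's is shorter and places the result within the general embeddability criterion for countably saturated structures. Note also that your one-sided conditions $\|p(\bar x)\|\dminus\|p(\bar b)\|=0$ are exactly what is needed to produce a (possibly non-injective) unital $*$-homomorphism, whereas the paper's proof is phrased in terms of embeddings and tacitly relies on factoring through the kernel.

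One small technical remark: in the paper, countable saturation is formulated for $n$-types (finitely many free variables), while your $\bft$ uses the countably many variables $\{x_n:n\in\omega\}$. Passing from the former to the latter is standard---realize the type variable by variable, at each stage adding to the $1$-type the conditions $\inf_{x_{n+1},\dots,x_N}\max_{\phi\in F}|\phi-r_\phi|\leq\varepsilon$ that guarantee approximate realizability of each finite fragment $F$ of the remaining conditions---but strictly speaking this step should be mentioned.
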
 
 
 \begin{proof} Recall that for a countably saturated structure $C$
 and a separable structure   $B$ of the same language we have that $B$ embeds  into $C$ 
 if and only if the \emph{existential theory} of $B$, 
 \[
 \ThE(B)=\{\inf_{\bar x}\phi(\bar x): \inf_{\bar x}\phi(\bar x)^B=0
 \text{ and $\phi$ is quantifier-free}\}
 \]
 is included in $\ThE(C)$ (\cite{FaHaSh:Model2}). 
Both $A^{\cU}$ and $\ell_\infty(A)/c_0(A)$ are countably saturated. 
 Since $A$ and $A^{\cU}$ have the same theory by \L os's theorem
 and $A$ and 
  $\ell_\infty(A)/c_0(A)$ have the same existential theory  
by \cite{lopes2013reduced} the conclusion follows. 
\end{proof}

  A tentative  definition of a trivial automorphism of a corona of a separable C*-algebra
   was given 
  in \cite{CoFa:Automorphisms}. Every inner automorphism is trivial and there 
  can be at most $2^{\aleph_0}$ trivial automorphisms; this is all information 
  that we need for the following corollary. 

  \begin{cor} The assertion that all automorphisms of  the algebra
  $\prod_n M_n(\bbC)/\bigoplus_n M_n(\bbC)$
  are trivial is independent from ZFC. 
  \end{cor}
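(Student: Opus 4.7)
The strategy is to establish both halves of the independence separately: under CH, produce nontrivial automorphisms by a cardinality count against the saturation established earlier; under PFA, invoke the rigidity results for reduced products of matrix algebras already referenced in the introduction.

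For the CH direction, first observe that $\prod_n M_n(\bbC)/\bigoplus_n M_n(\bbC)$ is a reduced product over the Fr\'echet ideal of the sequence of C*-algebras $A_n=M_n(\bbC)$, so by Theorem~\ref{T.MT.1} it is countably saturated. Under CH its density character equals $2^{\aleph_0}=\aleph_1$, so this countable saturation upgrades to full saturation in the sense of \S\ref{S.Countable}. Theorem~\ref{T.Aut} then supplies $2^{\aleph_1}$ automorphisms. As recalled just before the corollary, the set of trivial automorphisms of a corona of a separable C*-algebra has cardinality at most $2^{\aleph_0}=\aleph_1$, which under CH is strictly less than $2^{\aleph_1}$. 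Hence a nontrivial automorphism must exist in this model of ZFC.

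For the PFA direction, I would appeal to the rigidity result for reduced products of matrix algebras obtained in \cite{McK:Reduced} (see also the discussion at the beginning of \S\ref{S.Continuous.Field} and the related results of \cite{Fa:All} and \cite{Gha:FDD}), which shows that PFA implies all automorphisms of $\prod_n M_n(\bbC)/\bigoplus_n M_n(\bbC)$ are trivial. Since PFA is consistent relative to a supercompact cardinal, this furnishes a model of ZFC in which every automorphism of the algebra is trivial.

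The two halves together show that the statement of the corollary is independent from ZFC. The genuinely substantive content lies in the PFA direction, which is imported wholesale from \cite{McK:Reduced}; the CH direction is an immediate corollary of the machinery developed in this paper, namely the combination of Theorem~\ref{T.MT.1} (countable saturation of the Fr\'echet reduced product) and Theorem~\ref{T.Aut} ($2^\kappa$ automorphisms for saturated structures of density character $\kappa$), together with the elementary observation that trivial automorphisms form a set of size at most $2^{\aleph_0}$. No further obstacle is expected beyond correctly attributing the PFA half.
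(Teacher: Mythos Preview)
Your approach is essentially the same as the paper's: Theorem~\ref{T.MT.1} plus Theorem~\ref{T.Aut} for the CH half, and an external rigidity result for the other half. The one issue is attribution. The paper cites \cite{Gha:FDD} for the consistency of ``all automorphisms are trivial,'' not \cite{McK:Reduced}; McKenney's paper treats reduced products of UHF algebras, whereas here the factors are the full matrix algebras $M_n(\bbC)$, and Ghasemi's result is the one that applies. You do mention \cite{Gha:FDD} parenthetically, so this is a misattribution rather than a gap, but the primary citation should be swapped.
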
 
  
  \begin{proof} 
  Relative consistency  of this assertion  was proved in  \cite{Gha:FDD}.  
  Continuum Hypothesis implies  that the algebra has $2^{\aleph_1}$ automorphisms by Theorem~\ref{T.MT.1} and Theorem~\ref{T.Aut}. 
  \end{proof}

\section{The influence of forcing axioms} 
\label{S.W} 
\label{S.Rep}
In this section, if $f$ is a function and $X$ is a subset of its domain we write
\[
f[X]=\{f(x): x\in X\}. . 
\]
Let $X$ be a topological space. 
 Let $\cF_X$ and $\cK_X$ denote the lattice of closed subsets of $X$ and 
its ideal of compact sets, respectively. For $F\in \cF_X$ let 
\[
F^*=\overline F\setminus X, 
\]
where closure is taken in $\beta X$. 
Fix a homeomorphism $\Phi\colon X^*\to Y^*$.
If there is a function
$\Psi\colon \cF_X\to \cF_Y$ such that $\Phi[F^*]=\Psi(F)^*$ for all $F\in \cF_X$ then we  
 say that $\Psi$ is a \emph{representation} of $\Phi$.

This is a very weak assumption since  $\Phi$ is just an arbitrary function with this property. 
Nevertheless, 
Continuum Hypothesis implies that there exists a homeomorphism between 
Stone--\v{C}ech remainders of locally compact Polish spaces with no representation 
(see \S\ref{S.Norep}). 

  If $\Phi$ is trivial and $f\colon \beta X\to \beta Y$ is its continuous extension then $\Psi(F)=f(F)$
  defines a  representation of $\Psi$.  
Since points of $\beta X$ are maximal filters  of closed subsets of $X$, $\Psi$ is uniquely determined 
by its representation. 

We shall show (Theorem~\ref{T1}) that PFA implies every homeomorphism $\Phi$ between Stone--\v{C}ech remainders of locally compact Polish spaces such that both $\Phi$ and its inverse have a representation is trivial. 

In the remainder of this section 
we work in ZFC and prove the following. 

\begin{lemma}\label{L-1} Every autohomeomorphism of $\bbH^*$ has a representation. 
\end{lemma}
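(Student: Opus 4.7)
For each closed $F \subseteq \bbH$ I must exhibit a closed $\Psi(F) \subseteq \bbH$ with $\Phi[F^*] = \Psi(F)^*$. The easy extremes are handled at once: if $F$ is compact then $F^* = \emptyset$ and I take $\Psi(F) = \emptyset$; if $F \supseteq [a,1)$ for some $a<1$ then $F^* = \bbH^*$ and I take $\Psi(F) = \bbH$. The substance of the lemma lies in the intermediate regime, in which $\bbH \setminus F$ has infinitely many open-interval components accumulating at $1$.

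My plan is to identify an intrinsic topological characterization of the sub-family $\{F^* : F \in \cF_\bbH\}$ inside $\cF_{\bbH^*}$, so that its invariance under $\Phi$ becomes automatic. Since $\bbH$ is locally compact, locally connected, and second countable, each closed $F \subseteq \bbH$ is determined (up to a compact modification irrelevant for $F^*$) by the countable disjoint family of open intervals constituting $\bbH \setminus F$, and correspondingly $\bbH^* \setminus F^* = (\beta\bbH \setminus \overline{F}^{\beta\bbH}) \cap \bbH^*$ arises as the trace on $\bbH^*$ of an open subset of $\beta\bbH$ whose restriction to $\bbH$ is itself open. I would attempt to show that a closed $C \subseteq \bbH^*$ is of the form $F^*$ precisely when it admits a countable decreasing neighborhood basis in $\bbH^*$ consisting of sets $W_n \cap \bbH^*$ with each $W_n$ open in $\beta\bbH$ and $W_n \cap \bbH$ open in $\bbH$. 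Since such a criterion refers only to the pair $(\beta\bbH, \bbH^*)$, it is preserved when $C$ is replaced by $\Phi[C]$ for an autohomeomorphism $\Phi$ of $\bbH^*$. Granted the characterization, $\Psi(F)$ is then assembled from a nested family of open sets in $\beta\bbH$ witnessing that $\Phi[F^*]$ is of this form, by taking the union of their $\bbH$-traces and complementing.

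The main obstacle is the intrinsic characterization itself. Generic closed subsets of $\bbH^*$ are \emph{not} of the form $F^*$: for instance, a singleton $\{p\}$ at a non-$P$-point typically is not; so a nontrivial topological criterion must be isolated. I expect the correct criterion to rely on a countable-base argument exploiting the exhaustion $\bbH = \bigcup_n K_n$ by the compacts $K_n = [0, 1 - 1/n]$, perhaps combined with the Gelfand--Naimark duality $C(\bbH^*) \cong C_b(\bbH)/C_0(\bbH)$ so that topological properties on $\bbH^*$ can be translated into algebraic properties of this quotient C*-algebra, where intrinsic formulations are easier to verify and to check for $\Phi$-invariance. The delicate point is that, unlike in the zero-dimensional case, no clopen decomposition of $\bbH^*$ is available, so the countable base must be constructed using continuous functional calculus rather than projections.
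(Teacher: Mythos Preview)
Your overall strategy---find a characterization of the family $\{F^*: F\in\cF_{\bbH}\}$ that is invariant under autohomeomorphisms of $\bbH^*$---is exactly the paper's strategy. But the specific criterion you propose does not work, and the reason is the heart of the matter.

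You suggest characterizing $C=F^*$ by the existence of a countable neighbourhood base of sets $W_n\cap\bbH^*$ with $W_n$ open in $\beta\bbH$, and then claim that ``since such a criterion refers only to the pair $(\beta\bbH,\bbH^*)$, it is preserved when $C$ is replaced by $\Phi[C]$.'' This inference is the gap. An autohomeomorphism $\Phi$ of $\bbH^*$ acts on $\bbH^*$ alone; it need not extend to an autohomeomorphism of $\beta\bbH$, and indeed a nontrivial $\Phi$ \emph{cannot} so extend (that is precisely what ``nontrivial'' means). Any criterion that mentions open subsets of $\beta\bbH$ is therefore not a priori $\Phi$-invariant. (Separately, the clause ``$W_n\cap\bbH$ open in $\bbH$'' is vacuous: every open subset of $\beta\bbH$ restricts to an open subset of the subspace $\bbH$.) Your instinct to reach for Gelfand--Naimark and functional calculus is also beside the point here; what is needed is a criterion phrased purely in the topology of $\bbH^*$.

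The paper obtains such a criterion from the \emph{connectedness} of $\bbH$. Removing a nontrivial closed set $a$ from $\bbH$ disconnects it into infinitely many open intervals, and this disconnection survives in the remainder: $\bbH^*\setminus a^*$ is disconnected. Conversely, if $\bbH^*\setminus F$ is disconnected, one can pull the two pieces back to disjoint open subsets of $\beta\bbH$ whose complement in $\bbH$ is a nontrivial closed $a$ with $a^*\subseteq F$. Iterating this over all nonempty relatively open $W\subseteq F$ gives Lemma~\ref{L4}: $F=a^*$ for some nontrivial closed $a\subseteq\bbH$ if and only if $\bbH^*\setminus W$ is disconnected for every such $W$. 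This condition speaks only of $\bbH^*$, so it is manifestly preserved by any autohomeomorphism, and Lemma~\ref{L-1} follows immediately.
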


In the following lemma we say that a closed subset $a$ of $\bbH$ is \emph{nontrivial} 
if neither $a$ nor $\bbH\setminus a$ is relatively compact.

\begin{lemma} \label{L4} Assume $F$ is a closed subset of $\bbH^*$. 
\begin{enumerate}
\item $F$ includes $a^*$ for some
 nontrivial closed $a\subseteq \bbH$ if and only if $\bbH^*\setminus F$ is 
disconnected. 
\item $F=a^*$ for a nontrivial $a\subseteq\bbH$ if and only if $\bbH^*\setminus W$ is 
disconnected for every nonempty relatively open $W\subseteq F$. 
\end{enumerate}
\end{lemma}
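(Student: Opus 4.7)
My plan is to prove the two parts of Lemma~\ref{L4} in sequence, with part~(1) being the main technical statement; the principal obstacle will appear in direction~$(\Rightarrow)$ of part~(1).

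For part~(1), direction~$(\Leftarrow)$, let $a\subseteq\bbH$ be closed and nontrivial with $a^*\subseteq F$. Since $a$ is unbounded, the components of $\bbH\setminus a$ are bounded open intervals, and since $\bbH\setminus a$ is unbounded there are infinitely many of them with right endpoints tending to infinity. I partition these components into two infinite sub-families with unbounded unions $V_1, V_2$, and set $C_i:=\overline{V_i}^{\bbH}$, $P:=C_1^*$, $N:=C_2^*$ in $\bbH^*$. Using the $z$-ultrafilter description of $\beta\bbH$, I verify that (i)~$P\cap N\subseteq a^*$, because any $z$-ultrafilter containing both $C_1$ and $C_2$ must contain $C_1\cap C_2\subseteq a$, hence contains $a$; and (ii)~$\bbH^*\setminus(P\cup N)\subseteq a^*$, because $\bbH\setminus(C_1\cup C_2)\subseteq\mathrm{int}_\bbH a$. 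These together give a clopen partition $\bbH^*\setminus a^* = (P\setminus a^*)\sqcup(N\setminus a^*)$ into two nonempty pieces (nonemptiness witnessed by ultra-limits of sequences in $V_i$ at positive distance from $a$), and $\bbH^*\setminus F\subseteq\bbH^*\setminus a^*$ inherits this disconnection. When $F$ is large enough to absorb one of $P$ or $N$, I refine the partition of $\bbH\setminus a$ or replace $a$ with another $a'$ satisfying $(a')^*\subseteq F$ whose natural partition avoids the absorption.

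For part~(1), direction~$(\Rightarrow)$, given $\bbH^*\setminus F = U_1\sqcup U_2$, I use regularity of $\bbH^*$ to choose open $W_i\subseteq U_i$ with $\overline{W_1}\cap\overline{W_2}=\emptyset$. Urysohn's lemma on $\bbH^*$ together with Tietze extension to $\beta\bbH$ produces a bounded continuous $h\colon\bbH\to[0,1]$ with $\beta h=0$ on $\overline{W_1}$ and $\beta h=1$ on $\overline{W_2}$; set $a:=h^{-1}([1/3,2/3])$. Connectedness of $\bbH^*$ combined with the intermediate value principle for $\beta h$ makes both $a$ and $\bbH\setminus a$ unbounded, so $a$ is nontrivial. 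The key step, and the main obstacle, is to ensure $a^*\subseteq F$, equivalently $\beta h\leq 1/3$ on all of $U_1$ and $\beta h\geq 2/3$ on all of $U_2$ rather than merely on $\overline{W_i}$. I plan to achieve this by iterating the Urysohn/Tietze construction over an exhaustion of each $U_i$ by closed subsets, and combining the resulting functions into a limit $h$ whose Stone--\v{C}ech extension takes the required values on all of $U_1\cup U_2$.

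For part~(2), both directions reduce to~(1). In~$(\Rightarrow)$, with $F=a^*$ and a nonempty relatively open $W\subseteq F$, the closed set $F\setminus W$ is a proper closed subset of $a^*$ and corresponds to $(a')^*$ for a nontrivial proper $a'\subsetneq a$; combining the disconnection of $\bbH^*\setminus a^*$ obtained from~(1)$(\Leftarrow)$ with the structure of $W\subseteq a^*$ yields a clopen partition of $\bbH^*\setminus W$. In~$(\Leftarrow)$, applying~(1)$(\Rightarrow)$ to each nonempty relatively open $W\subseteq F$ produces some $a_W^*\subseteq W$, and a minimality argument over this family forces $F=a^*$ for a single nontrivial~$a$.
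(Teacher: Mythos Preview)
Your overall strategy for part~(1) is in the right spirit, but your approach to the direction ``$\bbH^*\setminus F$ disconnected $\Rightarrow$ some $a^*\subseteq F$'' is substantially more complicated than necessary, and the obstacle you flag is self-inflicted. The paper bypasses Urysohn and Tietze entirely: given the clopen partition $\bbH^*\setminus F = U\sqcup V$, it simply chooses open sets $\tilde U,\tilde V\subseteq\beta\bbH$ with $\tilde U\cap\bbH^*=U$ and $\tilde V\cap\bbH^*=V$ (possible since $\bbH^*$ is closed in $\beta\bbH$), arranges $\tilde U\cap\tilde V=\emptyset$, and sets $a=\bbH\setminus(\tilde U\cup\tilde V)$. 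Then $a$ is closed; any $p\in a^*$ lies in neither $\tilde U$ nor $\tilde V$ (since these are open and disjoint from $a$), hence $p\notin U\cup V$, so $a^*\subseteq F$. Nontriviality of $a$ follows from connectedness of $\bbH$ and nonemptiness of $U,V$. No iteration and no limiting process is needed, and there is no issue of extending the separating function to all of $U_1\cup U_2$ because no separating function is ever built.

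Your plan for part~(2) contains genuine gaps. For~$(\Rightarrow)$ you assert that the relatively closed set $F\setminus W$ ``corresponds to $(a')^*$ for a nontrivial proper $a'\subsetneq a$,'' but closed subsets of $a^*$ (equivalently of $\bbH^*$) are \emph{not} in general of the form $c^*$ for closed $c\subseteq\bbH$; this is exactly what the lemma is designed to detect. For~$(\Leftarrow)$ you say that applying part~(1) to each relatively open $W\subseteq F$ ``produces some $a_W^*\subseteq W$,'' but part~(1) yields $a^*$ inside a \emph{closed} set whose complement is disconnected, and $W$ is not closed in $\bbH^*$. The paper's argument for~(2)$(\Leftarrow)$ instead uses the \emph{specific} $a$ built from the disconnection of $\bbH^*\setminus F$ via $\tilde U,\tilde V$ as above, supposes $W=F\setminus a^*\neq\emptyset$, passes to a closed $G\subseteq W$ with $\bbH^*\setminus G$ disconnected, obtains $b^*\subseteq G$ by part~(1), and derives a contradiction from $b^*\cap a^*=\emptyset$ together with the fact that $b$ must then meet $\tilde U\cup\tilde V$ unboundedly. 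The point is that the explicit description $a=\bbH\setminus(\tilde U\cup\tilde V)$ is what makes the contradiction go through; your Urysohn-based $a$ would not give you this leverage.
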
 

\begin{proof} Let us write $X=\bbH$.
(1) If $F\subseteq X^*$ is a nontrivial closed set then its complement is a union of infinitely 
many nonempty disjoint open sets. Pick a sequence $V_n$, for $n\in \omega$, 
of these sets such that $\lim_n \min V_n=1$. Now let $W_1=\bigcup_n V_{2n}$ and 
$W_2=X\setminus (F\cup W_1)$. Then $W_2$ is open, being a union of open intervals, 
and neither  $W_1$ nor $W_2$ is included in a compact subset of $\bbH$. 
Hence $X^*\setminus F$ is partitioned into two disjoint open sets corresponding to $W_1$ and $W_2$.

Now we  show the converse
implication. 
Let $U$ and $V$ be disjoint open subsets of $X^*$
such that $U\cup V=X^*\setminus F$. 
If $\tilde U$ and $\tilde V$ are open subsets of $\beta X$ such that 
$U=\tilde U\cap X^*$ and $V=\tilde V\cap X^*$ then $\tilde U\cap\tilde V$ is included in a 
compact set, and we can assume it is empty. Since $X$ is connected 
 $a=X\setminus (\tilde U\cup \tilde V)$ is nonempty and moreover it is not 
compact. Since both $U$ and $V$ are nonempty the complement of $a$
is not included in a compact set.  
Therefore $a$ is a nontrivial  closed  set such that $a^*\subseteq F$. 

(2) By (1), only the converse implication requires a proof. 
Let $a=X\setminus (\tilde U\cap \tilde V)$
 be the nontrivial closed subset of $X$ constructed in (1) such that $a^*\subseteq F$. 
We claim that $a^*=F$. Assume, for the sake of obtaining a contradiction, 
 that $W=F\setminus a^*$ is nonempty. 
 By the assumption, $X^*\setminus W$ is disconnected. By compactness 
 let $G\subseteq W$ be a closed set 
 such that $X^*\setminus G$ is disconnected. By (1) fix a nontrivial closed  $b\subseteq X$
 such that $b^*\subseteq G$. Then $a^*\cap b^*$ is empty and therefore $a\cap b$ is compact.  
 Therefore $b$ has noncompact intersection with  
 $\tilde U\cup \tilde V$ used in (1), and therefore $b^*$ has a nonempty intersection with $U\cup V$. 
   But this is absurd since $U\cap  V $ is disjoint from $F$. 
 \end{proof} 

\begin{proof}[Proof of Lemma~\ref{L-1}]
Let $\Phi\colon \bbH^*\to \bbH^*$ be a homeomorphism. 
Fix a closed $a\subseteq \bbH$. We need to find closed $b\subseteq\bbH$ such that 
$\Phi(a^*)=b^*$. If $a^*=\bbH^*$ or $a^*=\emptyset$ then this is easy. 
Otherwise, if both $a^*$ and $\bbH^*\setminus a^*$ are nonempty, then  
Lemma~\ref{L4} implies that $\bbH^*\setminus W$ is disconnected 
for every nonempty relatively open $W\subseteq a^*$. 
Therefore $\bbH^*\setminus V$ is disconnected for every nonempty relatively open $V\subseteq \Phi(a^*)$ and by Lemma~\ref{L4} we have a closed $b\subseteq \bbH$ such that $b^*=\Phi(a^*)$. 
\end{proof}


The following result  together 
with Lemma~\ref{L-1} implies Theorem~\ref{T0}. 

\begin{thm}[PFA] \label{T1} 
If $X$ and $Y$ are locally compact, separable, metrizable spaces
then every homeomorphism $\Phi\colon X^*\to  Y^*$ that has a representation is trivial. 
\end{thm}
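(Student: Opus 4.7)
The plan is to use the representation $\Psi$ together with consequences of PFA, specifically the Open Coloring Axiom (OCA) and $\mathrm{MA}_{\aleph_1}$, to construct a compact $K\subseteq X$ and a continuous $f\colon X\setminus K\to Y$ with $\beta f\rs X^* = \Phi$. First, I would fix compact exhaustions $X = \bigcup_n K_n$ and $Y = \bigcup_n L_n$ with $K_n\subseteq \mathrm{int}(K_{n+1})$, available since the spaces are locally compact, separable, and metrizable. Since $\Psi$ sends compact sets to compact sets ($K^* = \emptyset$ forces $\Psi(K)^* = \emptyset$) and $(F\cup G)^* = F^*\cup G^*$, the map $\Psi$ descends to a well-defined homomorphism $\bar\Psi\colon \mathcal F_X/\mathcal K_X \to \mathcal F_Y/\mathcal K_Y$ with respect to union. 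Using the parallel representation of $\Phi^{-1}$ (provided by the same hypothesis applied to the inverse, as in the prose preceding the theorem), $\bar\Psi$ is in fact a lattice isomorphism.

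Next, I would extract candidate pointwise data for $f$. For each $x\in X\setminus K_0$ and each $k\geq 1$, choose a compact neighborhood $N_k(x)$ with $\diam(N_k(x))<1/k$, and consider the decreasing sequence $C_k(x) := \Psi(N_k(x))\subseteq Y$, well-defined modulo compact. The goal is to show that, after discarding a sufficiently large compact set $K$, $\bigcap_k C_k(x)$ modulo compact collapses to a single point $f(x)\in Y$, and that the resulting function is continuous. I would apply OCA by coloring pairs $\{x,x'\}$ of distinct points in $X\setminus K_n$ according to whether their neighborhood systems $\{N_k(x)\}$ and $\{N_k(x')\}$ are separated by $\Psi$ in an incoherent way, arranging that an uncountable $0$-homogeneous set contradicts the isomorphism property of $\bar\Psi$ (many points mapping to overlapping regions would violate injectivity of $\Phi$), while a countable $1$-homogeneous decomposition provides the continuous selector off a compact set. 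Then $\mathrm{MA}_{\aleph_1}$ would be used to glue local selectors on overlapping compact pieces into a single continuous $f\colon X\setminus K\to Y$. The identity $\beta f\rs X^* = \Phi$ would follow via the representation property from the equality $\Psi(F) = \overline{f[F\setminus K]}$ modulo compact for all closed $F\subseteq X$, which is built into the construction.

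The main obstacle, and the reason the introduction flags a novel technique, is the OCA step. Previous PFA-based rigidity results for Stone--\v{C}ech remainders proceed by presenting $X^*$ as an inverse limit of copies of $\omega^*$, reducing the problem to lifting Boolean algebra homomorphisms on $\mathcal P(\omega)/\Fin$, where OCA-based liftings are standard. Here the lattice $\mathcal F_X/\mathcal K_X$ is not Boolean (closed sets in a connected space lack useful complements) and $X$ need not carry an abundance of clopen sets, so that route is blocked. The novelty must consist in extracting continuous pointwise data directly from the lattice-theoretic datum of $\Psi$, presumably through an open coloring defined in terms of the geometric arrangement of shrinking closed neighborhoods in $X$ and their $\Psi$-images in $Y$, and exploiting that $\bar\Psi$ is an honest lattice isomorphism to rule out bad homogeneous sets. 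Because OCA and $\mathrm{MA}_{\aleph_1}$ are provable from PFA and their consistency does not rely on large cardinals, this strategy would match the paper's remark that the proof uses only such consequences of PFA.
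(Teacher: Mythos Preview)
Your plan has a fundamental gap at the very first step of extracting pointwise data. You propose to define $C_k(x):=\Psi(N_k(x))$ for a compact neighbourhood $N_k(x)$ of $x$, but a representation $\Psi$ is only constrained by the requirement $\Psi(F)^*=\Phi[F^*]$. When $N_k(x)$ is compact we have $N_k(x)^*=\emptyset$, so $\Psi(N_k(x))$ can be an arbitrary compact subset of $Y$; the values $C_k(x)$ carry no information whatsoever about $\Phi$. Since every bounded closed neighbourhood of a single point is compact, there is no way to localize $\Phi$ at individual points directly through $\Psi$. Consequently the open coloring you describe, which is phrased in terms of how $\Psi$ treats these shrinking compact neighbourhoods, has no content, and neither the ruling-out of a $0$-homogeneous set nor the gluing via $\mathrm{MA}_{\aleph_1}$ can get off the ground.

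The paper sidesteps this obstruction by never attempting to evaluate $\Psi$ on compact sets. The basic objects are the infinite closed discrete subsets $a\in\calD(X)$; for such $a$ the remainder $a^*$ is a copy of $\omega^*$, so the representation produces a closed $b\subseteq Y$ with $\Phi[a^*]=b^*$, and known PFA results (Dow--Hart on continuous images of $\omega^*$, and Shelah--Stepr\=ans/Veli\v{c}kovi\'c on triviality of autohomeomorphisms of $\omega^*$) give a map $h_a\colon a\to Y$ with $\beta h_a\rs a^*=\Phi\rs a^*$. The open coloring argument is then applied not to pairs of points but to pairs $\{a,b\}\subseteq\calD(X_0)$ for a fixed countable $X_0$, coloring according to whether $h_a$ and $h_b$ disagree beyond $K_n$; a putative uncountable homogeneous family is killed using $\mathrm{MA}_{\aleph_1}$ to find a single discrete set almost containing all of them, and the good alternative yields a single function $h^{X_0}$ on a cocompact part of $X_0$. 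Continuity and the extension from a countable dense $X_0$ to all of $X$ are handled in separate lemmas. Thus the novelty the introduction alludes to is precisely this reduction to closed discrete sets, which lets one import the zero-dimensional PFA machinery even though $X$ itself may be connected; your proposal to work pointwise with compact neighbourhoods does not reach that reduction.
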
 

\subsection{Proof
of   Theorem~\ref{T1}}
Fix $X, Y$ and $\Phi$ as in Theorem~\ref{T1}. 
Fix compact sets $K_n$ for $n\in \omega$ such that $X=\bigcup_n K_n$ and 
$K_n$ is included in the interior of $K_{n+1}$ for all $n$.  
For a topological space $Z$ let  
\[
\calD(Z)=\{a\subseteq Z: a\text{ is infinite, closed, and  discrete}\}. 
\]
We start with a straightforward application of Martin's Axiom 
(write $a\subseteq^*b$ if $a\setminus b$ is finite).   

\begin{lemma}[MA$_\kappa$] 
\label{L0} 
If  $X_0\subseteq X$ is countable.  
 and  $A\subseteq \calD(X_0)$ has cardinality $\kappa$ 
then there exists $a\in \calD(X_0)$ such that $b\subseteq^* a$ for all $b\in A$. 
\end{lemma}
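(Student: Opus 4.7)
The plan is to apply Martin's Axiom to a $\sigma$-centered forcing $\mathbb{P}$ whose generic filter produces $a$ as the union of the first coordinates of its conditions. The tension to resolve is that $a$ must contain almost every element of every $b\in A$ while remaining closed and discrete in $X$. Using the fixed compact exhaustion $X=\bigcup_n K_n$ with $K_n\subseteq\mathrm{int}(K_{n+1})$, membership in $\calD(X_0)$ is equivalent to $a\cap K_n$ being finite for every $n$, and each $b\in\calD(X_0)$ in particular has $b\cap K_n$ finite, which is what makes the forcing combinatorics tractable.

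Take $\mathbb{P}$ to be the set of triples $(s,F,n)$ with $s\in[X_0]^{<\omega}$, $F\in[A]^{<\omega}$, and $n\in\omega$, ordered by $(s',F',n')\leq(s,F,n)$ iff $s\subseteq s'$, $F\subseteq F'$, $n\leq n'$, $s'\cap K_n=s\cap K_n$, and $s'\supseteq s\cup\bigcup_{b\in F}(b\cap K_{n'}\setminus K_n)$. The intended meaning of $(s,F,n)$ is that the generic $a$ extends $s$, agrees with $s$ on $K_n$ (freezing that portion of $a$), and contains the tail $b\setminus K_n$ for every $b\in F$. Any two conditions sharing the same pair $(s,n)$ admit the common extension $(s,F_1\cup F_2,n)$, since the freeze and inclusion clauses then become trivial; as $[X_0]^{<\omega}\times\omega$ is countable, $\mathbb{P}$ is $\sigma$-centered and in particular ccc.

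I would then introduce two families of dense sets: $E_b=\{(s,F,n):b\in F\}$ for $b\in A$, dense via the extension $(s,F\cup\{b\},n)$; and $D_m=\{(s,F,n):n\geq m\}$ for $m\in\omega$, dense via the extension $(s\cup\bigcup_{b\in F}(b\cap K_m\setminus K_{n_0}),F,m)$, which is legitimate because each $b\cap K_m$ is finite and the added points lie in $K_m\setminus K_{n_0}$, so they do not violate $s'\cap K_{n_0}=s\cap K_{n_0}$. These total $|A|+\aleph_0=\kappa$ dense sets, so $\mathrm{MA}_\kappa$ yields a filter $G$ meeting all of them, and we set $a=\bigcup\{s:(s,F,n)\in G\}$. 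Choosing $(s_0,F_0,n_0)\in G\cap D_m$ and using filter-compatibility forces $a\cap K_{n_0}=s_0\cap K_{n_0}$, hence $a\cap K_m$ is finite, so $a\in\calD(X_0)$. Choosing $(s_0,F_0,n_0)\in G\cap E_b$ and, for each $x\in b\setminus K_{n_0}$, a common extension in $G$ with a witness of $D_m$ for some $m$ with $x\in K_m$, produces a condition $(s_2,F_2,n_2)\in G$ with $s_2\supseteq b\cap K_{n_2}\setminus K_{n_0}\ni x$, so $b\setminus a\subseteq b\cap K_{n_0}$ is finite.

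The main point to check carefully is transitivity of the order, specifically the compatibility between the freeze clause $s'\cap K_n=s\cap K_n$ propagating upward and the inclusion clause $s'\supseteq\bigcup_{b\in F}(b\cap K_{n'}\setminus K_n)$ telescoping along chains; these interact well because the newly added tail elements always lie outside $K_n$. Also implicit is the reading that $\calD(X_0)$ refers to closed-discrete subsets of the ambient $X$, which is what validates the reformulation ``$a\cap K_n$ finite for every $n$'' and makes the finiteness of each $b\cap K_m$ available when verifying density of $D_m$.
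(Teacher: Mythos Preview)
Your argument is correct and follows essentially the same Mathias--Prikry style approach as the paper: a $\sigma$-centered poset whose conditions carry a finite working part, an index into the compact exhaustion, and a finite set of side conditions from $A$, with the generic set read off as the union of working parts. The only cosmetic difference is that the paper requires $s\subseteq K_k$ and takes the extension relation to be the equality $s=t\cup\bigl(\bigcup B\cap(K_k\setminus K_l)\bigr)$, whereas you allow arbitrary finite $s\subseteq X_0$ together with a separate freeze clause; your formulation is slightly more flexible but the content is identical, and you are rather more explicit than the paper about verifying transitivity, density, and the properties of $a$.
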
 

\begin{proof} 
Let $\cA$ be a subset of $\calD(X_0)$ of cardinality $\kappa$. Define poset $\bbP$ as follows. 
It has conditions of the form $(s,k,A)$ where 
 $k\in \omega$, $s\subseteq K_k$ is finite,  and  $A\subseteq \cA$ is finite. 
We let  $(s,k,A)$ extend  $(t,l,B)$ if
 $s=t\cup \left(\bigcup B\cap (K_k\setminus K_l)\right)$. 
Conditions with the same working part $(s,k)$ are clearly compatible. 
Since $X_0$ is countable  $\bbP$ 
is $\sigma$-centered. 
For every $a\in \cA$ the set of all $(s,k,A)$ such that $a\in A$ is 
dense and for every $n\in \omega$ the set of  $(s,k,A)$ such that $k\geq n$ is dense. 
 If $G$ is a filter intersecting these dense sets then  $a_G=\bigcup_{(s,k,A)\in G} s$
is as required. 
\end{proof}

\begin{lemma}[PFA] \label{L3} Assume $\Phi\colon X^*\to Y^*$ is as in Theorem~\ref{T1}. 
If $a\in \calD(X)$ then there is a map 
$h_a\colon a\to Y$ such that 
$\Phi(x)=(\beta h_a) (x)$
for every $x\in a^*$ and $b=h_a[a]$ is in $\calD(Y)$. 
Equivalently, for every $b\subseteq a$ we have 
\[
\Phi(b^*)=h_a[b]^*.
\] 
\end{lemma}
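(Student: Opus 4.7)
The plan is to reduce Lemma~\ref{L3} to the PFA-based triviality results for homeomorphisms between Stone--\v{C}ech remainders of countable locally compact Polish spaces (\cite[Theorem~4.10.1]{Fa:AQ}, \cite{FaMcK:Homeomorphisms}) by exploiting the representation~$\Psi$. The reduction has three stages: extract a Boolean algebra structure on $\cP(a)$ from $\Psi$, produce a countable closed discrete $c\subseteq Y$ with $c^{*}=\Phi(a^{*})$, and apply the cited rigidity result to $\Phi\rs a^{*}$.

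For the first stage, since $a\in\calD(X)$, every subset of $a$ is already closed in $X$, so $\Psi$ restricts to a map $\cP(a)\to\cF_{Y}$ satisfying $\Phi(b^{*})=\Psi(b)^{*}$ for all $b\subseteq a$. Because $\Phi$ is a homeomorphism, disjoint $b_{1},b_{2}\subseteq a$ give $\Psi(b_{1})^{*}\cap\Psi(b_{2})^{*}=\Phi(b_{1}^{*}\cap b_{2}^{*})=\emptyset$, so $\Psi(b_{1})\cap\Psi(b_{2})\in\cK_{Y}$. This yields a Boolean embedding $\cP(a)/\Fin\hookrightarrow\cF_{Y}/\cK_{Y}$, and dually (using a representation of $\Phi^{-1}$, which exists by Lemma~\ref{L-1} when $X=Y=\bbH$ and by hypothesis in Theorem~\ref{T1}) one obtains a map $\cF_{Y}\to\cP(a)/\Fin$. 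Since $Y$ is Polish with a countable base of closed sets, all relevant data is encoded in countably many subsets of~$a$.

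For the second stage, the crucial topological observation is that $\Phi(a^{*})=\Psi(a)^{*}$ is homeomorphic to $a^{*}\cong\omega^{*}$, so it is a zero-dimensional compact Hausdorff space of weight~$\fc$. I plan to exploit this zero-dimensionality to exhibit, inside $\Psi(a)$, a cofinal (modulo $\cK_{Y}$) family of pairwise disjoint compact \emph{clopen} pieces $\{C_{k}\}_{k\in\omega}$: lifting a countable partition of $\omega$ into finite intervals through the Boolean embedding of stage one, the corresponding $\Psi(b)$'s give compact subsets of $Y$ that can be arranged to be clopen in $\Psi(a)$ and to escape every compact subset of $Y$. Choosing one point per piece and applying Lemma~\ref{L0} (MA$_{\aleph_{1}}$, available from PFA) produces $c\in\calD(Y)$ with $c^{*}=\Psi(a)^{*}=\Phi(a^{*})$.

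For the third stage, $\Phi\rs a^{*}\colon a^{*}\to c^{*}$ is now a homeomorphism between Stone--\v{C}ech remainders of two countable locally compact Polish spaces (namely $a$ and $c$, each a copy of $\omega$). Invoking the PFA-based trivialization result cited in the introduction gives a function $h_{a}\colon a\to c\subseteq Y$, defined on a cofinite subset of $a$ (extend arbitrarily on the finite complement), such that $\beta h_{a}$ agrees with $\Phi$ on $a^{*}$. Then $h_{a}[a]\subseteq c\cup h_{a}[F]\in\calD(Y)$, and the equivalent formulation $\Phi(b^{*})=h_{a}[b]^{*}$ for $b\subseteq a$ follows by the standard relation between the Stone--\v{C}ech extension $\beta h_{a}$ and images $h_{a}[b]$, together with the uniqueness of $\Psi$ on subsets of~$a$.

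The main obstacle is stage two: producing the discrete $c$ and verifying $c^{*}=\Psi(a)^{*}$. The set $\Psi(a)$ sits inside a potentially connected Polish space such as $\bbH$, so it is not discrete or even zero-dimensional \emph{a priori}; only its remainder in $\beta Y$ is. Converting zero-dimensionality of $\Psi(a)^{*}$ into a compact-clopen cellular decomposition of $\Psi(a)$ at infinity requires a Stone-duality argument on clopen subsets of $\beta \Psi(a)$ together with the separation properties of $\beta Y$, and it is here that the combinatorial power of Lemma~\ref{L0} is essential for assembling the finite witnesses into a single closed discrete~$c$.
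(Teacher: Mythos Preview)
Your overall architecture matches the paper's proof: first produce some closed $b_0\subseteq Y$ with $\Phi(a^*)=b_0^*$ (immediate from the representation), then upgrade $b_0$ to a closed discrete set, then invoke the PFA rigidity results for $\omega^*$ to trivialize $\Phi\rs a^*$. Stage~3 of your proposal is essentially the paper's final step (the paper cites \cite{ShSte:PFA} and \cite{Ve:OCA}; your citations to \cite[Theorem~4.10.1]{Fa:AQ} and \cite{FaMcK:Homeomorphisms} accomplish the same thing).

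The genuine gap is stage~2. Your proposed mechanism---partition $a$ into finite intervals $a_k$, look at the compact sets $\Psi(a_k)$, arrange them as clopen pieces of $\Psi(a)$, and pick one point from each---does not yield a set $c$ with $c^*=\Psi(a)^*$. First, the representation $\Psi$ is an arbitrary function satisfying $\Phi(F^*)=\Psi(F)^*$; it need not respect unions or disjointness beyond what this forces on remainders, so there is no reason for the $\Psi(a_k)$ to be clopen in $\Psi(a)$ or even to cover it. Second, and more fundamentally, even if you had a partition of $\Psi(a)$ (mod compact) into compact clopen pieces $C_k$, choosing one point $c_k\in C_k$ gives $c^*\subsetneq\Psi(a)^*$ unless each $C_k$ is a singleton. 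For example, if $\Psi(a)$ happened to be $\bigsqcup_k[2k,2k+1]\subseteq\bbR$ you would get $c^*\cong\omega^*$ strictly inside $\Psi(a)^*\cong\omega^*\times[0,1]$. The invocation of Lemma~\ref{L0} does not help: that lemma produces upper bounds in $(\calD(X_0),\subseteq^*)$, not closed discrete sets with prescribed remainders.

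What you actually need in stage~2 is precisely the theorem you are trying to circumvent: under PFA, any locally compact Polish space whose remainder is a continuous image of $\omega^*$ is, modulo a compact set, a copy of $\omega$. This is the main result of Dow--Hart \cite{DoHa:Images} (see also \cite[\S4]{Fa:AQ}), and the paper invokes it directly. With that in hand the proof of Lemma~\ref{L3} is three lines: $\Psi(a)=b_0$ has $b_0^*\cong\omega^*$, so by \cite{DoHa:Images} $b_0$ is $\omega$ plus a compact set, giving $b\in\calD(Y)$ with $\Phi(a^*)=b^*$; then apply \cite{ShSte:PFA} or \cite{Ve:OCA}. Your stage~1 Boolean-algebra analysis and the appeal to a representation of $\Phi^{-1}$ are unnecessary for this lemma.
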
 

\begin{proof} Fix $a\in \calD(X)$. 
Since $\Phi$ has a representation, there is $b_0\subseteq Y$ such that 
$\Phi(a^*)=b^*_0$. By PFA and the main result of 
 \cite{DoHa:Images} (see also \cite[\S4]{Fa:AQ}), 
 $b_0$ is homeomorphic to a direct sum of $\omega$ and a compact
set. By removing this compact set we obtain $b\subseteq b_0$ in $\calD(Y)$ 
such that $\Phi(a^*)=b^*$. 
By   \cite{ShSte:PFA} or 
\cite{Ve:OCA}, the restriction of $\Phi$ to every $a\in \calD(X)$ is trivial and we obtain 
the required map $h_a\colon a\to Y$. 
\end{proof} 

By Lemma~\ref{L3} applied with the roles of $X$ and $Y$ reversed 
for $b\in \calD(Y)$ we obtain $g_b\colon b\to X$ such that 
\[
\Phi^{-1}(y)=\beta g_b(y)
\]
for every $y\in b^*$. 

For $a\in \calD(X)$, a subset of $a$ is compact if and only if it is finite. 
We note the following immediate consequences. 
\begin{enumerate}
\item [(A)] If $a$ and $a'$ are in $\calD(X)$ then $h_a(x)\neq h_{a'}(x)$ for at most finitely 
many $x\in a\cap a'$. 
\item [(B)] If $a\in \calD(X)$ and $b=h_a[a]$, then $(g_b\circ h_a)(x)\neq x$ for at most 
finitely many $x\in a$. 
\end{enumerate}


\begin{lemma}[PFA] \label{L.1-} Assume $X,Y$ and $\Phi$ are as in Theorem~\ref{T1}. 
If $X_0$ is a countable subset of $X$ with non-compact closure then 
there exists
 $h^{X_0} \colon X_0\to Y$ such that  
 $\Phi(a^*)=h^{X_0}[a]^*$ for every $a\in \calD(X_0)$. 
\end{lemma}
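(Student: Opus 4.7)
The maps $h_a$ ($a \in \calD(X_0)$) given by Lemma~\ref{L3} form a \emph{coherent family} modulo finite: by property (A), for any $a, a' \in \calD(X_0)$, $h_a(x) = h_{a'}(x)$ for all but finitely many $x \in a \cap a'$. The conclusion $\Phi(a^*) = h^{X_0}[a]^*$ for every $a \in \calD(X_0)$ is equivalent to asking that $h^{X_0}\rs a =^* h_a$, since $h^{X_0}[a]$ and $h_a[a]$ would then differ by a finite set, which has empty Stone--\v{C}ech remainder. So the lemma reduces to \emph{uniformizing} the coherent family $\{h_a\}$ by a single function on $X_0$.

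Two ZFC facts prepare the ground. First, $\calD(X_0)$ is $\sigma$-directed under $\subseteq^*$: given countably many $a_k \in \calD(X_0)$, the compact exhaustion $X = \bigcup_n K_n$ permits a diagonal selection, picking at most $n$ points of $\bigcup_{k\leq n} a_k$ from each shell $K_{n+1}\setminus K_n$, which yields a pseudo-union $c \in \calD(X_0)$ with $a_k \subseteq^* c$ for all $k$. Second, by coherence, $a \subseteq^* c$ in $\calD(X_0)$ implies $h_a =^* h_c \rs a$ (a cofinite subset of $a$ lies in $a \cap c$, and on that overlap $h_a$ and $h_c$ agree mod finite). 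Consequently it is enough to produce $h^{X_0}$ that agrees mod finite with $h_c$ for $c$ ranging over a $\subseteq^*$-cofinal subfamily of $\calD(X_0)$.

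The uniformization itself I would obtain by a PFA forcing construction. Consider the poset $\bbP$ of conditions $(\sigma, a)$ with $\sigma \colon X_0 \rightharpoonup Y$ a finite partial function, $a \in \calD(X_0)$, and $\sigma \subseteq h_a$, ordered so that $(\tau, b) \leq (\sigma, a)$ iff $\tau \supseteq \sigma$, $b \supseteq a$, and $\tau(x) = h_a(x)$ for every $x \in a \cap \dom\tau$. The sets $E_n = \{(\sigma, a) : x_n \in \dom\sigma\}$ are dense (extend $\sigma$ by $h_b(x_n)$ after enlarging $a$ to some $b$ containing $x_n$, using $\sigma$-directedness), and so are the sets $D_{a,k} = \{(\sigma, b) : a \subseteq^* b,\ |\dom\sigma \cap a| \geq k\}$, by $\sigma$-directedness together with coherence. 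A filter meeting all $E_n$ and all relevant $D_{a,k}$ amalgamates via its $\sigma$-parts into the desired function $h^{X_0}$.

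The main obstacle is twofold. First, \emph{properness of $\bbP$} must be verified: given a countable elementary submodel $M \prec H_\theta$ and a condition $(\sigma_0, a_0) \in M$, a master condition is built by choosing a $\subseteq^*$-pseudo-upper-bound for $\calD(X_0) \cap M$ using the ZFC preliminary, then reconciling the finitely many finite obstructions arising from $M$ via the coherence property (A). Second, since $|\calD(X_0)|$ may be $\fc = \aleph_2$ under PFA while PFA produces filters meeting only $\aleph_1$-many dense sets, one must restrict the dense sets $D_{a,k}$ to $a$ ranging over a $\subseteq^*$-cofinal subfamily of $\calD(X_0)$ of cardinality $\aleph_1$. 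Such a subfamily is produced by an MA$_{\aleph_1}$-style recursion (Lemma~\ref{L0}, a consequence of PFA) building an $\omega_1$-chain in $\calD(X_0)$ whose coherence-propagation via the second ZFC preliminary ensures that uniformization along the chain forces $h^{X_0}\rs a =^* h_a$ for every $a \in \calD(X_0)$, not just those along the chain. This reduction is the subtlest point of the argument and where the countability of $X_0$ is essential.
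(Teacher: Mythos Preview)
Your reduction of the lemma to uniformizing the coherent family $\{h_a:a\in\calD(X_0)\}$ is correct, and so is the observation that $(\calD(X_0),\subseteq^*)$ is $\sigma$-directed. The gap is in the last paragraph: you cannot reduce to $\aleph_1$ many dense sets in the way you describe. Under PFA the cofinality of $(\calD(X_0),\subseteq^*)$ is typically $\aleph_2$, not $\aleph_1$. For example, take $X=[0,1)$ and $X_0=\Qb\cap[0,1)$; writing $I_n=[1-2^{-n},1-2^{-n-1})$, a set $a\subseteq X_0$ is closed discrete iff each $a\cap I_n$ is finite, and a family is $\subseteq^*$-cofinal in $\calD(X_0)$ iff the associated family of functions $n\mapsto\max(a\cap I_n)$ is dominating in $\omega^\omega$. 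Hence $\cof(\calD(X_0),\subseteq^*)=\fd=\aleph_2$ under PFA. Your fallback---uniformize along an $\omega_1$-chain $\langle c_\alpha\rangle$ and hope coherence propagates to arbitrary $a$---fails for exactly this reason: given $a\not\subseteq^* c_\alpha$ for all $\alpha$, the finite error between $h_a$ and $h_{c_\alpha}$ depends on $\alpha$, and as $x$ ranges over $a$ the relevant $\alpha$ is unbounded, so the errors can accumulate to an infinite set. So PFA applied to your poset~$\bbP$ does not yield the uniformization.

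The paper takes a different route that sidesteps this obstacle. It encodes the incoherence as a sequence of open colourings $K_0^n$ of pairs from $\calD(X_0)$ (viewing $h_a$ as a point of the compact metrizable space $(Y\cup\{\infty\})^{X_0}$) and invokes the PFA dichotomy of \cite[\S3]{Fa:Cauchy}. The ``homogeneous'' alternative---an uncountable continuously embedded family with prescribed incoherence pattern---is ruled out using Lemma~\ref{L0} and coherence (A). The surviving alternative gives a countable cover $\calD(X_0)=\bigcup_n\cY_n$ with $[\cY_n]^2\cap K_0^n=\emptyset$; this means that for $a,b\in\cY_n$ the maps $h_a$ and $h_b$ agree \emph{exactly} (not merely mod finite) on $(a\cap b)\setminus K_n$. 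Now $\sigma$-directedness guarantees that some $\cY_{\bar n}$ is $\subseteq^*$-cofinal, and since the restrictions $h_a\rs(a\setminus K_{\bar n})$ for $a\in\cY_{\bar n}$ are literally compatible, their union is the desired $h^{X_0}$. The key point your approach misses is this passage from mod-finite agreement to exact agreement on a cofinal piece; that is precisely what the open-colouring dichotomy buys.
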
 

\begin{proof} 
Let $Y\cup\{\infty\}$ denote the one-point compactification of $Y$. 
For $n\in \omega$ define a partition of unordered pairs in $\calD(X_0)$ 
   by 
\[
\{a,b\}\in K_0^n\text{ iff } (\exists x\in (a\cap b)\setminus K_n) h_a(x)\neq h_b(x). 
\]
Identify $a\in \calD(X_0)$ with a function $\tilde h_a\colon X_0\to Y\cup \{\infty\}$ 
that extends $h_a$ and sends $X_0\setminus a$ to $\infty$ 
and equip $(Y\cup \{\infty\})^{X_0}$ with the product topology. 
Since $Y\cup\{\infty\}$ is compact and metrizable, 
with this identification each $K_0^n$ is an open partition. 
For distinct $\alpha$ and $\beta$ in  $2^\omega$ 
we denote the least $n$ such that $\alpha(n)\neq \beta(n)$ by $\Delta(\alpha,\beta)$. 
By  PFA and the main result of  \cite[\S 3]{Fa:Cauchy}  
one of the two following possibilities \ref{I1} or \ref{I2} (corresponding 
to (b') and (a) of \cite[\S 3]{Fa:Cauchy}, respectively) applies. 

\subsubsection{There is  
$\cZ\subseteq 2^\omega$ of cardinality $\aleph_1$ and a continuous injection   
$\eta\colon \cZ\to \calD(X_0)$ such that $
\{\eta(\alpha), \eta(\beta)\}\in K_0^{\Delta(\alpha,\beta)}$
for all distinct $\alpha$ and $\beta$ in $\cZ$.}\label{I1} 

We shall prove that this alternative leads to contradiction. 
Since $\bigcup\eta[Z]\subseteq X_0$ and $X_0$ is countable,  
by Lemma~\ref{L0} we can find $a\in \calD(X_0)$ such that $\eta(\alpha)\subseteq^* a$ for 
all $\alpha\in \cZ$. 

Fix for a moment $\alpha\in \cZ$.  Then for all  but finitely many $y\in \eta(\alpha)$ 
we have  $h_{\eta(\alpha)}(y)=h_a(y)$. By a counting argument we can find $\bar m\in \omega$
and an uncountable $\cZ_0\subseteq \cZ$ such that for every $\alpha\in \cZ_0$ we have
$\eta(\alpha)\setminus K_{\bar m}\subseteq a$ and for all 
$y\in \eta(\alpha)\setminus K_{\bar m}$ we have $ h_{\eta(\alpha)}(y)=h_a(y)$. 

Pick $\alpha$ and $\beta$ in $\cZ_0$ such that $\Delta(\alpha,\beta)>\bar m$. 
Then $\{\eta(\alpha), \eta(\beta)\}\in K_0^{\bar m}$ and 
there exists $y\in (\eta(\alpha)\cap \eta(\beta))\setminus K_{\bar m}$
such that $h_{\eta(\alpha)}(y)\neq h_{\eta(\beta)}(y)$, contradicting the fact that 
both functions agree with $h_a$ past $K_{\bar m}$.

\subsubsection{There are $\cY_n$, for $n\in \omega$, such that 
$\calD(X_0)=\bigcup_n \cY_n$ and $[\cY_n]^2\cap K_0^n=\emptyset$ for all $n$.} \label{I2} 
Consider $\calD(X_0)$ as a partial ordering with respect to $\subseteq^*$.  
By Lemma~\ref{L0}, every countable subset of $\calD(X_0)$ is bounded. 
Therefore there exists $\bar n$ such that $\cY_{\bar n}$ is $\subseteq^*$-cofinal in $\calD(X_0)$
(see e.g., \cite[Lemma~2.2.2 (b)]{Fa:AQ}). 
For each $a\in \cY_{\bar n}$ let 
\[
\tilde h_a=h_a\rs (X\setminus K_{\bar n}). 
\]
Then 
$h=\bigcup\{\tilde h_a: a \in \cY_{\bar n}\}$
is a function since $\{a,b\}\notin K_0^{\bar n}$ for all distinct $a$ and $b$ in $\cY_{\bar n}$. 
Then for every $a\in \calD(X_0)$ there exists $b\in \cY_{\bar n}$ 
such that $a\subseteq^* b$. 
We therefore  have  $a\subseteq^* \dom(h)$ and 
$h_a(x)=h(x)$ for all but finitely many $x\in a$. 
This in particular implies that with $h^{X_0}$ as guaranteed by Lemma~\ref{L.1-} we have  
 $\Phi(a^*)=h^{X_0}[a]^*$ for every $a\in \calD(X_0)$. 
\end{proof} 


\begin{lemma}[PFA] \label{L1} Assume $X,Y$ and $\Phi$ are as in Theorem~\ref{T1}. 
If $X_0$ is a countable subset of $X$ with non-compact closure then 
there exist countable $Y_0\subseteq Y$, $m$,  and a homeomorphism  
 $h \colon X_0\setminus K_m\to Y_0$ such that  
 $\Phi(a^*)=h[a]^*$ for every $a\in \calD(X_0)$. 
\end{lemma}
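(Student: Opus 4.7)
The plan is to refine the function $h^{X_0}\colon X_0\to Y$ provided by Lemma~\ref{L.1-} into the sought homeomorphism on a tail $X_0\setminus K_m$. First, I would apply Lemma~\ref{L.1-} to $X_0$ to obtain $h^{X_0}$ with $\Phi(a^*)=h^{X_0}[a]^*$ for every $a\in \calD(X_0)$.

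Second, I would show that $h^{X_0}$ is injective past some $K_m$. If not, recursively pick distinct $x_n,x'_n\in X_0\setminus K_n$ with $h^{X_0}(x_n)=h^{X_0}(x'_n)$ and thin so that $a=\{x_n:n\in\omega\}$ and $a'=\{x'_n:n\in\omega\}$ are disjoint and belong to $\calD(X_0)$. Then $h^{X_0}[a]=h^{X_0}[a']$, so $\Phi(a^*)=\Phi(a'^*)$ forces $a^*=a'^*$; but disjoint countable closed discrete subsets of $X$ have disjoint Stone--\v{C}ech remainders, a contradiction. Set $h=h^{X_0}\rs(X_0\setminus K_m)$ and $Y_0=h[X_0\setminus K_m]$, which is countable since $X_0$ is.

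Third, applying Lemma~\ref{L.1-} to $\Phi^{-1}$ and $Y_0\subseteq Y$ (which has non-compact closure since $h$ sends sequences escaping every $K_n$ to sequences escaping every compact in $Y$) yields $g^{Y_0}\colon Y_0\to X$ with $\Phi^{-1}(b^*)=g^{Y_0}[b]^*$ for every $b\in \calD(Y_0)$. By observations (A) and (B) recorded just before Lemma~\ref{L.1-}, $g_b\circ h_a$ is the identity mod finite on every $a\in\calD(X_0)$ with $b=h_a[a]$; combined with the mod-finite agreement of $h^{X_0}$ with each $h_a$ (and the analogous statement on the $Y$ side), this forces $g^{Y_0}\circ h$ to coincide with the identity on $X_0$ past some further compact set. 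Enlarging $m$ and restricting $Y_0$ accordingly, $h$ is a bijection onto $Y_0$ with $g^{Y_0}$ as its set-theoretic inverse.

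The main obstacle is continuity of $h$, since $X_0$ may have accumulation points within itself (for example if $X_0$ is a countable dense subset of an open region of $X$) and Lemma~\ref{L.1-} only controls $h$ on closed discrete subsets. To handle this I would apply the dichotomy from \cite{Fa:Cauchy} (as in the proof of Lemma~\ref{L.1-}) to an open partition on unordered pairs in $\calD(X_0)$ that codes witnessed failures of continuity of $h^{X_0}$, i.e. pairs $\{a,a'\}$ admitting $x\in a$, $x'\in a'$ with $d_X(x,x')$ small while $d_Y(h_a(x),h_{a'}(x'))$ is bounded away from zero. The uncountable-homogeneous alternative would, via Lemma~\ref{L0} and the mod-finite coherence of the $h_a$, produce a single $a\in\calD(X_0)$ on which $h_a$ exhibits pairs arbitrarily close in $X$ with images uniformly separated in $Y$, contradicting that $h_a\colon a\to h_a[a]$ is continuous as a map between discrete sets. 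The countable-decomposition alternative then yields, for each $n$, a $\subseteq^*$-cofinal family of pieces of $\calD(X_0)$ on which $h^{X_0}$ is uniformly $2^{-n}$-continuous; past a sufficiently large $K_m$ this translates into continuity of $h$. A symmetric argument on the $Y$-side establishes continuity of $h^{-1}$, completing the proof.
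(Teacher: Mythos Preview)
Your setup---applying Lemma~\ref{L.1-} on both sides and matching $h^{X_0}$ against the inverse function $g^{Y_0}$ to produce a bijection on a tail---is essentially what the paper does (the paper phrases it as intersecting the graph of $h^{X_0}$ with that of $g^{-1}$, which bypasses your separate injectivity step, but your injectivity argument is fine too).

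The genuine gap is in your continuity argument. Your proposed contradiction in the uncountable-homogeneous alternative is that one obtains a single $a\in\calD(X_0)$ on which $h_a$ sends pairs that are close in $X$ to pairs that are uniformly separated in $Y$, ``contradicting that $h_a\colon a\to h_a[a]$ is continuous as a map between discrete sets.'' But this is no contradiction: $a$ and $h_a[a]$ are discrete spaces, so \emph{every} map between them is continuous. Points of a closed discrete set may well be metrically close in the ambient space $X$ while remaining isolated in $a$; nothing about $h_a$ forbids spreading their images apart. Nor can you rescue this by arguing that $\{x_k\}^*=\{x'_k\}^*$ whenever $d(x_k,x'_k)\to 0$: that is false in general (take $x_k=(k,0)$ and $x'_k=(k,1/k)$ in $\bbR^2$ and separate them by the open set $\{(s,t):|t|<e^{-s}\}$). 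The partition machinery of \cite{Fa:Cauchy}, which operates entirely within $\calD(X_0)$, does not see the non-discrete topology of $X_0$, and continuity of $h$ on $X_0$ is precisely a statement about that topology.

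The paper's argument is quite different. Assuming $h$ is discontinuous past every $K_k$, one picks for each $n$ a sequence $x_{n,i}\to x_n$ in $X_0\cap(K_{n+2}\setminus K_n)$ with $\lim_i h(x_{n,i})\neq h(x_n)$, thins to a subsequence so that images lie in controlled bands $L_{n(j+1)}\setminus L_{n(j-1)}$, and observes that the resulting closed set $c=\{x_{n(j)},x_{n(j),i}\}$ is homeomorphic to the ordinal space $\omega^2$---crucially \emph{not} discrete. One then takes ultrafilter limits $y_j=\lim_{i\to\cU}h(x_{n(j),i})$, separates $\{h(x_{n(j)})\}$ from $\{y_j\}$ by disjoint open $U,V\subseteq Y$, and pulls these back through the representation of $\Phi$ to obtain disjoint open sets in $X$ that must simultaneously contain and avoid tails of the convergent sequences---a contradiction. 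The point is that the contradiction comes from the interaction of $h$ with the representation of $\Phi$ on a non-discrete closed set, not from any property of $h$ restricted to a single element of $\calD(X_0)$.
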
 

\begin{proof} First apply Lemma~\ref{L.1-} to $X_0$ and obtain $h^{X_0}$. 
Since the assumptions on $X,Y$ and $\Phi$ are symmetric, 
we can apply  Lemma~\ref{L.1-}
 with the roles of $X$ and $Y$ reversed and $Y_0=h^{X_0}[X_0]$ in place of $X_0$. 
We obtain a function $g$ from a co-compact subset of $Y_0$ into $X$ such that 
for every $a\in \calD(Y_0)$ the domain of $g$ includes $a$ modulo finite and 
$g_a(y)=g(y)$ for all but finitely many $y\in a$. Now define a new function $h$ to be 
the function whose graph is 
the intersection of the graphs of $h^{X_0}$ and $g^{-1}$. 
That is, $h^{X_0}(x)=y$ if $h(x)=y$ and $g(y)=x$, and undefined otherwise. 

We claim that $X_0\setminus \dom(h)$ is compact. Otherwise there exists 
$a\in \calD(X_0)$ disjoint from $\dom(h)$, but this contradicts the choice of $h^{X_0}$ and~$g$. 

We claim that for $a\in \calD(X_0)$ we have $h[a]\in \calD(Y)$. 
Since $a^*$ is nonempty, $h[a]$ is not compact. It therefore suffices to show that 
it has no infinite  subset whose closure is included in $Y$. 
 But if $b\subseteq h[a]$ were infinite and such that $b^*=\emptyset$, 
then $a_1=h^{-1}(b)$ would be a non-compact subset of $a$, contradicting
$\Phi(a_1^*)=b^*$. 

By removing compact sets from  $X_0$ and  $Y_0$ we may assume  
 that $\dom(h)=X_0$ and $\range(h)=Y_0$.

\begin{claim} There is $\bar k$ such that the restriction of  $h$ 
to $X_0\setminus K_{\bar k}$ is continuous
and the restriction of $h^{-1}$ to $Y_0\setminus L_{\bar k}$ is continuous. 
\end{claim} 

\begin{proof} 
Assume that the restriction of $h$ to $X_0\setminus K_{k}$ is discontinuous 
for all $k$. 
For every $n$ choose a 
sequence $\{x_{n,i}\}_i$ in $X_0\cap (K_{n+2}\setminus K_n)$
   converging to $x_n$ such that 
$\lim_i h(x_{n,i})\neq h(x_n)$.

Since 
 both $h$ and $h^{-1}$ send relatively compact sets to relatively compact sets and 
the interior of $K_{n+1}$ includes $K_n$ for all $n$, 
for every $m$ there exists $n$ such that $h[\dom(h)\cap K_m]\subseteq L_n$
and $h^{-1}[L_m]\subseteq K_n$. 
We can therefore go to a subsequence $n(j)$, for $j\in \omega$, 
 such that (with $n(0)=0$) for $j\geq 1$ and all $i$ we have 
  \[
  h(x_{n(j),i})\in L_{n(j+1)}\setminus L_{n(j-1)}
\qquad\text{ and } 
  h(x_{n(j)})\in L_{n(j+1)}\setminus L_{n(j-1)}. 
\]
The only accumulation points of the set 
\[
c=\{x_{n(j)}, x_{n(j),i}: j\geq 1, i\in\omega\}
\]
 are 
$x_{n(j)}$, for $j\geq 1$. Since $x_{n(j)}$, for $j\geq 1$, form a closed discrete set 
   $c$ is homeomorphic to  
$\omega^2$ equipped with its ordinal topology. 
The proof can now be completed by applying the weak Extension Principle (wEP) of \cite[\S 4]{Fa:AQ}, but we give
an elementary and self-contained proof. 

Fix a nonprincipal ultrafilter $\cU$ on $\omega$. Then $y_j=\lim_{i\to \cU} h(x_{n(j), i})$ exists and is  in $L_{n(j+1)}$ by compactness. 
Since each $L_{n(j+1)}$ is second countable, by a diagonal argument we can choose a sequence $i(k)$, for $k\in \omega$, such that 
\[
\lim_k h(x_{n(j),i(k)})=y_j
\]
for all $j$. 
Since $y_j$ and $h(x_{n(j)})$  are distinct elements of $L_{n(j+1})\setminus L_{n(j-1)}$ for all $j$, 
we can find disjoint  open subsets $U$ and $V$ of $Y$ such that 
$h(x_{n(j)})\in U$ and $y_j\in V$ for all $j$. By going to subsequences again and re-enumerating  we can assume that 
$h(x_{n(j),i(k)})\in U$ for all $j,k$.  
If $W$ and $S$ are disjoint open subsets of $X$ such that $\Phi(W^*)=U^*$ and $\Phi(S^*)=V$ 
then we have that 
$x_{n(j)}\in W$ for all but finitely many $j$ but $x_{n(j),i(k)}\notin S$ for  
 every $j$ and all but finitely many $k$---a contradiction. 
\end{proof}

By the above argument, for every countable $X_0\subseteq X$ we can find $n$ and a continuous  
function $h^{X_0}\colon X_0\setminus K_n\to Y$ such that $\beta h^{X_0}$ 
agrees with $\Phi$ on $X_0^*$. 
If $X_0\subseteq X_1$ then $h^{X_1}$ extends $h^{X_0}\rs (X_0\setminus K_n)$ for 
a large enough $n$. 
\end{proof}

  \begin{lemma} \label{L0.1} Assume $X, Y$ and $\Phi$  are as in the assumption of 
  Theorem~\ref{T1} and 
$\Phi_1\colon X^*\to Y^*$ is a trivial 
homeomorphism such that $\Phi^{-1}$ and $\Phi_1^{-1}$ agree on sets of the form 
$a^*$ for $a\in \calD(Y)$. 
  Then $\Phi=\Phi_1$. 
  \end{lemma}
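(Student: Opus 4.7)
The plan is to deduce the equality $\Phi=\Phi_1$ from a density-plus-continuity argument. I read the hypothesis pointwise: $\Phi^{-1}(q)=\Phi_1^{-1}(q)$ for every $q\in a^*$ and every $a\in\calD(Y)$. Then the two continuous maps $\Phi^{-1},\Phi_1^{-1}\colon Y^*\to X^*$ coincide on
\[
D:=\bigcup_{a\in\calD(Y)} a^*,
\]
and since $X^*$ is Hausdorff it will suffice to prove that $D$ is dense in $Y^*$; the equality $\Phi^{-1}=\Phi_1^{-1}$ (hence $\Phi=\Phi_1$) then follows from the fact that continuous maps into a Hausdorff space agreeing on a dense set are equal.

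The heart of the argument is therefore the density claim, and this is the only step where real work is required. To establish it, I would fix a non-empty relatively open $U\subseteq Y^*$, write $U=\tilde U\cap Y^*$ for some open $\tilde U\subseteq\beta Y$, and then use regularity of the compact Hausdorff space $\beta Y$ to pick a non-empty open $\tilde V\subseteq\beta Y$ with $\tilde V\cap Y^*\neq\emptyset$ and $\overline{\tilde V}^{\beta Y}\subseteq\tilde U$. Since $\tilde V\cap Y^*\neq\emptyset$, the open set $\tilde V\cap Y\subseteq Y$ is not relatively compact in $Y$; fixing a compact exhaustion $(L_n)$ of the locally compact, $\sigma$-compact space $Y$, I would inductively choose pairwise distinct $y_n\in(\tilde V\cap Y)\setminus L_n$. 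The resulting set $a:=\{y_n:n\in\omega\}$ is closed and discrete in $Y$, so $a\in\calD(Y)$. Because $a$ is $C^*$-embedded in $\beta Y$, the closure $\overline{a}^{\beta Y}$ is homeomorphic to $\beta\omega$, so $a^*\neq\emptyset$, and $a^*\subseteq\overline{\tilde V}^{\beta Y}\cap Y^*\subseteq U$. This produces a non-empty subset of $D$ inside $U$, as required.

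The main (and only) obstacle is the density claim just outlined, and within it the delicate point is arranging $a^*\subseteq U$ rather than the weaker $a^*\subseteq\overline U$; shrinking once inside $\beta Y$ to $\tilde V$ with $\overline{\tilde V}^{\beta Y}\subseteq \tilde U$ before extracting the discrete sequence is precisely what avoids this pitfall. I note that the hypothesis that $\Phi_1$ is trivial never enters the argument: the same proof shows that any pair of homeomorphisms $X^*\to Y^*$ whose inverses agree pointwise on every $a^*$ ($a\in\calD(Y)$) must coincide. If the intended reading of the hypothesis is the weaker set-level $\Phi^{-1}(a^*)=\Phi_1^{-1}(a^*)$, the same density claim still closes the argument: setting $\Lambda:=\Phi\circ\Phi_1^{-1}$, one has $\Lambda(a^*)=a^*$ for every $a\in\calD(Y)$, and if $\Lambda(p)=q\neq p$ one separates $p$ and $q$ by disjoint open $U_p\ni p$ and $U_q\ni q$, chooses an open $W\ni p$ with $\Lambda(W)\subseteq U_q$, and applies density to $W\cap U_p$ to obtain $a\in\calD(Y)$ with $\emptyset\neq a^*\subseteq W\cap U_p$; then $a^*=\Lambda(a^*)\subseteq U_p\cap U_q=\emptyset$, a contradiction.
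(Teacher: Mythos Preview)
Your proof is correct and is in fact more elementary and more general than the paper's. The paper argues by contradiction using the extra structure in the hypotheses: it fixes a representation $\Psi$ of $\Phi$ and a homeomorphism $h\colon X\setminus K\to Y\setminus L$ witnessing triviality of $\Phi_1$, observes that $\Phi\neq\Phi_1$ forces $\Psi(F)\,\Delta\,h[F]$ to be non-compact for some closed $F\subseteq X$, extracts an $a\in\calD(Y)$ inside one side of this symmetric difference, and checks that then $\Phi^{-1}(a^*)\cap\Phi_1^{-1}(a^*)=\emptyset$, contradicting the hypothesis (read set-wise). Your route bypasses both the representation of $\Phi$ and the triviality of $\Phi_1$ entirely: you prove once and for all that $\bigcup_{a\in\calD(Y)} a^*$ is dense in $Y^*$ and finish by continuity (and, for the set-level reading, a short fixed-set argument). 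What your approach buys is a strictly stronger statement---any two homeomorphisms $X^*\to Y^*$ whose inverses agree on all $a^*$ with $a\in\calD(Y)$ must coincide---while the paper's argument stays closer to the machinery (representations, the map $h$) already in play in the surrounding proof of Theorem~\ref{T1}.
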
 

\begin{proof} 
Fix  a representation $\Psi$ of $\Phi$ and a 
 homeomorphism $h\colon X\setminus K\to Y\setminus L$ 
   between co-compact subsets of $X$ and $Y$ such that $(\beta h)[F^*]=\Phi_1(F^*)$
  for all $F\in \cF_X$. 
Assume $\Phi\neq \Phi_1$. Then for some $F\in \cF_X$ we have that 
$\Psi(F)\Delta h[F]$ is not compact. We can therefore find $a\in \calD(Y)$ such that 
(i) $a\subseteq \Psi(F)\setminus h[F]$ or (ii) $a\subseteq h[F]\setminus \Psi(F)$. 

In either case we have that $\Phi^{-1}(a^*)\cap \Phi_1^{-1}(a^*)=\emptyset$,
contradicting our assumption. 
\end{proof}

  \begin{proof}[Proof of Theorem~\ref{T1}]
Fix a countable dense  set $X_0\subseteq X$ and apply Lemma~\ref{L1}
to obtain $h^{X_0}$. Let $\tilde h$ be the maximal continuous extension of $h^{X_0}$ to 
a $G_\delta$ subset $X_1$ of $X$. We claim that $X_1\supseteq X_0\setminus K_n$ for 
some $n$. Otherwise, find $a\in \calD(X)$ disjoint from $X_1$ and apply Lemma~\ref{L1} 
to $X_2=X_0\cup a$. The resulting continuous function $h^{X_2}$ agrees with $h^{X_0}$ 
on $X_0\setminus K_n$ for a large enough $n$. Since $K_n$ is included in the interior of $K_{n+1}$, 
 the restriction of $h^{X_2}$ to $\dom(h^{X_2}\setminus K_{n+1})$ is compatible with $\tilde h$
 contradicting the assumption that $h^{X_0}$ 
cannot be continuously extended to the points in~$a$. 

Therefore the domain of $\tilde h$ contains $X\setminus K_n$ for a large enough $n$. 
The analogous argument shows that the range of $\tilde h$ includes $Y\setminus K_m$ for 
a large enough $m$, and that $\tilde h$ is a homeomorphism. 
The restriction of the map $\beta\tilde h$ to $X^*$ is a homeomorphism between $X^*$ and $Y^*$, 
and by Lemma~\ref{L0.1} this trivial homeomorphism coincides with $\Phi$. 
\end{proof}

An autohomeomorphism $\Phi$ of  $(X^*)^\kappa$ is \emph{trivial} 
if there are a permutation $\sigma$ of $\kappa$ and autohomeomorphisms 
$f_\xi$, $\xi<\kappa$, of $\bbH$ such that $\Phi(x)(\xi)=\beta f_\xi(x(\sigma(\xi))$ for every $x\in (X^*)^\kappa$. 
Since $\bbH^*$ is connected, the following is an immediate consequence of Theorem~\ref{T0} and the main result of~\cite{Fa:Dimension}. 

\begin{cor}[PFA] For an arbitrary cardinal $\kappa$, 
 all autohomeomorphisms  of $(\bbH^*)^\kappa$ are trivial. \qed
 \end{cor}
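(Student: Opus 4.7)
The plan is to chain together the two cited inputs. The main result of \cite{Fa:Dimension}, invoked with $Z=\bbH^*$, says that if $Z$ is a connected remainder of the appropriate kind then every autohomeomorphism $\Phi$ of $Z^\kappa$ decomposes, up to a permutation of coordinates, as a coordinatewise product of autohomeomorphisms of $Z$. Since $\bbH=[0,1)$ is connected, so is $\bbH^*$, so the hypothesis is met. The output is a permutation $\sigma$ of $\kappa$ and autohomeomorphisms $g_\xi \colon \bbH^*\to\bbH^*$ such that
\[
\Phi(x)(\xi)=g_\xi\bigl(x(\sigma(\xi))\bigr)\qquad(x\in(\bbH^*)^\kappa,\ \xi<\kappa).
\]

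Next, I would apply Theorem~\ref{T0} (PFA) to each $g_\xi$ and, separately, to its inverse. This supplies, for each $\xi$, a compact $K_\xi\subseteq \bbH$ and a continuous $f_\xi^\circ\colon \bbH\setminus K_\xi\to \bbH$ with $\beta f_\xi^\circ\rs\bbH^*=g_\xi$, together with an analogous partial map in the reverse direction. Composing these two partial maps yields the identity outside a compact set, so $f_\xi^\circ$ is in fact a homeomorphism between two co-compact subsets of $\bbH$. Because $\bbH$ is a half-line, any homeomorphism between co-compact pieces extends, after a routine adjustment on a compact interval near $0$, to an autohomeomorphism $f_\xi\in\Aut(\bbH)$ still satisfying $\beta f_\xi\rs\bbH^*=g_\xi$. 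Substituting this back into the decomposition from the previous paragraph gives $\Phi(x)(\xi)=\beta f_\xi(x(\sigma(\xi)))$, which is exactly the definition of a trivial autohomeomorphism of $(\bbH^*)^\kappa$.

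The only point worth double-checking — and the only place where I might expect any mild friction — is the last adjustment: Theorem~\ref{T0} delivers triviality in the sense of a continuous map defined modulo a compact set, whereas the target definition of ``trivial'' for product autohomeomorphisms demands a genuine element of $\Aut(\bbH)$. This gap is bridged cleanly for $\bbH$ because its compact subsets are contained in initial intervals, so patching on $[0,t]$ for $t$ large enough converts $f_\xi^\circ$ into an honest autohomeomorphism of $\bbH$ without changing its action on $\bbH^*$. All substantive content is absorbed by the two theorems being quoted, which is why the paper records this as an immediate consequence.
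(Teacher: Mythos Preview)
Your approach matches the paper's exactly: invoke the main result of \cite{Fa:Dimension} (using connectedness of $\bbH^*$) to reduce to coordinatewise autohomeomorphisms, then apply Theorem~\ref{T0} to each factor. One small correction: the implication ``$\bbH$ connected $\Rightarrow$ $\bbH^*$ connected'' is not valid in general (e.g., $\bbR$ is connected but $\bbR^*$ is not); rather, $\bbH^*$ is connected because $\bbH$ has a single end --- this is a standard fact you may simply cite. Your additional care in upgrading the partial maps from Theorem~\ref{T0} to genuine autohomeomorphisms of $\bbH$ is appropriate and correctly handled.
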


\subsection{A homeomorphism without a representation}
\label{S.Norep}
It is now time to give an example promised in the beginning of \S\ref{S.Rep}. 
By Parovi\v{c}enko's theorem,  CH implies that 
 $(\omega^2)^*$ (where $\omega^2$ is taken with respect to the 
ordinal topology) and $\omega^*$ are 
homeomorphic. However, a homeomorphism $\Phi\colon  (\omega^2)^*\to \omega^*$ does not have a
representation. If $a\subseteq\omega^2$ is the set of limit ordinals below $\omega^2$, then $a^*$ 
is closed and nowhere dense. Therefore $\Phi(a^*)$ is a closed nowhere dense subset of $\omega^*$. 
Since for every $b\subseteq\omega$ the set $b^*$ is clopen, $\Phi(a^*)\neq b^*$ for all $b$.

On the other hand, $\Phi^{-1}$ has a representation. 
As a matter of fact, whenever $\Psi\colon \omega^*\to X^*$ is a homeomorphism 
then $\Psi$ has a representation. In order to show this it suffices to prove that
if $F\subseteq X^*$ is clopen then $F=b^*$ for closed $b\subseteq X$. 
But if $U$ and $V$ are open subsets of $\beta X$ such that $U\cap X^*=F$ and $X^*\setminus V=F$, 
then clearly $b=X\setminus V$ satisfies $b^*=F$.

\section{Concluding remarks} 
\label{S.CR} 
The motivation for this work comes 
from \cite[Conjecture~1.2 and Conjecture~1.3]{CoFa:Automorphisms}. We restate the abelian case of these conjectures in its dual form. 

\begin{conj} [PFA] Every homeomorphism between Stone--\v{C}ech remainders
of locally compact Polish spaces $X$ and $Y$ is trivial. 
\end{conj}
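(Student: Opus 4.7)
The plan is to reduce the conjecture to Theorem~\ref{T1}: it suffices to prove, under PFA, that every homeomorphism $\Phi\colon X^*\to Y^*$ between Stone--\v{C}ech remainders of locally compact Polish spaces automatically admits a representation, and likewise for $\Phi^{-1}$. Once this is established, triviality of $\Phi$ follows directly from Theorem~\ref{T1}, so the entire content of the conjecture is transferred to the existence of representations.

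To establish representations, the strategy is to generalize Lemma~\ref{L-1}. For $\bbH$ this rested on Lemma~\ref{L4}, which characterized sets of the form $a^*$ (with $a\subseteq \bbH$ closed and nontrivial) intrinsically in $\bbH^*$ via a connectedness/disconnection property preserved by homeomorphisms. First I would look for an analogous intrinsic characterization of the collection $\{F^*:F\in \cF_X\}$ inside $X^*$ that is homeomorphism-invariant. A natural candidate, for connected $X$ with suitable local structure near infinity, is to declare a closed $C\subseteq X^*$ to be \emph{of continuous type} if every nonempty relatively open $W\subseteq C$ separates $X^*$; for spaces that are zero-dimensional at infinity one would use clopen structure instead; and for mixed $X$ one would stratify $X$ into its connected components at infinity and handle each regime separately, using the exhaustion $X=\bigcup_n K_n$ with $\sup_n|\partial K_n|<\infty$ from Theorem~\ref{T2} to ensure the stratification is coherent.

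Once candidate invariant characterizations are in hand, the representation $\Psi\colon \cF_X\to \cF_Y$ of $\Phi$ would be constructed precisely as in the proof of Lemma~\ref{L-1}: given $F\in \cF_X$ one considers the image $\Phi(F^*)$, verifies it satisfies the invariant property, and extracts a closed $G\subseteq Y$ with $G^*=\Phi(F^*)$. The verification that $G$ is well-defined and depends functorially on $F$ should follow from compactness arguments mirroring Lemma~\ref{L4}(2). Combining this with Theorem~\ref{T1} then closes the conjecture.

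The main obstacle is that, as the example in \S\ref{S.Norep} shows, the property ``$C=F^*$ for some closed $F\subseteq X$'' is genuinely \emph{not} an intrinsic topological invariant of $X^*$ alone: for $X=\omega^2$ versus $Y=\omega$ the set of such $C$'s differs on the two sides even though (under CH) the coronas are homeomorphic. Thus any candidate characterization must implicitly encode enough of the local structure of $X$ at infinity (exhaustion, local connectedness, dimension, boundary cardinalities) to rule out pathological domains, and the task reduces to showing that under PFA such structure is already determined by $X^*$ up to what is needed to produce $\Psi$. A safer fallback is to replace the representation step by a direct generalization of Lemmas~\ref{L3}--\ref{L1}: using PFA-inputs (OCA, MA, and the triviality results of \cite{ShSte:PFA}, \cite{Ve:OCA}, \cite{DoHa:Images}) on every countable dense $X_0\subseteq X$, produce compatible local trivializations $h^{X_0}$, and glue them into a continuous map $X\setminus K\to Y\setminus L$, bypassing the a priori existence of $\Psi$ altogether. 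This second route seems the likelier path to the full conjecture, but the gluing step will require a uniform control of the local trivializations across all countable $X_0$ that is not needed in the half-line case.
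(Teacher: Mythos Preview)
The statement you are attempting to prove is a \emph{conjecture}, not a theorem: the paper does not prove it and explicitly leaves it open. Immediately after stating the conjecture the paper observes precisely your first reduction, namely that by Theorem~\ref{T1} the conjecture is equivalent to the assertion that under PFA every homeomorphism between such remainders has a representation. So your opening paragraph is not a proof step but a restatement of what the paper already records as the content of the open problem.

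Your proposal is therefore not a proof but a research plan, and you appear to be aware of this (``main obstacle'', ``safer fallback'', ``seems the likelier path''). The plan is reasonable as far as it goes: the idea of seeking an intrinsic characterization of $\{F^*:F\in\cF_X\}$ generalizing Lemma~\ref{L4} is natural, and you correctly identify the \S\ref{S.Norep} example as the obstruction showing this cannot be done purely topologically in $X^*$. Your fallback route via direct generalization of Lemmas~\ref{L3}--\ref{L1} is also sensible, but note that those lemmas already \emph{assume} $\Phi$ has a representation (this hypothesis is used in the proof of Lemma~\ref{L3} to obtain the set $b_0$ with $\Phi(a^*)=b_0^*$, which is the input to the Dow--Hart result). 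So bypassing representations would require a genuinely new argument at that step, not merely a gluing refinement. In short: nothing here is wrong, but nothing here is a proof either, and the paper does not supply one for you to be compared against.
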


Since every trivial homeomorphism has a representation, by Theorem~\ref{T1}
this is equivalent to conjecture that under PFA 
every homeomorphism $\Phi\colon X^*\to Y^*$ between 
remainders of  locally compact, non-compact, 
Polish spaces $X$ and $Y$ 
has a representation.

\begin{conj}\label{C.CH}   Continuum Hypothesis implies that 
$X^*$ has $2^{\aleph_1}$ nontrivial autohomeomorphisms for every locally compact, non-compact, 
separable metrizable space $X$.
\end{conj}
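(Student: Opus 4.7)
The plan is to mirror the derivation of the corollary to Theorem~\ref{T.I.1}, which already yields the conjecture for $X=\bbH$. Under CH the density character of $C(X^*)=\Cb(X)/C_0(X)$ is at most $2^{\aleph_0}=\aleph_1$, so if $C(X^*)$ is countably saturated then CH makes it saturated, and Theorem~\ref{T.Aut} produces $2^{\aleph_1}$ automorphisms. Gelfand--Naimark duality converts these into autohomeomorphisms of~$X^*$. Under CH there are only $2^{\aleph_0}=\aleph_1$ continuous maps between co-compact subsets of $X$, so at most $\aleph_1$ of the resulting autohomeomorphisms can be trivial, leaving $2^{\aleph_1}$ nontrivial ones.

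The first step is to establish countable saturation of $C(X^*)$ for as broad a class of $X$ as possible. Proposition~\ref{P.NS.1}\eqref{P.NS.1} already handles the case in which every connected component of $X$ is compact, and Theorem~\ref{T2} handles the connected case $X=\bbH$ via the bound on $\sup_n|\partial K_n|$. For a general $X$ I would try to weaken this bound: what the patching argument in \S\ref{P.T2} actually needs is that the boundaries $\partial K_n$, viewed in the relevant compact sort of the coefficient structure, are uniformly $\e$-totally bounded for every $\e>0$. The Ramsey-theoretic thinning step and the Tietze-extension patching should then carry over once this is arranged, at least for $X=\bbR^n$ and more generally for connected topological manifolds equipped with a sufficiently tame exhaustion; the finite-$\partial K_n$ hypothesis of Theorem~\ref{T2} should be replaceable by an asymptotic $\e$-capacity bound of the same flavour.

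The main obstacle is that Theorem~\ref{P.NS.5} exhibits a connected $X$ for which $C(X^*)$ fails countable saturation, so no unconditional extension of Theorem~\ref{T2} can cover every~$X$. My fallback is to bypass full countable saturation and re-run Theorem~\ref{T.Aut} using only the countable degree-1 saturation available for every corona of a separable C*-algebra by \cite[Theorem~1]{FaHa:Countable}. In the abelian setting, the critical type $\bfs(x,y)=\bft(x)\cup\bft(y)\cup\{d(x,y)\geq\e\}$ built in Hart's proof is controlled by joint spectra under continuous functional calculus, which suggests a degree-1 reformulation: each condition of the form $\|\phi(x)-c\|=0$ for a polynomial $\phi$ in $x$ and in fixed generators of $A_s$ is degree-1, and the separation $d(x,y)\geq\e$ is itself degree-1. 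Turning this into a genuine splitting of each stage of Hart's back-and-forth into two incompatible extensions is where I expect the real difficulty to lie, because the intermediate models $A_s$ quickly grow to density character $\aleph_1$ and countable degree-1 saturation over an uncountable parameter set is no longer free; one may need to reorganise the construction so that at each stage the relevant types live over a countable subalgebra of $A_s$, as in the back-and-forth systems of \cite[\S4]{Fa:AQ}.
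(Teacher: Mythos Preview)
The statement you are attempting to prove is labelled a \emph{Conjecture} in the paper and is not proved there. Immediately after stating it the authors write that they \emph{do not know} whether it holds even for $X=\bbR^{n+1}$ with $n\geq 1$, and earlier in the introduction they remark that ``it is not clear whether degree-1 saturation suffices to construct many automorphisms.'' So there is no ``paper's own proof'' to compare against; the question is whether your proposal amounts to a proof, and it does not.

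Your first line of attack---extending Theorem~\ref{T2} by relaxing $\sup_n|\partial K_n|<\infty$ to a uniform $\e$-capacity bound on the boundaries---is exactly the question the authors raise in the final section (``to what extent the assumption on compactness of domains in $A$ can be removed from Theorem~\ref{T2}?''). You do not actually carry out the Ramsey step or the patching under this weaker hypothesis; you only assert that it ``should'' go through. For $X=\bbR^{n+1}$ the boundaries $\partial K_n$ are $n$-spheres of growing radius, so their $\e$-capacity in $\bbC$ is \emph{not} uniformly bounded, and the Ramsey argument in \S\ref{P.T2} as written genuinely uses a fixed bound $N_\e$. This is precisely why the authors single out $\bbR^{n+1}$ as open.

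Your fallback via countable degree-1 saturation runs into the obstacle the paper flags explicitly: Hart's proof of Theorem~\ref{T.Aut} realizes types over elementary submodels $A_s$ of density character approaching $\aleph_1$, and degree-1 saturation gives you nothing over uncountable parameter sets. Your suggestion to reorganize so that each type lives over a countable subalgebra is a reasonable heuristic, but you have not shown how to keep the back-and-forth coherent while doing this, nor how to guarantee that the two extensions $f_{s^\frown 0}$ and $f_{s^\frown 1}$ remain distinct automorphisms of the \emph{same} model $A_{s^\frown 0}=A_{s^\frown 1}$ using only degree-1 conditions. As it stands, the proposal is a sketch of two plausible strategies, each blocked by an obstacle the paper already identifies as open.
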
  

By Proposition~\ref{P.NS} (5), $C(X^*)$ is not countably quantifier-free saturated
for some locally compact Polish spaces $X$. However, the space constructed there 
includes a copy of $\bbH$ as a clopen subset and therefore $C(X^*)$ has 
at least as many automorphisms as $C(\bbH^*)$. 
 Large families of automorphisms of coronas 
that are not countably saturated were constructed in \cite{CoFa:Automorphisms} using the Continuum Hypothesis. 
We do not know whether Conjecture~\ref{C.CH} is true for $X=\bbR^{n+1}$, for $n\geq 1$. 
It may be  worth mentioning that for $n\geq 1$ we have (with $\bbT$ denoting the unit circle)
 \[
C((\bbR^{n+1})^*)
\cong \Cb(\bbH, C(\bbT^n))/C_0(\bbH,C(\bbT^n)). 
\]
To see this, remove a small open ball containing the origin from $\bbR^{n+1}$
and note that $\Cb(\bbH\times \bbT^n)\cong \Cb(\bbH, C(\bbT^n))$
(this follows from \cite[3.4]{akemann1973multipliers}, see also 
\cite[II.7.3.12 (iv)]{Black:Operator} by noting that in a unital algebra norm and strict topologies coincide). 

Thus a relevant question is  to what extent
 the assumption on compactness of domains in $A$ 
can be removed from Theorem~\ref{T2}? Proposition~\ref{P.NS} (5) gives a warning sign. 

By Woodin's $\Sigma^2_1$ absoluteness theorem (see \cite{Wo:Beyond}), 
   Continuum Hypothesis is the optimal set-theoretic assumption 
for obtaining  autohomeomorphisms as in Conjecture~\ref{C.CH}. 

\def\germ{\frak} \def\scr{\cal} \ifx\documentclass\undefinedcs
  \def\bf{\fam\bffam\tenbf}\def\rm{\fam0\tenrm}\fi 
  \def\defaultdefine#1#2{\expandafter\ifx\csname#1\endcsname\relax
  \expandafter\def\csname#1\endcsname{#2}\fi} \defaultdefine{Bbb}{\bf}
  \defaultdefine{frak}{\bf} \defaultdefine{=}{\B} 
  \defaultdefine{mathfrak}{\frak} \defaultdefine{mathbb}{\bf}
  \defaultdefine{mathcal}{\cal}
  \defaultdefine{beth}{BETH}\defaultdefine{cal}{\bf} \def\bbfI{{\Bbb I}}
  \def\mbox{\hbox} \def\text{\hbox} \def\om{\omega} \def\Cal#1{{\bf #1}}
  \def\pcf{pcf} \defaultdefine{cf}{cf} \defaultdefine{reals}{{\Bbb R}}
  \defaultdefine{real}{{\Bbb R}} \def\restriction{{|}} \def\club{CLUB}
  \def\w{\omega} \def\exist{\exists} \def\se{{\germ se}} \def\bb{{\bf b}}
  \def\equivalence{\equiv} \let\lt< \let\gt>
\providecommand{\bysame}{\leavevmode\hbox to3em{\hrulefill}\thinspace}
\providecommand{\MR}{\relax\ifhmode\unskip\space\fi MR }
\providecommand{\MRhref}[2]{%
  \href{http://www.ams.org/mathscinet-getitem?mr=#1}{#2}
}
\providecommand{\href}[2]{#2}


\end{document}